\theoremstyle{plain}
\newtheorem{thm}{\protect\theoremname}
  \theoremstyle{plain}
  \newtheorem{assumption}{\protect\assumptionname}
  \theoremstyle{remark}
  \newtheorem{rem}{\protect\remarkname}
  \theoremstyle{plain}
  \newtheorem{prop}[thm]{\protect\propositionname}
\theoremstyle{plain}
  \newtheorem{lem}[thm]{\protect\lemmaname}
  \newtheorem{example}[thm]{Example}
  \providecommand{\assumptionname}{Assumption}
  \providecommand{\lemmaname}{Lemma}
  \providecommand{\remarkname}{Remark}
\providecommand{\theoremname}{Theorem}
\providecommand{\propositionname}{Proposition}
\def\R{\mathbb{R}}
\def \N{\mathbb{N}}
\def\C{\mathcal{C}}
\def\F{\mathcal{F}}
\def\G{\mathcal {G}}
\def\A{\mathcal {A}}
\def\J{\mathcal {J}}
\def\NN{\mathcal {N}}
\def\L{\mathcal{L}}
\def\LL{\mathbb{L}}
\def\w{\omega}
\def\E{E}
\def\EE{\mathbf {E}} 
\def\1{\mathds{1}} 
\def\t{\theta}
\newcommand{\ts}{{\theta^{\star}}}
\newcommand{\htet}{\bar \theta_t}
\def \eps{\varepsilon}
\def\to{\rightarrow}
\def\Xs{X_s}
\definecolor{lilas}{RGB}{182, 102, 210}
\newcommand{\bac}{\color{cyan}} 
\newcommand{\argmax}{\mathop{\rm argmax}}
\def\dd{\mathrm{d}} 
\def\to{\rightarrow}
\begin{document}

\title{Jump filtering and efficient drift estimation for L\'evy-driven SDE's}

\author{Arnaud Gloter, Dasha Loukianova and Hilmar Mai}

\address{Université d'Evry Val d'Essonne \\
91037 Évry Cedex \\
France}

\email{arnaud.gloter@univ-evry.fr}

\address{Université d'Evry Val d'Essonne \\
91037 Évry Cedex \\
France}

\email{dasha.loukianova@maths.univ-evry.fr}

\address{Centre de Recherche en Economie et Statistique\\
ENSAE-ParisTech\\
92245 Malakoff\\
France}

\email{hilmar.mai@ensae.fr}

\begin{abstract}
The problem of drift estimation for the
solution $X$ of a stochastic differential equation with L\'evy-type jumps
is considered under discrete high-frequency observations with a growing observation window.
An efficient and asymptotically normal estimator for the drift parameter is constructed under minimal conditions on the jump behavior and the sampling scheme. In the case of a bounded jump measure density these conditions reduce to
$n\Delta_n^{3-\eps}\to 0,$ where $n$ is the number of observations and $\Delta_n$ is the maximal sampling step. This result relaxes the condition $n\Delta_n^2 \to 0$ usually required for joint estimation of drift and diffusion coefficient for SDE's with jumps. The main challenge in this estimation problem stems from the  appearance of the unobserved continuous part $X^c$ in the likelihood function. 
In order to construct the drift estimator we recover this continuous 
part from discrete observations. More precisely, we estimate, in a nonparametric way,
stochastic integrals with respect to $X^c$.
Convergence results of independent interest are proved for these nonparametric estimators. 
Finally, we illustrate the behavior of our drift estimator  for a number of popular L\'evy--driven models from finance.
\end{abstract}

\keywords{L\'evy-driven SDE, efficient drift estimation, maximum likelihood estimation, high frequency data, ergodic properties}

\maketitle

\section{Introduction}

The class of solutions of L\'evy-driven stochastic differential equations (SDE's) has recently attracted a lot of attention in the literature due to its many applications in various area such as  finance, physics and neuroscience. Indeed, it includes important examples taken from finance such as the well-known Barndorff-Nielsen-Shephard model, the Kou model and the Merton model (cf. \cite{BNS01}, \cite{kou} and \cite{merton1976}) as well as the stochastic Morris-Lecar neuron model (cf. for example \cite{dittlevsen2013}) from neuroscience to name just a few. Consequently, statistical inference for these models has recently become an active domain of research.  

In this work we aim at estimating the unknown drift parameter $\theta \in \Theta\subset \R^d$
based on discrete observations $X^{\t}_{t_{0}},\ldots,X^{\t}_{t_{n}}$ of the process $X^{\t}$  given by

\begin{equation}\label{eq:Xsde}
X_{t}^{\theta}=X_{0}^{\t}+\int_{0}^{t}b(\theta,X_{s}^{\theta})\; ds+\int_{0}^{t}\sigma(X_{s}^{\theta})\; dW_{s}+\int_{0}^{t}\gamma(X_{s-}^{\theta})\; dL_{s},\quad t\in\mathbb{R}_{+},
\end{equation}
where
   $W=(W_{t})_{t\geq0}$ is a one-dimensional Brownian motion and $L$  a pure jump Lévy
process
 with Lévy measure $\nu$.

We consider here the setting of high frequency observations with a growing time window, i.e. for the discrete sample $X^{\t}_{t_{0}},\ldots,X^{\t}_{t_{n}}$ with $0 \leq t_0 \leq \ldots \leq t_n$ we assume that the sampling step $\Delta_n := max \{ t_i - t_{i-1} : 1 \leq i \leq n\}$ tends to $0$ and $t_n \to \infty $ as $n \to \infty$. 
  It is well known that due to the presence of the diffusion part, one can only estimate the drift consistently   if $t_n\to\infty.$ A crucial point for applications in the high frequency setting is to impose minimal conditions on the sampling step size $\Delta_n$. This will be one of our main objectives in this paper. 

The topic of high frequency  estimation for discretely observed diffusions without jumps is well developed by now. See for example  \cite{yoshida1992}, \cite{kessler1997} and references therein for joint estimation of drift and diffusion coefficient.
 Less  results are known when a jump component is added to the process.  
 In the case of  high frequency  estimation for  diffusion with an additional jump component \cite{masuda2013} investigates Gaussian quasi-likelihood estimators  of a joint drift-diffusion-jump part parameter. \cite {shimizu2006}
define a contrast-type  estimation function, for joint estimation of  drift, diffusion and jump parts when the jumps are of compound Poisson type. \cite {shim} generalizes these results to include more general driving L\'evy processes.  
The LAN property for drift and diffusion parameters is studied in \cite{etEla} via Malliavin calculus techniques.
In all  these  papers joint estimation is considered under conditions on the sampling scheme and the L\'evy measure, which, in the case of a bounded jump measure density, is at best $n\Delta_n^2\to 0$.

It is important to note here that the principles of the estimation of  the drift, diffusion or jump law parameters are of completely different nature. The estimation of the volatility is feasible on a compact interval, whereas the estimation of the drift and the jump law requires a growing time window. Also due to the Poisson structure of the jump part the estimation of the jump parameter can be well separated from those of the drift and the diffusion part.
In this work we focus therefore on the estimation of the drift parameter only and construct a consistent, asymptotically normal and efficient estimator, under conditions on the jump behavior and the sampling scheme, which, in the case of  bounded jump measure density reduce to $n\Delta_n^{3-\eps}\to 0.$ 

A natural approach to estimate the unknown drift parameter would
be to use a maximum likelihood estimation, but the likelihood function based on the discrete sample is not tractable in this setting, since
it depends on the transition densities of $X$ which are not explicitly known. On the contrary, the continuous-time likelihood function is explicit.
 Our aim is to approximate this function from discrete data and hence define some contrast function. The main difficulty is that the continuous-time likelihood  involves the continuous  part $X^c$ of $X$ that is unobservable under discrete sampling.
 Intuitively, this tells us that the continuous part $X^c$ has to be recovered, hence the jumps of $X$ have to be removed 
 in order to obtain an approximation of the continuous likelihood function.

 The  question of estimation of the continuous part of an Itô-semimartingale appears naturally in many statistical inference questions (cf. for example \cite{Mancini2011} and \cite{bibinger2015}) and constitutes in itself an interesting nonparametric problem. 
 In this article we  study the question of estimation of  stochastic integrals with respect to the continuous part of $X$ from a discrete sample of $X$. Propositions \ref{prop:jumpfiltering} and \ref{prop:jumpfiltering_infinit}  give explicit rates of convergence for our estimators of these quantities. Besides being of independent interest these results constitute the main tool for  the asymptotic analysis of our drift estimators.

The technique
  we use in order to recover stochastic integrals with respect to the continuous part of $X$  consists in comparing  the  increments of $X$  with a threshold $v_n$, suggested by the typical behavior of a diffusion path. This approach will be called
  jump filtering in the sequel.   
  Similar ideas of thresholding  were also used in \cite{shimizu2006}, \citet{Mancini2011}, \citet{mai2014} and \cite{bibinger2015}. In this article we have paid particular attention to the study of the joint law of the biggest jump and of the total contribution of the other jumps in each sampling interval (Lemma \ref{L:law_sum_cond_big}),  which permits us to improve existing conditions on the sampling scheme in the drift estimation problem.

The drift estimator is then constructed by applying a jump filter to the discretized likelihood function and maximizing the resulting criterion function to obtain what will be called the filtered MLE (FMLE). To study the properties of the FMLE
we first focus on the MLE obtained from continuous observations and show that 
this  MLE is asymptotically normal (Theorem \ref{thm:cont_clt}) with explicit asymptotic variance. We then  prove the LAN property which gives by Hàjek-Le Cam's convolution theorem that the continuous MLE is efficient (Theorem \ref{thm:LAN}). 
We show in the next step that the FMLE attains asymptotically the same distribution as the MLE based on continuous observations, which proves the efficiency of the FMLE (Theorems \ref{thm:anfinact}, \ref{thm:aninfinact}). The last step is mainly based on our results for the jump filter (Propositions \ref{prop:jumpfiltering} and \ref{prop:jumpfiltering_infinit}).

 The consistency of the FMLE is obtained without further assumptions on the sampling scheme.  The asymptotic normality necessitates some additional conditions on the rate at which $\Delta_n$ goes to $0$ that depend on the behavior of the L\'evy measure $\nu$
near zero. In the case where $\nu$ has a bounded Lebesgue density these conditions reduce to $n\Delta_n^{3-\eps}\to 0$ for some $\eps>0.$  We believe that this condition is unavoidable, because it is already necessary  in the Euler discretization scheme of the stochastic integral with respect to $X^c$ (Lemma \ref{lem:euler}). It is in accordance with the condition $n\Delta_n^3\to 0$ of \cite {Flo-Zim} in the case of drift estimation for continuous diffusions, hence our result can be seen as a generalization of \cite {Flo-Zim} to the presence of jumps.

In the literature on joint estimation of drift and diffusion parameters for models with diffusion and jump part the condition $n\Delta_n^{2}\to 0$ is usually required (cf. \cite{masuda2013},
\cite{shimizu2006} and \cite {shim}). The same condition on the sampling scheme appears for joint estimation in the case of continuous diffusions in \cite{yoshida1992}. Hence, our work shows that by focusing on drift estimation the condition $n\Delta_n^2\to 0$  can  be relaxed  in the presence of jumps as well.
    


%

 As will be seen in Section \ref{sec:applications} many popular models lead to explicit estimators, which do not require the knowledge of the diffusion coefficient and that perform well in numerical examples.

The structure of the paper is as follows. In Section \ref{sec:assumptions} the problem setting and the main assumptions of this work are introduced. Section \ref{sec:main} contains the construction of the drift estimator from discrete observations together with the main results. In Section \ref{sec:jumpfilter} we discuss the approximation of the continuous martingale part and prove the convergence of the jump filter. Section \ref{sec:applications} is devoted to applications to popular parametric jump diffusion models and some numerical examples. Finally, in Section \ref{sec:mainproofs} and \ref{sec:proof_filtering} we prove the main results and the convergence of the jump filter respectively, and Section \ref{sec:auxlemmas} contains some auxiliary results that are frequently used in the sequel.

\section{ Model, assumptions and ergodicity } \label{sec:assumptions}
 Let  $\Theta$ be a compact subset of $\R^d$ and $X^{\t}$ a solution to \eqref{eq:Xsde} which can be rewritten as
\begin{equation*}
X_{t}^{\theta}=X_{0}^{\t}+\int_{0}^{t}b(\theta,X_{s}^{\theta})\; ds+\int_{0}^{t}\sigma(X_{s}^{\theta})\; dW_{s}+\int_{0}^{t}\int_{\R}\gamma(X_{s-}^{\theta})z\mu(ds,dz),\quad t\in\mathbb{R}_{+}, 
\end{equation*}
where
   $W=(W_{t})_{t\geq0}$ is a one-dimensional Brownian motion and $\mu$ is the Poisson random measure on $[0,\infty)\times \R$ associated with the jumps of the L\'evy process $L=(L_t)_{t\geq 0}$ 
 with Lévy-Khintchine triplet $(0,0,\nu)$ such that $\int_{\R}|z|d\nu(z)<\infty.$ The initial condition $ X^{\t}_0
$, $W$ and $L$ are independent. We assume without
loss of generality that $0\in\Theta$ and $b(0,\cdot)\equiv0.$ 
\subsection{Assumptions}
We suppose that the functions $b:\Theta\times\R\to\R$, $\sigma:\R\to\R$ and $\gamma:\R\to\R$ satisfy the following assumptions:

\begin{assumption}[]\label{ass:existence}
The functions $\sigma(x), \gamma(x)$ and
for all $\t\in\Theta,$ $b(\t,x)$ are globally Lipschitz.
 Moreover, the Lipschitz constant of $b$ is uniformly bounded on $\Theta$.

\end{assumption}
 Under Assumption \ref{ass:existence} equation \eqref{eq:Xsde} admits a unique non-explosive càdlàg adapted solution possessing the strong Markov property, cf. \cite{applebaum}(Theorems 6.2.9. and 6.4.6). 
\begin{assumption}
\label{ass:recurrence}
 For all $\t\in\Theta$ there exists a constant $t>0,$ such that  $X^{\t}_{t} $ admits a density $p^{\t}_{t}(x,y)$ with respect to the Lebesgue measure on $\R$; bounded in $y\in\R$ and in $x\in K$ for every compact $K\subset \R$. Moreover, for every $x\in\R,$ and every open ball $U\in\R$ there exists a point $z=z(x,U)\in \mbox{supp}(\nu)$ such that $\gamma(x)z\in U.$
\end{assumption}
The last Assumption was used in \cite{Mas} to prove the irreducibility of the process $X^{\t}$. See also \cite{MasudaE} for other sets of conditions, sufficient for irreducibility.

\begin{assumption}[Ergodicity]
\label{ass:ergodic}
\begin{description}
\item [(i)] For all $q>0$,  $\int_{|z|>1}|z|^{q}\nu(dz)<\infty$.
\item [(ii)]For all $\t\in\Theta$ there exists a constant $C>0$ such that $xb(\t,x)\leq -C |x|^2,$ if $|x|\to\infty$.
\item [(iii)] $|\gamma(x)|/|x|\to 0$ as $|x|\to\infty.$ 
\item [(iv)] $|\sigma(x)|/|x|\to 0$ as $|x|\to\infty.$
\item[(v)] $\forall \t\in\Theta,$ $\forall q>0$ we have $\E|X_0^{\t}|^q<\infty.$
\end{description}
\end{assumption}
Assumption \ref{ass:recurrence} ensures together with Assumption \ref {ass:ergodic} the existence of unique invariant distribution $\pi^{\t}$, as well as the ergodicity of the process $X^{\t}$, as  stated in Lemma \ref{lem:ergodic} below. 

\begin{assumption}[Jumps]\label{ass:jumps}
\begin{description}
	\item[(i)] The jump coefficient $\gamma$ is bounded from below, i.e.
	$ \inf_{x\in\R}|\gamma(x)|:=\gamma_{min}>0$
	(wlog we suppose  $\gamma_{min}\geq 1$).
	
\end{description}

\begin{description}
	\item[(ii)] the L\'evy measure $\nu$ satisfies
	$\int_{0<|z|\leq 1}|z|\nu(dz)<\infty$,
	\item[(iii)] the L\'evy measure $\nu$  is absolutely continuous with respect to the Lebesgue measure,
	\item[(iv)] the jump coefficient $\gamma$ is upper bounded, i.e.
	$ \sup_{x\in\R}|\gamma(x)|:=\gamma_{max}<\infty$.
\end{description}	

 \end{assumption}
Note that the integrability condition given by the Assumption \ref{ass:jumps} (ii) is automatically satisfied  in the finite activity case $\nu(\R)<\infty$. This condition insures that the trajectories of the driving L\'evy process $L$  are a.s. of finite variation and hence the integral with respect to  $L$ in \eqref{eq:Xsde} can be defined as a deterministic Lebesgue-Stieltjes integral.  The third and  the fourth point of the Assumption \ref{ass:jumps} are technical and need in the infinite activity case.


The following assumption insures the existence of the likelihood function.
\begin{assumption}[Non-degeneracy]\label{ass:girsanov}
There exists some $\alpha >0,$ such that $\sigma^2(x)\geq \alpha$ for all $x\in\R.$

\end{assumption}

 \begin{assumption}[Identifiability]\label{ass:ident}
For all $\t\neq \t',$ $(\t,\t')\in\Theta^2,$
\begin{equation*}
\int_{\R}\frac{(b(\t,x)-b(\t',x))^2}{\sigma^2(x)}d\pi^{\t}(x)>0
\end{equation*}
 \end {assumption}
 We can see (cf. Proposition \ref {prop:ident}) that this last assumption is equivalent to 
 \begin{equation}\label{eq:idb}
 \forall \t\neq \t',\quad(\t,\t')\in\Theta^2,\quad b(\t,.)\neq b(\t',.).
 \end{equation}
 For $f:\Theta\to\R$ denote by
  $\nabla_{\t}f:\Theta\to\R^d$ the gradient column vector  and by $ \partial^2_{\t}f:=\left(\partial_{\theta_{i},\theta_{j}}^2f\right)_{1\leq i,j\leq d}$ the Hessian matrix of $f$. 
  We define $|\t|$ as the Euclidian norm of $\t\in\R^d$, and $|\partial_{\theta}^2f|:=\sqrt{\sum_{i,j=1}^n|\partial_{\theta_{i},\theta_{j}}^2f|^2}$ as
the Euclidian norm of the Hessian matrix of $f$.
The following assumption is used to insure the uniform in $\theta$ convergence needed in the proofs of  consistency and asymptotic normality:
 \begin{assumption}[Hölder-continuity of drift]\label{ass:hoelder}
 \begin{description}
 \item[(i)]
  For all $x\in\R$,  $b(.,x)$  is Hölder-continuous with respect to $\t\in\Theta$:
 \[\forall \theta, \theta',\quad |b(\theta,x)-b(\theta',x)|\leq K(x)|\t-\t'|^{\kappa},\]
 where $0<\kappa \leq 1$ and $K:\R\to\R_+$ is at most of polynomial growth.
 \item[(ii)]
   For all $x\in\R,$ $b(.,x)$ is twice continuously differentiable with respect  to $\theta$ and $\nabla b(.,x)$ and $\partial^2b(.,x)$ are Hölder-continuous with respect to $\t\in\Theta:$ 
  \begin{align*}
& \forall \theta, \theta',\quad |\nabla b(\theta,x)-\nabla b(\theta',x)|\leq K_1(x)|\t-\t'|^{\kappa_1} \\
& \forall \theta, \theta',\quad |\partial^2_{\theta}b(\theta,x)-\partial^2_{\theta} b(\theta',x)|\leq K_2(x)|\t-\t'|^{\kappa_2} \\
\end{align*}
where  $0<\kappa_1, \kappa_2\leq 1$ and $ K_1, K_2:\R\to\R_+$ are at most of polynomial growth. 
\end{description}
\end{assumption}
We also need the following technical assumption:
\begin{assumption}\label{ass:subpolynom}
The functions $b,\sigma, \nabla_{\t} b,\partial_{\t}^2 b$ are twice continuously differentiable with respect to $x$. The functions
$  \sigma',\ \sigma''$ as well as the functions
\[x\mapsto \sup_{\t\in\Theta}|\frac{\partial^{i+j}b(\t,x)}{\partial^{i}x\partial^j\t}|\]
are sub-polynomial for all $0\leq i\leq 2$ and $0\leq j\leq 2.$
\end{assumption}
Define
the asymptotic Fisher information 
by
\begin{equation}\label{eq:fisher}
I(\t)=\left (\int_{\mathbb{R}}\frac{\partial_{\t_i}b(\theta,x)\partial_{\t_j}b(\theta,x)}{\sigma^2(x)}\pi^{\t}(dx)\right)_{1\leq i,j\leq d}.
\end{equation}

\begin{assumption}\label{ass:fischer} For all $\t\in\Theta,$
$I(\t)$ is non-degenerated.
\end{assumption}

\subsection {Ergodic properties of solutions}
In all our statistical analysis an important role is played by ergodic properties of solutions of equation \eqref{eq:Xsde}. The following lemma is a generalization of a result of \cite{Mas}. It states conditions for the existence of an invariant measure $\pi^{\t}$ such that an ergodic theorem holds and moments of all order exist. 
A proof is given in Section \ref{sec:auxlemmas}.
\begin{lem} \label{lem:ergodic} Under assumptions \eqref{ass:existence} to \eqref{ass:jumps},  for all $\t\in\Theta,$ $X^{\theta}$ admits a unique invariant distribution $\pi^{\theta}$ and the ergodic theorem holds:
\begin{enumerate}
\item
 for every measurable function $g:\R\to\R$ satisfying $\pi^{\t}(g)<\infty,$ we have $a.s.$
\[\lim_{t\to\infty}\frac 1t\int_0^tg(X_s^{\t})ds=\pi^{\t}(g).\] 
\item For all $q>0,$ $\pi^{\t}(|x|^q)<\infty$.
\item For all $q>0,$ $\sup_{t\in\R}E[|X_t^{\t}|^q]<\infty$ and $\sup_{t\in\R}E[|X_{t_-}^{\t}|^q]<\infty$. 
\item 
Moreover, 
 \[\lim_{t\to\infty}\frac 1t\int_0^tE[|X_s^{\t}|^q] ds=\pi^{\t}(|x|^q).\]
 \end{enumerate}
 
\end{lem}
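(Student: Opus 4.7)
My plan is to reduce the claim to the ergodicity criterion of \cite{Mas} by establishing a Foster--Lyapunov drift condition of the form $\mathcal{L}^\theta V_q(x) \leq -c_q V_q(x) + K_q$ for $V_q(x) = 1+|x|^q$ and every $q>0$, combined with the irreducibility provided by Assumption \ref{ass:recurrence}. Once such a condition holds together with an accessible point and a smooth density for the transition kernel (both supplied by Assumption \ref{ass:recurrence}), the Meyn--Tweedie/Masuda machinery yields a unique invariant probability measure $\pi^\theta$, exponential ergodicity, and the integrability $\pi^\theta(|x|^q)<\infty$ for every $q>0$, whence items (1) and (2) follow.

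First I would verify the Lyapunov inequality. Applying the extended generator associated to \eqref{eq:Xsde} to $V_q$ (after a standard smoothing near the origin, or equivalently by working with $(1+x^2)^{q/2}$) produces three contributions. The drift term $q\,\mathrm{sgn}(x)|x|^{q-1}b(\theta,x)$ is bounded above by $-cq|x|^q$ for $|x|$ large thanks to Assumption \ref{ass:ergodic}(ii). The diffusion term $\tfrac12 q(q-1)|x|^{q-2}\sigma^2(x)$ is $o(|x|^q)$ by Assumption \ref{ass:ergodic}(iv). For the jump term
\[
\int_{\R}\!\left[V_q(x+\gamma(x)z)-V_q(x)\right]\nu(dz),
\]
I would split $\{|z|\leq 1\}$ and $\{|z|>1\}$: on the former, a second-order Taylor expansion combined with Assumption \ref{ass:jumps}(ii) bounds the contribution; on the latter, the sublinear growth of $\gamma$ (Assumption \ref{ass:ergodic}(iii)) and the existence of polynomial moments of $\nu$ at infinity (Assumption \ref{ass:ergodic}(i)) force the contribution to be $o(|x|^q)$. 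Summing, the drift term dominates for $|x|$ large, and the Lyapunov inequality follows.

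Items (3) and (4) would then be extracted as follows. Dynkin's formula with a localizing sequence, Gronwall's lemma, and the drift inequality give $E[V_q(X_t^\theta)]\leq V_q(X_0^\theta)e^{-c_q t}+K_q/c_q$, which together with Assumption \ref{ass:ergodic}(v) proves $\sup_t E[|X_t^\theta|^q]<\infty$; the identical bound for the left limits holds because $X_{t-}^\theta=X_t^\theta$ outside a countable set of jump times and the jump sizes $\gamma(X_{t-}^\theta)\Delta L_t$ are controlled in $L^q$. For (4), applying (1) to $g(x)=|x|^q$ gives the almost sure convergence of time averages; the uniform bound of (3) for an exponent $q'>q$ yields uniform integrability of $\{|X_s^\theta|^q\}_{s\geq0}$, so Fubini and dominated convergence upgrade the almost sure statement to convergence of the time-integrated expectations.

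The main obstacle is the careful treatment of the jump term in the infinite-activity regime and, more importantly, the verification that the Lyapunov constants $c_q,K_q$ can be chosen independently of $\theta\in\Theta$: the entire statistical analysis that follows requires uniform-in-$\theta$ control over moments and ergodic averages, so one has to exploit the uniform Lipschitz bound on $b$ (Assumption \ref{ass:existence}), the compactness of $\Theta$, and the uniformity of the estimate in Assumption \ref{ass:ergodic}(ii) to guarantee that the dissipativity constant $C$ and the ``large $|x|$'' threshold do not degenerate as $\theta$ varies.
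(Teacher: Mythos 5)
Your proposal is correct and follows essentially the same route as the paper: both verify a Foster--Lyapunov drift condition $\A f\leq -c_1 f+c_2$ for a polynomial Lyapunov function (the paper bounds the jump part with a single first-order Taylor/finite-increments estimate using Assumptions \ref{ass:ergodic}(i),(iii) and \ref{ass:jumps}(ii), rather than your small/large-jump split), then invoke the irreducibility from Assumption \ref{ass:recurrence} together with Masuda's Theorems 2.1 and 2.2 to obtain the unique invariant law, the ergodic theorem, the moment bounds (Fatou for the left limits), and finally upgrade item (4) by uniform integrability exactly as you describe. The only superfluous element is your concern about $\theta$-uniformity of the Lyapunov constants: the lemma is stated for each fixed $\t$, and the subsequent statistical analysis only runs under the true law $P^{\ts}$, so no such uniformity is needed.
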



\section{ Construction of the estimator and main results}\label{sec:main}
We define a discrete approximation to the continuous
time likelihood function by employing
a jump filtering technique and hence obtain an approximate maximum
likelihood estimator.
We prove that this drift estimator
attains asymptotically the same performance as the maximum likelihood
estimator based on continuous observations under suitable assumptions on the jump behavior of the   driving L\'evy process $L$.

\subsection{Construction of the estimator}
Let $X^{\t}$ be given by \eqref{eq:Xsde}. We denote by $P^{\t}$ the law of $X^{\t}$ on the  Skorokhod
space $D[0,\infty)$ of real-valued c\`ad l\`ag functions, and $P^{\t}_t$ its restriction on $D[0,t)$.
From now on we denote the true parameter value by $\ts$, an interior point of the parameter space $\Theta$ that we want to estimate. We shorten $X$ for $X^{\ts}$ and $P,E,\pi$ for respectively $P^{\ts}, E^{\ts},\pi^{\ts}.$
Suppose  that we observe a finite sample  
\begin{equation}
X_{t_{0}},\ldots,X_{t_{n}};\quad0=t_{0}\leq t_{1}\leq\ldots\leq t_{n}\label{eq:disc_obser}.
\end{equation} 
Every observation
time point depends also on $n$, but to simplify our notation we suppress this index.
We will be working in a high-frequency setting, i.e. 
\begin{equation*}
\Delta_n:=\sup_{i=0,\dots,n-1} (t_{i+1}-t_i) \xrightarrow{n \to \infty} 0.
\end{equation*}
We assume  $\lim_{n\to\infty} t_n= \infty$ 
and  $n\Delta_n=O(t_n)$ as $n \to \infty$.
Under Assumption \ref{ass:girsanov}, $P^{\t}_t$ and $P^0_t$
are
mutually locally absolutely continuous for any $\theta \in \Theta$ (cf. for example \cite{JS}) and the likelihood function
is given by \begin{equation}
\L_{t}(\theta,X)= 
\frac{dP_{t}^{\theta}}{dP_{t}^{0}}(X)=\exp\left(\int_{0}^{t}\sigma(X_{s})^{-2}b(\theta,X_{s})\; dX_{s}^{c} -\frac{1}{2}\int_{0}^{t}\sigma(X_{s})^{-2}b(\theta,X_{s})^{2}\; ds\right).\label{eq:likelihood}
\end{equation}
 We define the log-likelihood function as 
 \begin{equation}\label{eq:loglikelihood} 
 \ell_t(\t):=\ln {\mathcal L}_{t}(\theta,X).
 \end{equation}
 
The crucial point here is the appearance of $X^{c}$ in (\ref{eq:likelihood}),
since when $X$ is observed discretely, its continuous  part  remains
unknown. To handle this problem we use  a jump filter as  described below. 

For $ g:[0,t_n]\to \R,$ set $\Delta_i^n g=g_{t_{i}}-g_{t_{i-1}},\ i=1,\ldots n.$ In particular,
 $\Delta_{i}^nX=X_{t_{i}}-X_{t_{i-1}},$ $\Delta_{i}^nX^c=X^c_{t_{i}}-X^c_{t_{i-1}}$ and $\Delta_i^n Id=t_i-t_{i-1}.$
Let $\eps\in(0,1/2)$ and denote 
\begin{equation}\label{eq:v_ni}
v_n=\Delta_{n}^{1/2-\eps},\ n \geq 1.
\end{equation}
Define a discrete, jump-filtered approximation $\ell_{t_{n}}^{n}$  of the $\log$-likelihood function as follows.
\begin{equation}
\ell_{t_{n}}^{n}(\theta)=\sum_{i=1}^{n}\sigma(X_{t_{i-1}})^{-2}b(\theta,X_{t_{i-1}})\Delta_{i}^nX\mathbf{1}_{|\Delta_{i}^nX|\leq v_n}-\frac{1}{2}\sum_{i=1}^{n}\sigma(X_{t_{i-1}})^{-2}b(\theta,X_{t_{i-1}})^{2}\Delta_{i}^nId.\label{eq:like_filtered}
\end{equation}
The cut-off sequence $(v_n)$ is
 chosen in order to asymptotically filter the increments of $X$ containing jumps. The increments of the continuous martingale
part are typically of the order $\Delta_{n}^{1/2}$, which leads  to the  definition \eqref{eq:v_ni}.
The challenge now is to find suitable conditions on $\Delta_n$, $\epsilon$ and $\nu$ to make the likelihood  \eqref{eq:loglikelihood} well approximated by its discretized and jump filtered counterpart \eqref{eq:like_filtered} even in the case of infinite activity.
Of course we can choose $\eps$ arbitrarily small, which is a choice we have in mind.
Finally,
we define an estimator $\hat{\theta}_{n}$ of $\ts$  as 
\begin{equation}\label{eq:FMLE}
\hat{\theta}_{n}\in\operatorname*{argmax}_{\theta \in \Theta} \ell_{t_{n}}^{n}(\theta)
\end{equation}
and in the sequel we call it the filtered MLE (FMLE).

\subsection{Main results}
The following theorem gives a general consistency result for the FMLE $\hat{\theta}_{n}$ that holds for finite and infinite activity without further assumptions on $n$, $\Delta_n$ and $v_n$.
\begin{thm}[Consistency]
\label{thm:tnconsitency}
Suppose that Assumptions \ref{ass:existence} to \ref{ass:subpolynom} hold, 
then the FMLE $\hat{\t}_n$ is consistent
 in probability:
\[\hat\t_n\stackrel{P}{\longrightarrow} 
\ts,\quad\quad n\to\infty.\]
\end{thm}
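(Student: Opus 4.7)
The strategy is the standard M-estimator argument. Define the re-centered contrast
\[
U_n(\t) := \frac{1}{t_n}\bigl(\ell_{t_n}^n(\t) - \ell_{t_n}^n(\ts)\bigr),
\]
which shares its $\argmax$ with $\ell_{t_n}^n$, and the candidate limit
\[
L(\t) := -\frac{1}{2}\int_{\R}\frac{(b(\t,x)-b(\ts,x))^2}{\sigma^2(x)}\,\pi(dx).
\]
By Assumption \ref{ass:ident} the function $L$ is continuous in $\t$ (via Assumption \ref{ass:hoelder}(i)) and has a unique, well-separated maximum at $\ts$, where it vanishes. Thus once I establish $\sup_{\t\in\Theta}|U_n(\t)-L(\t)|\to 0$ in probability, the conclusion $\hat\t_n\xrightarrow{\P}\ts$ follows from the usual argmax continuous mapping argument.

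Step 1 (pointwise convergence). Decompose $\ell_{t_n}^n(\t)=A_n(\t)-B_n(\t)$ with
\[
A_n(\t)=\sum_{i=1}^{n}\frac{b(\t,X_{t_{i-1}})}{\sigma^2(X_{t_{i-1}})}\,\Delta_i^nX\,\mathbf{1}_{|\Delta_i^n X|\le v_n},\qquad
B_n(\t)=\frac12\sum_{i=1}^{n}\frac{b(\t,X_{t_{i-1}})^2}{\sigma^2(X_{t_{i-1}})}\,\Delta_i^nId.
\]
For $B_n(\t)/t_n$ a Riemann-sum comparison (bounded by the path regularity of $s\mapsto \sigma^{-2}(X_s)b(\t,X_s)^2$, which is sub-polynomial in $|X|$ by Assumption \ref{ass:subpolynom} and Lemma \ref{lem:ergodic}) reduces matters to $\tfrac{1}{2t_n}\int_0^{t_n}\sigma^{-2}(X_s)b(\t,X_s)^2\,ds$, which converges a.s.\ to $\tfrac12\int \sigma^{-2}(x)b(\t,x)^2\,\pi(dx)$ by the ergodic theorem. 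For $A_n(\t)/t_n$ I invoke Propositions \ref{prop:jumpfiltering} and \ref{prop:jumpfiltering_infinit} to replace the jump-filtered sum by $\tfrac{1}{t_n}\int_0^{t_n}\sigma^{-2}(X_s)b(\t,X_s)\,dX_s^c$; writing $dX_s^c=b(\ts,X_s)\,ds+\sigma(X_s)\,dW_s$, the drift piece is handled by the ergodic theorem while the stochastic-integral piece vanishes by the martingale law of large numbers (its quadratic variation divided by $t_n^2$ tends to zero thanks to the uniform moment bounds of Lemma \ref{lem:ergodic}). A direct algebraic simplification of $A_n(\t)/t_n-A_n(\ts)/t_n-B_n(\t)/t_n+B_n(\ts)/t_n$ produces exactly $L(\t)$.

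Step 2 (uniformity in $\t$). Since $\Theta$ is compact and, by Assumption \ref{ass:hoelder}(i), $|b(\t,x)-b(\t',x)|\le K(x)|\t-\t'|^{\kappa}$ with $K$ of polynomial growth, the functional increments of $U_n$ on $\Theta$ are controlled by
$|\t-\t'|^{\kappa}\cdot\frac{1}{t_n}\sum_{i=1}^{n}(1+|X_{t_{i-1}}|^p)(|\Delta_i^n X|\mathbf 1_{|\Delta_i^n X|\le v_n}+\Delta_i^n Id)$
for some $p$; Lemma \ref{lem:ergodic} bounds the random factor in $L^1$. A standard finite-covering of $\Theta$ combined with the pointwise convergence of Step 1 then upgrades pointwise to uniform convergence in probability.

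Step 3 (conclusion). With uniform convergence of $U_n$ to the deterministic continuous function $L$, a well-separated maximum at $\ts$, and $\hat\t_n\in\argmax_{\t\in\Theta} U_n(\t)$, a standard argument (e.g.\ as in van der Vaart's Theorem 5.7) gives $\hat\t_n\xrightarrow{\P}\ts$. The main obstacle is the jump-filter step in the treatment of $A_n(\t)$: showing that the thresholded sum based on the observable increments $\Delta_i^n X$ approximates the unobservable integral against $dX_s^c$ uniformly in $\t$. This is precisely the content of Propositions \ref{prop:jumpfiltering} and \ref{prop:jumpfiltering_infinit}; for consistency one only needs convergence in probability, so no extra restriction on the rate of $\Delta_n\to 0$ is required beyond the assumptions already imposed.
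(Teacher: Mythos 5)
Your overall architecture is the same as the paper's: a Wald/M-estimator argument reducing consistency to uniform convergence of the normalized contrast to $\tilde\ell(\t)=-\tfrac12\pi\bigl((b(\t,\cdot)-b(\ts,\cdot))^2/\sigma^2\bigr)$, with the jump-filtering Propositions \ref{prop:jumpfiltering}/\ref{prop:jumpfiltering_infinit} and a Riemann-sum lemma doing the work of replacing the discretized, filtered likelihood by its continuous-time counterpart, and the ergodic theorem plus a martingale LLN identifying the limit. Step 1 and Step 3 are fine.

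The genuine gap is in Step 2. You claim the $\theta$-increments of $U_n$ are controlled by
$|\t-\t'|^{\kappa}\cdot\frac{1}{t_n}\sum_{i=1}^{n}(1+|X_{t_{i-1}}|^p)\bigl(|\Delta_i^n X|\mathbf 1_{|\Delta_i^n X|\le v_n}+\Delta_i^n Id\bigr)$
and that Lemma \ref{lem:ergodic} bounds the random factor in $L^1$. It does not: since $X$ has a nondegenerate Brownian component (Assumption \ref{ass:girsanov}), $E|\Delta_i^n X|\asymp \Delta_n^{1/2}$, so
$\frac{1}{t_n}\sum_{i=1}^{n}E\bigl[(1+|X_{t_{i-1}}|^p)|\Delta_i^n X|\mathbf 1_{|\Delta_i^n X|\le v_n}\bigr]\asymp \frac{n\Delta_n^{1/2}}{n\Delta_n}=\Delta_n^{-1/2}\to\infty$
(this is just the divergence of the discretized total variation of a diffusion path; bounding the increment by $v_n$ instead gives $v_n/\Delta_n=\Delta_n^{-1/2-\eps}$, which is no better). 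So your modulus of continuity is not $O_P(1)$ and the covering argument does not go through as stated. The repair is exactly what the paper does: note that statement (i) of Propositions \ref{prop:jumpfiltering}, \ref{prop:jumpfiltering_infinit} and Lemma \ref{lem:Riemann_app}(i) are already \emph{uniform} in $\theta$, so they yield $\sup_{\t}|t_n^{-1}\ell^n_{t_n}(\t)-t_n^{-1}\ell_{t_n}(\t)|\to 0$ directly, with no equicontinuity of the discretized sums needed; the remaining uniformity is then for the continuous-time criterion $t_n^{-1}\ell_{t_n}(\t)$, where the $ds$-integrals are equicontinuous with modulus $|\t-\t'|^{\kappa}\cdot t_n^{-1}\int_0^{t_n}K^2(X_s)\,ds=O_P(1)$ by the ergodic theorem, and the stochastic-integral term cannot be handled by a pathwise H\"older bound either but requires the uniform law of large numbers for local martingales (as in the proof of Theorem \ref{th: FMLE_consist}, via \cite{LL}) or a Kolmogorov-criterion argument as in the proof of Lemma \ref{lem:euler}(i).
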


To obtain a central limit theorem for the estimation error we consider finite and infinite activity separately, since we obtain different conditions on the relation of $n$, $\Delta_n$ and the cut-off sequence $v_n$.

\begin{thm}[Asymptotic normality: finite activity]\label{thm:anfinact}
Assume that the L\'evy process $L$ has a finite jump activity : $\nu(\mathbb{R})<\infty$. Suppose that Assumptions \ref{ass:existence} to \ref{ass:ergodic}, \ref{ass:jumps}(i) and \ref{ass:ident} to \ref{ass:fischer} hold.

If  $n\Delta_n^{ 3-\eps}\to 0,$ $\sqrt n\Delta_n^{1-\eps/2}\left(\int_{|z|\leq 2v_n}\nu(dz)\right)^{1-\eps/2}\to 0$ and $\sqrt n \Delta_n^{1/2}\int_{|z|< {2v_n}}|z|\nu(dz)\to 0$ as $n\to\infty,$  then we conclude that
the FMLE $\hat{\t}_n$ is asymptotically normal: 
\[
t_{n}^{1/2}(\hat{\theta}_{n}-\ts)\stackrel{\L}{\to}N(0,I^{-1}(\ts)),\quad n\to\infty,
\]
 where $I$ is the Fisher information given by \eqref{eq:fisher}.

Furthermore, the FMLE $\hat \theta_n$ is asymptotically efficient in the sense of the Hàjek-Le Cam convolution theorem.
\end{thm}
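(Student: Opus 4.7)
The plan is to follow the classical M-estimator route: Taylor-expand the gradient of the filtered log-likelihood around $\ts$, reduce the problem to (i) a central limit theorem for the score at $\ts$ and (ii) a law of large numbers for the Hessian, and then transfer these two statements from the continuous-time likelihood to its jump-filtered discretization using the jump-filtering propositions together with the ergodicity lemma.

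\smallskip
\emph{Step 1: localisation.} By Theorem \ref{thm:tnconsitency}, $\hat\theta_n\to \ts$ in probability, and $\ts$ lies in the interior of $\Theta$. Hence with probability tending to one $\nabla_\theta \ell^n_{t_n}(\hat\theta_n)=0$, and a first-order Taylor expansion gives
\begin{equation*}
0=\nabla_\theta \ell^n_{t_n}(\ts)+\Bigl(\int_0^1 \partial^2_\theta \ell^n_{t_n}\bigl(\ts+u(\hat\theta_n-\ts)\bigr)\,du\Bigr)(\hat\theta_n-\ts).
\end{equation*}
Writing $\bar H_n$ for the integrated Hessian above, I get
\begin{equation*}
\sqrt{t_n}(\hat\theta_n-\ts)=-\bigl(t_n^{-1}\bar H_n\bigr)^{-1}\, t_n^{-1/2}\nabla_\theta \ell^n_{t_n}(\ts),
\end{equation*}
so asymptotic normality reduces to the two claims
\begin{equation*}
t_n^{-1/2}\nabla_\theta \ell^n_{t_n}(\ts)\stackrel{\L}{\to}\NN(0,I(\ts)),\qquad t_n^{-1}\bar H_n\stackrel{\PP}{\to}-I(\ts).
\end{equation*}

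\smallskip
\emph{Step 2: CLT for the score.} Split the filtered score as
\begin{equation*}
\nabla_\theta \ell^n_{t_n}(\ts)=\nabla_\theta \ell_{t_n}(\ts)+R_n,
\end{equation*}
where $\nabla_\theta \ell_{t_n}(\ts)=\int_0^{t_n}\sigma(X_s)^{-1}\nabla_\theta b(\ts,X_s)\,dW_s$ is the continuous-time score (after using $dX_s^c=b(\ts,X_s)ds+\sigma(X_s)dW_s$). Theorem \ref{thm:cont_clt} (the continuous CLT already established in the paper) gives $t_n^{-1/2}\nabla_\theta \ell_{t_n}(\ts)\stackrel{\L}{\to}\NN(0,I(\ts))$. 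The remainder $R_n$ splits further into
(a) the Riemann-sum error
$\sum_i \sigma^{-2}(X_{t_{i-1}})\,b\,\nabla_\theta b(\ts,X_{t_{i-1}})\Delta_i^n Id-\int_0^{t_n}\sigma^{-2}\,b\,\nabla_\theta b(\ts,X_s)\,ds$,
which is $o_P(\sqrt{t_n})$ by Lemma \ref{lem:ergodic} and the sub-polynomial/Lipschitz hypotheses of Assumptions \ref{ass:existence}, \ref{ass:subpolynom} (with a standard Itô expansion of the summand on each $[t_{i-1},t_i]$ producing the $n\Delta_n^{3-\eps}\to 0$ condition, cf.\ the Euler lemma \ref{lem:euler}); and
(b) the filtered Itô-sum error
$\sum_i \sigma^{-2}(X_{t_{i-1}})\nabla_\theta b(\ts,X_{t_{i-1}})\Delta_i^n X\,\1_{|\Delta_i^n X|\le v_n}-\int_0^{t_n}\sigma^{-2}\nabla_\theta b(\ts,X_s)\,dX_s^c$,
which is exactly the object controlled by Proposition \ref{prop:jumpfiltering}. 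The two conditions $\sqrt{n}\Delta_n^{1-\eps/2}\bigl(\int_{|z|\le 2v_n}\nu(dz)\bigr)^{1-\eps/2}\to 0$ and $\sqrt{n}\Delta_n^{1/2}\int_{|z|<2v_n}|z|\nu(dz)\to 0$ are precisely the thresholds under which Proposition \ref{prop:jumpfiltering} yields $R_n=o_P(\sqrt{t_n})$ in the finite-activity case. Slutsky then gives the CLT for the whole score.

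\smallskip
\emph{Step 3: Hessian and efficiency.} The second derivative is
\begin{equation*}
\partial^2_\theta \ell^n_{t_n}(\theta)=\sum_i \sigma^{-2}(X_{t_{i-1}})\partial^2_\theta b(\theta,X_{t_{i-1}})\Delta_i^n X\,\1_{|\Delta_i^n X|\le v_n}-\sum_i \sigma^{-2}\bigl[\partial^2_\theta b\cdot b+\nabla_\theta b\,\nabla_\theta b^\top\bigr](\theta,X_{t_{i-1}})\Delta_i^n Id.
\end{equation*}
Using the Hölder continuity in $\theta$ of $\partial^2_\theta b$ (Assumption \ref{ass:hoelder}(ii)), the consistency of $\hat\theta_n$ gives a uniform-over-a-shrinking-neighbourhood reduction to $\theta=\ts$. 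For the first sum, Proposition \ref{prop:jumpfiltering} again yields convergence (after dividing by $t_n$) to $\int \sigma^{-2}\partial^2_\theta b(\ts,x)\cdot b(\ts,x)\,\pi^{\ts}(dx)$, while the second sum converges by Lemma \ref{lem:ergodic} to $-\int \sigma^{-2}[\partial^2_\theta b\cdot b+\nabla_\theta b\nabla_\theta b^\top](\ts,x)\,\pi^{\ts}(dx)$. The $b\cdot\partial^2_\theta b$ terms cancel, leaving $t_n^{-1}\bar H_n\to -I(\ts)$, which is invertible by Assumption \ref{ass:fischer}.

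\smallskip
\emph{Step 4: efficiency.} Combining steps~1--3 via Slutsky yields the stated CLT. Efficiency follows from the LAN property established in Theorem \ref{thm:LAN} and the Hájek--Le Cam convolution theorem, since the limit distribution of $\sqrt{t_n}(\hat\theta_n-\ts)$ matches the lower bound $\NN(0,I^{-1}(\ts))$.

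\smallskip
\emph{Expected main obstacle.} The delicate point is bounding the filtered Itô-sum error (b) in Step 2 \emph{sharply enough} to absorb a $\sqrt{t_n}$-normalisation; a crude bound typically forces $n\Delta_n^2\to 0$. Controlling it under the weaker $n\Delta_n^{3-\eps}\to 0$ requires the fine joint-law analysis of the dominant jump and the residual jumps on each interval (Lemma \ref{L:law_sum_cond_big}), which is where the two $\nu$-dependent conditions in the hypotheses enter.
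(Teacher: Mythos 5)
Your proposal is correct and follows essentially the same route as the paper: a Taylor expansion of the filtered score around $\ts$, a CLT for the score obtained by adding the continuous-time martingale CLT to the jump-filtering error (Proposition \ref{prop:jumpfiltering}) and the Riemann-sum error (Lemma \ref{lem:Riemann_app}), convergence of the integrated Hessian to $-I(\ts)$ via the same approximation results plus the ergodic theorem and consistency, and efficiency via the LAN property of Theorem \ref{thm:LAN}. The only cosmetic differences are that the paper cites Lemma \ref{lem:Riemann_app} rather than Lemma \ref{lem:euler} for the Riemann-sum term and passes through the continuous-time Hessian limit \eqref{eq:cv_intlpp} instead of computing the limit of $t_n^{-1}\partial^2_\theta\ell^n_{t_n}$ directly, but these are equivalent.
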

\begin{rem}If $\nu$ has a bounded Lebesgue density, the conditions of the Theorem \ref{thm:anfinact} on the sampling scheme and the jump behavior reduce to $n\Delta_n^{3-4\eps}\to 0.$
\end{rem}
The following theorem generalizes the results of Theorem \ref{thm:anfinact} to driving L\'evy processes of infinite activity.

\begin{thm}[Asymptotic normality: general case]\label{thm:aninfinact}
Assume that the L\'evy process $L$ has infinite jump activity : $\nu(\mathbb{R})=\infty$.
Suppose Assumptions \ref{ass:existence} to \ref{ass:fischer} hold.
 If
$n\Delta_n^{3-\eps}\to 0,$ 
\[
\sqrt{n\Delta_{n}} \left(\int_{|z|\leq 3v_n/\gamma_{min}}|z|\nu(dz)\right )^{1-\eps/2}\to 0 \quad \text{and} \quad \sqrt n\Delta_n^{3/2-2\eps} \left(\int_{|z|\geq  3v_n/\gamma_{min}}\nu(dz)\right)^{1-\eps/2}\to0
\] 
 as $n\to\infty,$
then
all  conclusions of Theorem \ref{thm:anfinact} hold.
\end{thm}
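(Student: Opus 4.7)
The plan is to follow the structure of the proof of Theorem \ref{thm:anfinact} and reduce the CLT for the FMLE to the CLT for the continuous-time MLE, replacing the finite-activity jump-filter bounds by those of Proposition \ref{prop:jumpfiltering_infinit}. Consistency has already been established by Theorem \ref{thm:tnconsitency} (no extra rate conditions), and the continuous MLE is known to satisfy a CLT with asymptotic variance $I^{-1}(\ts)$ (Theorem \ref{thm:cont_clt}) and to be efficient via LAN (Theorem \ref{thm:LAN}). It therefore suffices to prove that the score and Hessian of $\ell_{t_n}^n$ at $\ts$ have, after normalisation, the same limits as those of the continuous-time log-likelihood $\ell_{t_n}$.

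Starting from the first-order condition $\nabla_\theta\ell_{t_n}^n(\hat\theta_n)=0$ and a Taylor expansion around $\ts$, I would write
\begin{equation*}
t_n^{1/2}(\hat\theta_n-\ts) = \bigl(-t_n^{-1}\partial_\theta^2\ell_{t_n}^n(\bar\theta_n)\bigr)^{-1}\;t_n^{-1/2}\nabla_\theta\ell_{t_n}^n(\ts),
\end{equation*}
with $\bar\theta_n$ between $\ts$ and $\hat\theta_n$, and then establish the two approximations
\begin{equation*}
t_n^{-1/2}\bigl(\nabla_\theta\ell_{t_n}^n(\ts)-\nabla_\theta\ell_{t_n}(\ts)\bigr)\xrightarrow{\P}0,\qquad t_n^{-1}\partial_\theta^2\ell_{t_n}^n(\bar\theta_n)\xrightarrow{\P}-I(\ts).
\end{equation*}
Both differences decompose into sums of terms of the form $f(\ts,X_{t_{i-1}})[\Delta_i^n X\,\mathbf{1}_{|\Delta_i^n X|\le v_n}-\Delta_i^n X^c]$, directly controlled by Proposition \ref{prop:jumpfiltering_infinit}, plus Riemann/Euler-type errors arising when $\int_{t_{i-1}}^{t_i}f(\ts,X_s)\sigma^{-2}(X_s)\,dX_s^c$ is replaced by its left-endpoint counterpart; these are handled by Lemma \ref{lem:euler} under the sharp rate $n\Delta_n^{3-\eps}\to 0$.

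The main obstacle, absent in the finite-activity proof, lies in the behaviour of the jump filter when $\nu(\R)=\infty$. The indicator $\mathbf{1}_{|\Delta_i^n X|\le v_n}$ no longer separates purely continuous increments from jump-containing ones: each interval $[t_{i-1},t_i]$ contains infinitely many small jumps whose accumulation can push the increment past $v_n$, while conversely a large jump may occasionally leave the increment below the threshold. The strategy of Proposition \ref{prop:jumpfiltering_infinit}, built on the joint-law control of Lemma \ref{L:law_sum_cond_big}, is to condition on the largest jump in each subinterval and to treat separately the cases where its modulus lies below or above $3v_n/\gamma_{min}$. The two integral hypotheses
\begin{equation*}
\sqrt{n\Delta_n}\Bigl(\int_{|z|\le 3v_n/\gamma_{min}}|z|\nu(dz)\Bigr)^{1-\eps/2}\to 0,\qquad \sqrt n\,\Delta_n^{3/2-2\eps}\Bigl(\int_{|z|\ge 3v_n/\gamma_{min}}\nu(dz)\Bigr)^{1-\eps/2}\to 0
\end{equation*}
are exactly the rates required so that, after summation over the $n$ intervals and renormalisation by $\sqrt{t_n}\sim\sqrt{n\Delta_n}$, both the accumulated small-jump contribution and the rare misclassified large-jump contribution become $o_\P(1)$.

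Once the two approximation statements are in hand, the continuous score $t_n^{-1/2}\nabla_\theta\ell_{t_n}(\ts)$ converges in law to $N(0,I(\ts))$ by Theorem \ref{thm:cont_clt}, the normalised Hessian tends to $-I(\ts)$ in probability by ergodicity (Lemma \ref{lem:ergodic}) combined with the uniform Hölder control of Assumption \ref{ass:hoelder}, and Slutsky's lemma delivers $t_n^{1/2}(\hat\theta_n-\ts)\stackrel{\L}{\to}N(0,I^{-1}(\ts))$. Asymptotic efficiency is then inherited from the LAN property of Theorem \ref{thm:LAN} exactly as in Theorem \ref{thm:anfinact}.
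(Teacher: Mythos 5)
Your proposal is correct and follows essentially the same route as the paper: the paper's proof of Theorem \ref{thm:aninfinact} consists precisely of rerunning the proof of Theorem \ref{thm:anfinact} (Taylor expansion of the score, convergence of the normalised score and Hessian via the jump-filter and Riemann approximations, then Slutsky and LAN) with Proposition \ref{prop:jumpfiltering} replaced by Proposition \ref{prop:jumpfiltering_infinit}, whose rate hypotheses are exactly the integral conditions you identify. No gaps.
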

Theorem \ref{thm:aninfinact} applies for both finite and infinite jump activity. Besides different conditions on the sampling scheme and the behavior of $\nu$ near zero it uses that the L\'evy measure $\nu$ admits a density, which is not supposed in Theorem \ref{thm:anfinact}. In the case where $\nu$ admits a bounded Lebesgue density, all the conditions on the $\Delta_n$ and $n$  of the Theorem \ref{thm:aninfinact} became $n\Delta_n^{3-\tilde\eps}\to 0$ for some $\tilde\eps>0$  as in the Theorem \ref{thm:anfinact}. 

\begin{example} [tempered stable jumps]
To illustrate the influence of the jump behavior of $L$ on the conditions on $n$ and $\Delta_n$ given in Theorem \ref{thm:aninfinact} let us consider the example of a tempered $\alpha$-stable driving L\'evy process. Tempered stable processes have been popular in financial modeling to overcome the limitations of the classical models based on Brownian motion alone (cf. \cite{Cont}). The Lévy measure in this case has an unbounded and non-integrable density given by
\[\nu(dz)=C|z|^{-(1+\alpha)}e^{-\lambda|z|}dz\]
with $ \lambda>0$ and a normalizing constant $C>0$ that satisfies the conditions of Theorem \ref{thm:aninfinact} if $0<\alpha<1$.   

The conditions on $n$, $\Delta_n$ and $\nu$ in Theorem \ref{thm:aninfinact} can now be summarized as $n \Delta_n^{2-\alpha -\tilde \epsilon} \to 0$ for some $\epsilon >0$. We observe that a higher Blumenthal-Getoor index $\alpha$ requires a faster convergence $\Delta_n$ to zero. This is in line with the intuition that when the intensity of small jumps increases (i.e. $\alpha$ increases) more and more frequent observations are needed to have a sufficient performance of the jump filter. 
\end{example}

\section{Nonparametric estimation of $X^c$ via jump filtering. }\label{sec:jumpfilter}
The estimation problem considered in this work leads naturally to the more fundamental problem of approximation of the continuous martingale part $X^c$ from discrete observations of a jump diffusion $X$.
 In this section we prove approximation results of this sort for integral functionals with respect to $X^c$. Since we need both uniform and non-uniform versions for the drift estimation problem, both settings will be discussed.
The following proposition concerns
 the finite activity case. The cut-off sequence $v_n$ and $\eps$ were defined in \eqref{eq:v_ni}.


\begin{prop}[jump filtering: finite activity]
\label{prop:jumpfiltering}
Suppose that 
 $L$ is of finite activity and Assumptions \ref{ass:existence} to \ref{ass:jumps} hold.
 Suppose that  $f:\Theta\times \R \to \R$ satisfies:
 \begin{enumerate}
\item[a)] for all $x\in\R,$ $f(.,x)$ is Hölder continuous with respect to $\t\in\Theta:$
\[
 \forall \t, \t',
 \quad |f(\t,x)-f(\t',x)|\leq C(x)|\t-\t'|^{\kappa},\] where $0<\kappa\leq 1$  and $C:\R\to\R_+$ is at most of polynomial growth;

\item[b)] for all $ \t\in\Theta$, $f(\t,.)\in\C^2(\R)$ and $\sup_{\t\in\Theta}|f(\t,.)|$, $\sup_{\t\in\Theta}|f'_x(\t,.)|$ and $\sup_{\t\in\Theta} |f''_x(\t,.)|$ are at most of polynomial growth.

\end{enumerate} 
Then the following statements hold:

\begin{enumerate}
  
  \item [(i)] without any  assumption on the way that $\Delta_n\to 0$ as $n\to\infty,$
\begin{equation*}(n\Delta_n)^{-1}\sup_{\theta\in\Theta}\left|\int_{0}^{t_{n}}f(\theta,X_{s})\; dX_{s}^{c}-\sum_{i=1}^{n}f(\theta,X_{t_{i-1}})\Delta_{i}^nX\mathbf{1}_{|\Delta_{i}^nX|\leq v_n}\right|\stackrel{P}{\longrightarrow} 0;
\end{equation*}

\item [(ii)] if $n\Delta_n^{3-\eps}\to 0,$ $\sqrt n\Delta_n^{1-\eps/2}\left(\int_{|z|\leq 2v_n}\nu(dz)\right)^{1-\eps/2}\to 0$ and \\
$\sqrt n \Delta_n^{1/2}\int_{|z|\leq {2v_n}}|z|\nu(dz)\to 0$ as $n\to\infty,$ then for any $\t\in\Theta$,

\begin{equation}\label{eq:jumpfiltapr}
 (n\Delta_n)^{-1/2}\left|\int_{0}^{t_{n}}f(\theta,X_{s})\; dX_{s}^{c}-\sum_{i=1}^{n}f(\theta,X_{t_{i-1}})\Delta_{i}^nX\mathbf{1}_{|\Delta_{i}^nX|\leq v_n}\right|\stackrel{P}{\longrightarrow} 0.
 \end{equation}

\end{enumerate}
\end{prop}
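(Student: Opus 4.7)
The plan is to decompose the error
\[
R_n(\theta) := \int_0^{t_n} f(\theta, X_s)\, dX_s^c - \sum_{i=1}^n f(\theta, X_{t_{i-1}}) \Delta_i^n X\, \mathbf{1}_{\{|\Delta_i^n X|\leq v_n\}}
\]
into a \emph{Riemann-sum error} $A_n(\theta):=\sum_i \int_{t_{i-1}}^{t_i} [f(\theta,X_s)-f(\theta,X_{t_{i-1}})]\,dX_s^c$ and a \emph{filter error} $B_n(\theta):=\sum_i f(\theta,X_{t_{i-1}}) [\Delta_i^n X^c - \Delta_i^n X\, \mathbf{1}_{\{|\Delta_i^n X|\leq v_n\}}]$. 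Setting $J_t := \int_0^t \gamma(X_{s-})\,dL_s$ so that $\Delta_i^n X^c=\Delta_i^n X-\Delta_i^n J$, the identity
\[
\Delta_i^n X^c - \Delta_i^n X\,\mathbf{1}_{\{|\Delta_i^n X|\leq v_n\}} = -\Delta_i^n J\,\mathbf{1}_{\{|\Delta_i^n X|\leq v_n\}} + \Delta_i^n X^c\,\mathbf{1}_{\{|\Delta_i^n X|> v_n\}}
\]
further splits $B_n$ into a ``jumps that survived the filter'' term and a ``continuous increments wrongly filtered'' term, which I will treat separately.

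For $A_n(\theta)$ I would split $dX_s^c = b(\ts,X_s)\,ds + \sigma(X_s)\,dW_s$. The $dW$-part is a discrete stochastic integral whose variance, by It\^o's isometry together with hypotheses (a)--(b) and the moment bound $E|X_s-X_{t_{i-1}}|^2\leq C\Delta_n$ available from Lemma \ref{lem:ergodic} and standard SDE estimates, is of order $t_n \Delta_n$; after the $(n\Delta_n)^{-1/2}$ normalization this becomes $O_p(\Delta_n^{1/2})$. The $ds$-part is the delicate one, since a naive bound yields only $O_p(n\Delta_n^{3/2})$, which is insufficient for (ii). I would instead apply It\^o's formula to $f(\theta,\cdot)$ on each interval $(t_{i-1},t_i]$ to decompose $f(\theta,X_s)-f(\theta,X_{t_{i-1}})$ into its continuous-martingale, finite-variation and compensated-jump parts; after integration against $b(\ts,X_s)\,ds$, summation, and routine use of BDG and Fubini, the dominant contribution is a deterministic-looking sum of order $n\Delta_n^2$, which becomes $O_p(\sqrt{n\Delta_n^3})$ after normalization and vanishes under the assumption $n\Delta_n^{3-\eps}\to 0$ imposed in (ii).

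The heart of the argument is the control of $B_n$, which I would handle by partitioning each interval $(t_{i-1},t_i]$ according to the number of jumps of $L$. On intervals with no jumps, $\Delta_i^n J=0$ and only the ``wrongly filtered'' term survives; its contribution is negligible by Gaussian concentration since $v_n/\sqrt{\Delta_n}=\Delta_n^{-\eps}\to\infty$. On intervals with exactly one jump of size $|z|$, the filter fails to remove it only when $|\Delta_i^n X|\leq v_n$, which, because $\gamma_{min}\geq 1$ and $|\Delta_i^n X^c|\leq v_n$ with overwhelming probability, forces $|z|\leq 2v_n$; the aggregate bound $\sum_i |f(\theta,X_{t_{i-1}})|\,|\gamma(X_{t_{i-1}})z_i|\mathbf{1}_{\{|z_i|\leq 2v_n\}}$ is controlled in expectation by $C t_n \int_{|z|\leq 2v_n}|z|\nu(dz)$, which matches after normalization the hypothesis $\sqrt n \Delta_n^{1/2}\int_{|z|\leq 2v_n}|z|\nu(dz)\to 0$. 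On intervals containing two or more jumps, I would use Lemma \ref{L:law_sum_cond_big} to separate the biggest jump from the sum of the remaining ones and bound their joint contribution via H\"older's inequality with conjugate exponents tuned through $\eps$; the resulting estimate is absorbed by the condition $\sqrt n \Delta_n^{1-\eps/2}(\int_{|z|\leq 2v_n}\nu(dz))^{1-\eps/2}\to 0$.

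For the uniform statement (i), I would combine the H\"older continuity of $f$ in $\theta$ (hypothesis (a)) with compactness of $\Theta$ via an $\eta_n$-net argument: $\sup_\theta$ is bounded by a maximum over the net centres plus an oscillation correction whose $L^1$-norm is $O(t_n \eta_n^\kappa)$, negligible for suitable $\eta_n$. The looser rate $(n\Delta_n)^{-1}$ requires only first-moment estimates on the analogues of $A_n$ and $B_n$, so no additional rate hypothesis is needed. The principal obstacle throughout is the sharp treatment of the filter error in (ii): one must simultaneously rule out that small jumps sneak through the threshold \emph{and} that atypically large continuous increments are mistakenly discarded, and it is precisely the refined joint-law estimate for the biggest jump versus the rest of the jumps afforded by Lemma \ref{L:law_sum_cond_big} that allows the sampling condition to be weakened from the customary $n\Delta_n^2\to 0$ to $n\Delta_n^{3-\eps}\to 0$.
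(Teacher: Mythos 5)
Your overall architecture --- Euler error plus filter error, It\^o's formula on the drift Riemann sum to beat the naive $n\Delta_n^{3/2}$ rate, and a partition of the sampling intervals by the number of jumps with the threshold comparison forcing surviving jumps to satisfy $|z|\leq 2v_n$ --- is the same as the paper's (which delegates to Lemmas \ref{lem:euler} and \ref{lem:filter_error_finite}). But there is one genuine gap, and it sits exactly at the hardest point. In your decomposition of $B_n$, the term $\sum_i f(\theta,X_{t_{i-1}})\,\Delta_i^n X^c\,\mathbf{1}_{\{|\Delta_i^n X|>v_n\}}$ must also be controlled on intervals that \emph{do} contain a jump. There the event $\{|\Delta_i^n X|>v_n\}$ is not atypical at all --- it has probability of order $\Delta_n$, since a compound Poisson jump of size $O(1)$ pushes the increment past the threshold --- and $\Delta_i^n X^c$ is of its ordinary size $\sqrt{\Delta_n}$. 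A first-moment bound therefore gives only $O(n\Delta_n^{3/2-\eps/2})$ in $\LL^1$, i.e. $\sqrt{n}\,\Delta_n^{1-\eps/2}$ after the $(n\Delta_n)^{-1/2}$ normalization, which does \emph{not} vanish under $n\Delta_n^{3-\eps}\to 0$ alone (take $\Delta_n=n^{-2/5}$). Your plan only addresses the discarded continuous increments on no-jump intervals (``atypically large continuous increments mistakenly discarded''), so this term is unaccounted for. The paper handles it (the term $A_n^2$ in the proof of Lemma \ref{lem:filter_error_finite}) by a conditional-centering argument: since $W$ remains a Brownian motion in the filtration enlarged by $\sigma(L)$, one has $E[\int_{t_{i-1}}^{t_i}\sigma(X_s)dW_s\,\mathbf{1}_{(M_n^i)^c}\mid\F_{i-1}]=0$, so the conditional mean of $e_i=f(\theta,X_{t_{i-1}})\Delta_i^n X^c\mathbf{1}_{(M_n^i)^c}$ reduces to the drift contribution of order $\Delta_n^{2-\eps/2}$, and Lemma 9 of \cite{Genon1993} then yields $o_P(\sqrt{n\Delta_n})$ under $n\Delta_n^{3-\eps}\to 0$. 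This is precisely the step that lets the sampling condition be relaxed below $n\Delta_n^2\to 0$; without it your argument does not close.

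Two smaller points. Lemma \ref{L:law_sum_cond_big} is not used in the finite-activity case: the intervals with two or more jumps are disposed of by $P(\Delta_i^n N\geq 2)=O(\Delta_n^2)$ and H\"older's inequality, giving $O(n\Delta_n^{5/2-2\eps})$; the joint law of the biggest jump versus the rest enters only in the infinite-activity analogue (Lemma \ref{lem:filter_infiniteactivity}). Relatedly, the hypothesis $\sqrt{n}\Delta_n^{1-\eps/2}(\int_{|z|\leq 2v_n}\nu(dz))^{1-\eps/2}\to 0$ is consumed not by the multi-jump intervals but by the one-jump intervals with a small jump, on which the continuous increment is wrongly discarded (the paper's bound \eqref{eq:lastdisp3}). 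Your $\eta_n$-net argument for the uniformity in $\theta$ in part (i) is a workable alternative to the paper's route via the Kolmogorov-type criterion of \cite{Ibra-Khas}.
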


The case of infinite activity is treated in the following proposition.

\begin{prop}[jump filtering: infinite activity]
\label{prop:jumpfiltering_infinit}
Suppose that 
$L$ is of infinite activity and Assumptions \ref{ass:existence} to \ref{ass:jumps}  hold. 
Suppose that
$f:\Theta\times \R\to \R$ satisfies the assumptions of Proposition \ref{prop:jumpfiltering}. Then,
  \begin{enumerate}
\item [(i)] statement (i) of Proposition \ref{prop:jumpfiltering} holds;

\item[(ii)]
if
$n\Delta_n^{3-\eps}\to 0,$ 
\[
\sqrt{n\Delta_{n}} \left(\int_{|z|\leq 3v_n/\gamma_{min}}|z|\nu(dz)\right )^{1-\eps/2}\to 0 \quad and 
 \quad \sqrt n\Delta_n^{3/2-\eps} \left(\int_{|z|\geq  3v_n/\gamma_{min}}\nu(dz)\right)^{1-\eps/2}\to0
 \]
 as $n\to\infty,$
then for any $\t\in\Theta$, the convergence  \eqref{eq:jumpfiltapr} holds.

\end{enumerate}
\end{prop}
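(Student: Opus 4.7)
Write $X=X^{c}+X^{J}$ with $X^{J}_{t}=\int_{0}^{t}\int_{\R}\gamma(X_{s-})z\,\mu(ds,dz)$ and split the error as $A_{n}+B_{n}+C_{n}$, where
\begin{align*}
A_{n}&=\sum_{i=1}^{n}\int_{t_{i-1}}^{t_{i}}\bigl(f(\theta,X_{s})-f(\theta,X_{t_{i-1}})\bigr)\,dX_{s}^{c},\\
B_{n}&=\sum_{i=1}^{n}f(\theta,X_{t_{i-1}})\,\Delta_{i}^{n}X^{c}\,\mathbf{1}_{\{|\Delta_{i}^{n}X|>v_{n}\}},\\
C_{n}&=-\sum_{i=1}^{n}f(\theta,X_{t_{i-1}})\,\Delta_{i}^{n}X^{J}\,\mathbf{1}_{\{|\Delta_{i}^{n}X|\le v_{n}\}}.
\end{align*}
The Euler-type remainder $A_{n}$ is handled by Itô's formula applied to $f(\theta,\cdot)$ on each interval together with the moment bounds of Lemma \ref{lem:ergodic} and the polynomial growth in assumption (b). This is precisely the content of Lemma \ref{lem:euler}, which yields $E[|A_{n}|^{2}]=O(n\Delta_{n}^{3})$; hence $(n\Delta_{n})^{-1/2}|A_{n}|=o_{P}(1)$ under $n\Delta_{n}^{3-\eps}\to 0$, while the weaker bound $(n\Delta_{n})^{-1}|A_{n}|=o_{P}(1)$ in (i) needs no condition beyond $\Delta_{n}\to 0$.

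For $B_{n}$ and $C_{n}$ I would split the jumps of $L$ at the threshold $u_{n}:=3v_{n}/\gamma_{min}$. Since $|z|>u_{n}$ forces $|\gamma(X_{s-})z|>3v_{n}$, a big jump in $L$ on $[t_{i-1},t_{i}]$ typically causes $|\Delta_{i}^{n}X|>v_{n}$, whereas on an interval with no big jump the event $\{|\Delta_{i}^{n}X|>v_{n}\}$ is rare: the Brownian piece is $O_{P}(\Delta_{n}^{1/2})$ and the accumulated small jumps have expected absolute size $O\bigl(\Delta_{n}\int_{|z|\le u_{n}}|z|\nu(dz)\bigr)$, a quantity that tends to zero under the first rate condition. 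Applying the joint law in Lemma \ref{L:law_sum_cond_big} on the no-big-jump event produces a sharp bound of the form $P(|\Delta_{i}^{n}X|>v_{n},\,\text{no big jump})\lesssim \Delta_{n}\bigl(\int_{|z|\le u_{n}}|z|\nu(dz)/v_{n}\bigr)^{1-\eps/2}$ plus Gaussian tails; combined with $E[|\Delta_{i}^{n}X^{c}|^{2}]=O(\Delta_{n})$ and Cauchy--Schwarz, summation over $i$ and the first assumed condition gives $(n\Delta_{n})^{-1/2}|B_{n}|=o_{P}(1)$. The analysis of $C_{n}$ is symmetric: the small-jump contribution on no-big-jump intervals is again controlled by Lemma \ref{L:law_sum_cond_big}, while the intervals containing a big jump contribute, after summation, at most of order $\sqrt{n}\,\Delta_{n}^{3/2-\eps}\bigl(\int_{|z|\ge u_{n}}\nu(dz)\bigr)^{1-\eps/2}$, which vanishes by the second condition. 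For statement (i) the same decomposition gives in each case a spare factor $\Delta_{n}^{1/2}$ that allows the coarser normalization $(n\Delta_{n})^{-1}$ without any rate assumption.

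The main obstacle is the infinite-activity treatment of $C_{n}$ on no-big-jump intervals, where the filter $\mathbf{1}_{\{|\Delta_{i}^{n}X|\le v_{n}\}}$ does not remove the residual small-jump integral: a naive product bound obtained by multiplying $P(\text{big jump in }[t_{i-1},t_{i}])$ by $E[|\Delta_{i}^{n}X^{J}|]$ is far too weak to match the sharp conditions stated. The device of Lemma \ref{L:law_sum_cond_big}, which disentangles the contribution of the single biggest jump in an interval from the cumulated mass of all the others, is precisely what upgrades the exponents to $1-\eps/2$ and yields the two conditions. Finally, the uniformity in $\theta$ needed for part (i) follows from the Hölder regularity in assumption (a), the polynomial growth in (b) and a standard chaining argument on the compact set $\Theta$, exactly as in the finite-activity Proposition \ref{prop:jumpfiltering}.
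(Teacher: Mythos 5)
Your decomposition is essentially the paper's: Euler remainder plus filter error, with the filter error split at the jump-size threshold $3v_n/\gamma_{min}$ and Lemma \ref{L:law_sum_cond_big} invoked for the joint law of the biggest jump and the rest. However, there is a genuine gap in your treatment of $B_{n}$, precisely on the intervals that \emph{do} contain a big jump. You only discuss the no-big-jump part of $B_n$ ("the event $\{|\Delta_i^nX|>v_n\}$ is rare"), but when a jump of size $>3v_n/\gamma_{min}$ occurs the indicator $\mathbf{1}_{\{|\Delta_i^nX|>v_n\}}$ is typically \emph{on}, and this event is not rare in the infinite-activity regime: its probability per interval is of order $\Delta_n\int_{|z|\geq 3v_n/\gamma_{min}}\nu(dz)$, which need not be small. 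Any $\LL^1$/H\"older bound on $\sum_i f(\theta,X_{t_{i-1}})\Delta_i^nX^c\,\mathbf{1}_{\{\text{big jump in }(t_{i-1},t_i]\}}$ gives, after dividing by $\sqrt{n\Delta_n}$, an order $\sqrt{n}\,\Delta_n^{1-\eps/2}\bigl(\int_{|z|\geq 3v_n/\gamma_{min}}\nu(dz)\bigr)^{1-\eps/2}$, whereas the hypothesis only controls $\sqrt{n}\,\Delta_n^{3/2-\eps}\bigl(\int_{|z|\geq 3v_n/\gamma_{min}}\nu(dz)\bigr)^{1-\eps/2}$ — you are short by a factor $\Delta_n^{1/2}$. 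The paper closes this gap with a conditional-centering argument (Lemma 9 of \cite{Genon1993}): since $W$ remains a Brownian motion with respect to the filtration enlarged by $\sigma(L)$, the Brownian part of $\Delta_i^nX^c\,\mathbf{1}_{(N_n^i)^c}$ has vanishing conditional expectation given $\F_{i-1}$, so only the drift contributes to the conditional mean, which is of order $\Delta_n^{2-\eps/2}\bigl(\int_{|z|>3v_n/\gamma_{min}}\nu(dz)\bigr)^{1-\eps/2}$, and the conditional second moments are summable. Without this step (or an equivalent one) the second rate condition cannot be reached.

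Two smaller inaccuracies. First, Lemma \ref{L:law_sum_cond_big} bounds $P\bigl((N_n^i)^c\cap(A_n^i)^c\bigr)$, i.e.\ the joint probability that a big jump occurs \emph{and} the cumulated mass of the remaining jumps is large; it is needed for the $C_n$-type terms (increment kept by the filter despite a big jump), not on the no-big-jump event, where a plain Markov bound on the compensated small-jump integral suffices. Second, your claim that Lemma \ref{lem:euler} yields $E[|A_n|^2]=O(n\Delta_n^3)$ is inconsistent with your own need for $n\Delta_n^{3-\eps}\to0$: such an $L^2$ bound would make $(n\Delta_n)^{-1/2}A_n$ vanish unconditionally. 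The actual obstruction in Lemma \ref{lem:euler} is the non-centered drift/It\^o-correction part of $A_{n,3}$, whose $\LL^1$ norm is of order $n\Delta_n^2$, which is why the condition $n\Delta_n^{3-\eps}\to0$ is genuinely required there. Neither of these affects the conclusion since you defer to the cited lemmas, but the missing martingale argument for $B_n$ on big-jump intervals does.
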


The proofs of both propositions are based on the following three lemmas. Lemma \ref{lem:filter_error_finite}
and \ref {lem:filter_infiniteactivity} describe the approximation of the discretized stochastic integral with respect to $X^c$ by the jump filter in the cases of finite and infinite activity, respectively. 
To prove the propositions \ref{prop:jumpfiltering} and \ref{prop:jumpfiltering_infinit} we also need a  convergence result for the Euler scheme in order to approximate the stochastic integral with respect to $X^c$ by the corresponding discrete sum. This will be done in Lemma \ref{lem:euler}.

\begin{lem}[jump filtering error: finite activity]
\label{lem:filter_error_finite}

Assume that $L$ is of finite activity and $f:\Theta\times \R \to \R$ is such that $\sup_{\t\in\Theta}|f(\t,x)|$ is sub-polynomial.
Under Assumption \ref{ass:existence} to \ref{ass:jumps},  we obtain
\begin{enumerate}
\item[(i)]
 \begin{equation*}
 \sup_{\t\in\Theta}| \sum_{i=1}^{n}f(\theta,X_{t_{i-1}})\left(\Delta_{i}^nX^{c}-\Delta_{i}^nX\mathbf{1}_{|\Delta_{i}^nX|\leq v_n}\right)|=O_{ \LL^1}( n\Delta_n^{3/2-\eps/2}).
 \end{equation*}
\item [(ii)] for all $\t\in\Theta,$ if $n\Delta_{n}^{3-\eps}\to0 $ as $n\to\infty$,
\begin{align*}
 & \sum_{i=1}^{n}f(\theta,X_{t_{i-1}})\left(\Delta_{i}^nX^{c}-\Delta_{i}^nX\mathbf{1}_{|\Delta_{i}^nX|\leq v_n}\right)=o_{P}\left(\sqrt{n\Delta_{n}}\right)\\
 & +O_{\LL^1}\left(n\Delta_n^{5/2-\eps}+n\Delta_n^{3/2-\eps/2}\left(\int_{|z|\leq 2v_n}\nu(dz)\right )^{1-\eps/2}+n\Delta_{n}\int_{|z|\leq 2v_n}|z|\nu(dz)\right).
 \end{align*}

\end{enumerate}

\end{lem}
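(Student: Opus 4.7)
The plan proceeds from the algebraic decomposition
\[
\Delta_{i}^n X^c - \Delta_{i}^n X \mathbf{1}_{|\Delta_{i}^n X|\leq v_n} = A_i - B_i,
\]
where $A_i := \Delta_{i}^n X^c \mathbf{1}_{|\Delta_{i}^n X|>v_n}$ collects the continuous increment wrongly discarded by the filter, and $B_i := (\Delta_i^n X - \Delta_i^n X^c)\mathbf{1}_{|\Delta_{i}^n X|\leq v_n}$ collects the jumps that survive the filter. I would further slice each term according to $N_i$, the number of jumps of $L$ in $(t_{i-1},t_i]$, exploiting the finite-activity estimate $\P(N_i\geq k)\leq C(\nu(\R)\Delta_n)^k$ and the independence of $W$ and $L$. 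The per-interval conditional moment bound $\E[|\Delta_i^n X^c|^p \mid \FF_{t_{i-1}}]\leq C_p \Delta_n^{p/2}(1+|X_{t_{i-1}}|^p)$, which follows from the Lipschitz hypothesis on $b,\sigma$ together with Lemma \ref{lem:ergodic} and Burkholder--Davis--Gundy, will be the basic quantitative input.

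For part (i), the uniform-in-$\theta$ bound, I would bring the supremum inside the sum,
\[
\sup_\theta \Bigl|\sum_i f(\theta,X_{t_{i-1}})(A_i-B_i)\Bigr|\leq \sum_i \sup_\theta |f(\theta,X_{t_{i-1}})|\cdot(|A_i|+|B_i|),
\]
and use the sub-polynomial growth of $\sup_\theta|f(\theta,\cdot)|$ together with the uniform moments of $X_{t_{i-1}}$ to reduce the problem to a per-interval $L^1$ estimate of order $\Delta_n^{3/2-\eps/2}$. The central tool is Hölder's inequality with the tuned exponent $p=2/\eps$,
\[
\E[|\Delta_i^n X^c|\mathbf{1}_{N_i\geq 1}]\leq (\E|\Delta_i^n X^c|^p)^{1/p}\,\P(N_i\geq 1)^{1-1/p}\leq C\Delta_n^{1/2}\cdot\Delta_n^{1-\eps/2}=C\Delta_n^{3/2-\eps/2}.
\]
The $A_i$ contribution on $\{N_i=0\}$, where $\{|\Delta_i^n X|>v_n\}=\{|\Delta_i^n X^c|>v_n\}$, is made negligible by Markov applied to a high moment of $|\Delta_i^n X^c|/v_n$; and $B_i$ is controlled using $|\Delta_i^n X-\Delta_i^n X^c|\leq v_n+|\Delta_i^n X^c|$ on the filter-passing event, after which both pieces fall into the above Hölder estimate.

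For part (ii), at fixed $\theta$, I would separate the martingale part from the drift and retain the finer jump factor $\int_{|z|\leq 2v_n}\nu(dz)$. Writing $\Delta_i^n X^c = M_i + R_i$ with $M_i=\int_{t_{i-1}}^{t_i}\sigma(X_s)\dd W_s$ and $R_i=\int_{t_{i-1}}^{t_i}b(X_s)\dd s$, the drift piece $f(\theta,X_{t_{i-1}})R_i\mathbf{1}_{|\Delta_i^n X|>v_n}$ is of size $\Delta_n\cdot\Delta_n^{3/2-\eps/2}$ in $L^1$, giving the stated $n\Delta_n^{5/2-\eps}$ term. The martingale piece, after subtracting its $\FF_{t_{i-1}}$-conditional mean (absorbed into the explicit $L^1$ remainders), becomes a martingale-difference sum to which Burkholder--Davis--Gundy applies, yielding fluctuations of order $\sqrt{n}\Delta_n^{1-\eps/2}=o(\sqrt{n\Delta_n})$ under $n\Delta_n^{3-\eps}\to 0$. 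The $B_i$ term is refined by conditioning on $\{N_i=1\}$: Campbell's formula produces $n\Delta_n\int_{|z|\leq 2v_n}|z|\nu(dz)$, while the Hölder estimate above, restricted to $\{|\xi|\leq 2v_n/\gamma_{\min}\}$, produces the remaining $n\Delta_n^{3/2-\eps/2}(\int_{|z|\leq 2v_n}\nu(dz))^{1-\eps/2}$ contribution.

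The main obstacle lies in the regime where the jump size is comparable to the threshold, $|\gamma(X_{\tau-})\xi|\sim v_n$: here the event $\{|\Delta_i^n X|\leq v_n\}$ depends genuinely on the interplay of the jump with the continuous increment, and a crude bound $|B_i|\leq v_n$ would destroy the integrability gain $\int_{|z|\leq 2v_n}|z|\nu(dz)\to 0$. Cleanly separating the small-jump (filter-surviving) and large-jump (filter-removed) regimes and matching the stated exponents uniformly in $\eps$ is exactly the role of the joint-law analysis of the largest jump and the remaining contribution in each sampling interval (Lemma \ref{L:law_sum_cond_big}), which supplies the sharp bookkeeping needed here and, a fortiori, in the infinite-activity companion.
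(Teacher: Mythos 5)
Your overall skeleton---splitting the error into the continuous increments wrongly discarded by the filter and the jump increments that survive it, slicing each sampling interval by the number of jumps $N_i$, and tuning the H\"older exponent so that $\P(N_i\geq 1)^{1-\eps/2}=O(\Delta_n^{1-\eps/2})$---is exactly the paper's route (its events $K_n^i$, $M_n^i$, $D_n^i$ and terms $G_n^1,G_n^2,G_n^3$, resp. $G_n^1,A_n^2,A_n^3$). But one step in part (ii) fails as written: you center the retained martingale piece $\sum_i f(\t,X_{t_{i-1}})M_i\mathbf{1}_{|\Delta_i^nX|>v_n}$ by its $\F_{t_{i-1}}$-conditional mean and assert that this mean is ``absorbed into the explicit $L^1$ remainders''. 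It is not. Since $\{|\Delta_i^nX|>v_n\}$ is essentially $\{N_i\geq1\}$ and this event is \emph{not} independent of $M_i$, the best generic bound is
\[
\bigl|E[M_i\mathbf{1}_{|\Delta_i^nX|>v_n}\mid\F_{t_{i-1}}]\bigr|\le \bigl(E[|M_i|^p\mid\F_{t_{i-1}}]\bigr)^{1/p}\,\P(N_i\ge1)^{1-\eps/2}+\cdots=O(\Delta_n^{3/2-\eps/2}),
\]
which summed over $i$ gives $O_{\LL^1}(n\Delta_n^{3/2-\eps/2})$ \emph{without} the factor $\left(\int_{|z|\le 2v_n}\nu(dz)\right)^{1-\eps/2}$; this is neither one of the listed remainders nor $o(\sqrt{n\Delta_n})$ under $n\Delta_n^{3-\eps}\to0$ (it would require $n\Delta_n^{2-\eps}\to0$). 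The paper's resolution is the identity $\mathbf{1}_{\{|\Delta_i^nX|>v_n\}\cap\{N_i\ge1\}}=\mathbf{1}_{\{N_i\ge1\}}-\mathbf{1}_{\{|\Delta_i^nX|\le v_n\}\cap\{N_i\ge1\}}$: the first indicator is $\sigma(L)$-measurable, so by independence of $W$ and $L$ the Brownian integral has \emph{exactly} zero conditional mean against it (only the drift $R_i$ contributes, $O(\Delta_n^{2-\eps/2})$ per interval, hence $o_P(\sqrt{n\Delta_n})$ under $n\Delta_n^{3-\eps}\to0$), and the second term is bounded in $\LL^1$ by the trichotomy $\{N_i\ge2\}$, one jump with $|\Delta_i^nL|<2v_n/\gamma_{min}$, or one large jump forcing $|\Delta_i^nX^c|\ge v_n$; this is where $n\Delta_n^{5/2-2\eps}$ and $n\Delta_n^{3/2-\eps/2}(\int_{|z|\le2v_n}\nu(dz))^{1-\eps/2}$ actually arise. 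You need this device; naive conditional centering against the filter indicator does not close.

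Two smaller points. Lemma \ref{L:law_sum_cond_big} is not needed here: in the finite-activity case the ``jump comparable to the threshold'' regime is handled by the elementary inclusion $\{N_i=1,\,|\Delta_i^nL|\ge 2v_n/\gamma_{min},\,|\Delta_i^nX|\le v_n\}\subset\{|\Delta_i^nX^c|\ge v_n\}$, whose probability is $O(\Delta_n^{\eps p})$ for every $p$; the joint-law lemma is only required in the infinite-activity companion. Also, your H\"older bound for the surviving-jump term carries the extra $v_n$ and yields $O(n(v_n+\Delta_n^{1/2})\Delta_n^{1-\eps/2})=O(n\Delta_n^{3/2-3\eps/2})$ rather than the stated $n\Delta_n^{3/2-\eps/2}$; this only costs a constant multiple of $\eps$ in the exponent (and the paper's own bookkeeping has the same looseness), but it does not literally reproduce the claimed exponent.
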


The next lemma extends the uniform bound to the case of infinite activity.

\begin{lem}[jump filtering error: infinite activity]
\label{lem:filter_infiniteactivity}
Assume that $L$ is of infinite activity and $f:\Theta\times \R \to \R$ is such that $\sup_{\t\in\Theta}|f(\t,x)|$ is sub-polynomial.
\begin{enumerate}
\item[(i)]
 Under Assumption \ref{ass:existence} to \ref{ass:jumps}, we obtain

\begin{align*}
 &\sup_{\t\in\Theta}|\sum_{i=1}^{n}f(\theta,X_{t_{i-1}})\left(\Delta_{i}^nX^{c}-\Delta_{i}^nX\mathbf{1}_{|\Delta_{i}^nX|\leq v_n}\right)|=\\
 & O_{\LL^1}\left({n\Delta_{n}} \left(\int_{|z|\leq 3v_n}|z|\nu(dz)\right )^{1-\eps/2}+
  n\Delta_n^{3/2-\eps} \left(\int_{|z|\geq v_n/\gamma_{min}}\nu(dz)\right)^{1-\eps/2}
\right)
\end{align*}
\item [(ii)]  for all $\t\in\Theta,$ if $n\Delta_n^{3-\eps}\left(\int_{|z|\geq 3v_n/\gamma_{min}}\nu(dz)\right)^{2-\eps}\to 0,$ 
 as $n\to\infty,$  then
\begin{align*}
 & \sum_{i=1}^{n}f(\theta,X_{t_{i-1}})\left(\Delta_{i}^nX^{c}-\Delta_{i}^nX\mathbf{1}_{|\Delta_{i}^nX|\leq v_n}\right)= o_{P}\left(\sqrt{n\Delta_{n}}\right)\\ &+o_{\LL^1}\left(n\Delta_n^{2-\eps}(\int_{|z|\geq 3v_n/\gamma_{min}}\nu(dz))^{1-\eps/2}\right)
  +O_{\LL^1}\left(n\Delta_n(\int_{|z|\leq 3v_n/\gamma_{min}}|z|\nu(dz))^{1-\eps/2}\right ). 
 \end{align*}

\end{enumerate}

\end{lem}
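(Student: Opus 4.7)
The approach is to start from the exact identity
\[
\Delta_i^n X^c - \Delta_i^n X\,\mathbf{1}_{|\Delta_i^n X|\le v_n}
= \Delta_i^n X^c\,\mathbf{1}_{|\Delta_i^n X|>v_n} - \Delta_i^n J\,\mathbf{1}_{|\Delta_i^n X|\le v_n},
\]
where $\Delta_i^n J = \int_{t_{i-1}}^{t_i}\int_\R \gamma(X_{s-})z\,\mu(ds,dz)$. I would split the driving Poisson measure at the threshold $v_n/\gamma_{min}$, producing a small-jump part $J^s$ and a big-jump counter $N_i$ on $(t_{i-1},t_i]$; because $\gamma_{min}\le |\gamma|\le\gamma_{max}$, a single big jump of $L$ creates an increment of $X$ of size at least $v_n$, so the filter typically triggers, while on $\{N_i=0\}$ one has the uniform control $|\Delta_i^n J|\le\gamma_{max}|\Delta_i^n L^s|$. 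The subsequent analysis then partitions over $\{N_i=0\}$ and $\{N_i\ge 1\}$.

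For part (i) I insert $\sup_{\theta\in\Theta}|f(\theta,X_{t_{i-1}})|$ and apply Cauchy-Schwarz with the polynomial moments of Lemma \ref{lem:ergodic}. To bound $\Delta_i^n X^c\,\mathbf{1}_{|\Delta_i^n X|>v_n}$, I decompose $\{|\Delta_i^n X|>v_n\}$ into the tail event of the continuous part, the event that the small-jump increment exceeds $v_n/4$, and $\{N_i\ge 1\}$: the first yields a Gaussian-type tail negligible against any polynomial rate, and the other two produce probabilities that, raised to the power $1-\eps/2$ via interpolation against an $\LL^{2/\eps}$-moment on $\Delta_i^n X^c$, give the two factors in the statement. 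For $\Delta_i^n J\,\mathbf{1}_{|\Delta_i^n X|\le v_n}$, on $\{N_i=0\}$ one uses directly $E|\Delta_i^n J^s|\le\gamma_{max}\Delta_n\int_{|z|\le v_n/\gamma_{min}}|z|\nu(dz)$, while on $\{N_i\ge 1\}\cap\{|\Delta_i^n X|\le v_n\}$ one calls on Lemma \ref{L:law_sum_cond_big} to bound the conditional probability that the sum of the remaining jumps compensates the biggest one, which outputs the factor $(\int_{|z|\ge v_n/\gamma_{min}}\nu(dz))^{1-\eps/2}$.

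For part (ii), at fixed $\theta$ I would sharpen the $\LL^1$ bound by isolating the dominant martingale piece $M_n(\theta):=\sum_i f(\theta,X_{t_{i-1}})\Delta_i^n X^c\,\mathbf{1}_{\{N_i=0\}\cap\{|\Delta_i^n J^s|\le v_n/2\}}$, whose $\FF_{t_{i-1}}$-conditional expectations nearly vanish because $\Delta_i^n X^c$ is a continuous martingale increment up to a small drift. Burkholder-Davis-Gundy then controls its variance by $Cn\Delta_n$ times a small-jump functional that is $o(1)$ under the stated integrability of $\nu$, yielding the $o_P(\sqrt{n\Delta_n})$ contribution; the condition $n\Delta_n^{3-\eps}\to 0$ absorbs the residual Itô-Taylor corrections in the same spirit as Lemma \ref{lem:euler}. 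The compensator of the small jumps provides the $O_{\LL^1}$ term involving $(\int_{|z|\le 3v_n/\gamma_{min}}|z|\nu(dz))^{1-\eps/2}$, and the rare-event contribution from $\{N_i\ge 1\}$ furnishes the $o_{\LL^1}$ term with $(\int_{|z|\ge 3v_n/\gamma_{min}}\nu(dz))^{1-\eps/2}$.

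The hard part will be the coupling analysis on $\{N_i\ge 1\}\cap\{|\Delta_i^n X|\le v_n\}$: a naive union bound would give only the factor $\Delta_n\int_{|z|\ge v_n/\gamma_{min}}\nu(dz)$, which is insufficient for the infinite-activity regime. Lemma \ref{L:law_sum_cond_big} is the decisive tool here, since it describes the joint law of the biggest jump and the sum of the others in a sampling interval, so that one can show that conditionally on having one big jump of amplitude $|\gamma(X_{t_{i-1}})z|\ge v_n$, the accompanying small jumps must lie in a thin window to keep $|\Delta_i^n X|\le v_n$; it is precisely this sharper conditional description that yields the $1-\eps/2$ exponent and, through it, the weakened assumption $n\Delta_n^{3-\eps}\to 0$ relative to previous works on joint drift-diffusion-jump estimation.
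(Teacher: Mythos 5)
Your proposal follows essentially the same route as the paper's proof: split the driving L\'evy measure at a threshold of order $v_n/\gamma_{min}$ into a rare ``big--jump'' event and a small--jump remainder, interpolate rare--event probabilities against polynomial moments via H\"older with exponent $1-\eps/2$, invoke Lemma \ref{L:law_sum_cond_big} for the joint event that a big jump occurs \emph{and} the remaining jumps compensate it (the paper's events $N_n^i$, $A_n^i$ and terms $B_n^1,\dots,B_n^4$), and use a martingale argument of Genon-Catalot--Jacod type for the $o_P(\sqrt{n\Delta_n})$ piece. You also correctly identify the crux: a naive union bound on the event ``one big jump and the filter fails to trigger'' is insufficient, and it is the joint law of the biggest jump and the sum of the others that delivers the improved exponent.

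Two points need repair, both fixable. First, the big--jump threshold must be $3v_n/\gamma_{min}$ (as in the statement and in $N_n^i$), not $v_n/\gamma_{min}$: with your threshold a single jump of $L$ of size barely above $v_n/\gamma_{min}$ produces a jump of $X$ of size barely above $v_n$, which the continuous increment and the other small jumps can cancel back below $v_n$ with non-negligible probability, so ``the filter typically triggers'' fails on that boundary band; the factor $3$ gives the slack needed for the argument on $K_n^i\cap(N_n^i)^c\cap A_n^i$ to force $|\Delta_i^nX^c|\ge v_n$. Second, in part (ii) the object $M_n(\t)=\sum_i f(\t,X_{t_{i-1}})\Delta_i^nX^c\mathbf{1}_{\{N_i=0\}\cap\{|\Delta_i^nJ^s|\le v_n/2\}}$ carries the wrong indicator: as written it is the full Euler sum $\sum_i f\,\Delta_i^nX^c$ up to rare corrections, hence $O_P(\sqrt{n\Delta_n})$ and certainly not $o_P(\sqrt{n\Delta_n})$. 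The small-conditional-mean/small-variance argument (your ``$Cn\Delta_n$ times an $o(1)$ factor'') applies to the sum indexed by the \emph{rare} indicator, i.e.\ the analogue of the paper's $\mathbf{1}_{(N_n^i)^c}$; moreover that indicator should be measurable with respect to $L$ alone (the paper defines $N_n^i$ through $\Delta L_s$, not through $\Delta_i^nX^J$), so that $E[\int_{t_{i-1}}^{t_i}\sigma(X_s)\,dW_s\,\mathbf{1}\mid\F_{t_{i-1}}]$ vanishes exactly by conditioning on $\sigma(L)$; an event defined via $\Delta_i^nJ^s$ depends on the path of $X$ and would break this exact cancellation.
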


The approximation of the stochastic integral is treated in the following
lemma.

\begin{lem}[Euler scheme]\label{lem:euler} 

Suppose that  $f:\Theta\times \R \to \R$ satisfies the following assumptions:
\begin{enumerate}
\item[a)] for all $x\in\R,$ $f(.,x)$ is Hölder continuous with respect to $\t\in\Theta:$
\[\forall  \t, \t',  \quad |f(\t,x)-f(\t',x)|\leq K(x)|\t-\t'|^{\kappa};\] where $0<\kappa\leq 1$  and $K:\R\to\R_+$ is at most of polynomial growth;

\item[b)] for all $ \t\in\Theta$, $f(\t,.)\in\C^2(\R)$ and $\sup_{\t\in\Theta}|f(\t,.)|$, $\sup_{\t\in\Theta}|f'_x(\t,.)|$ and $\sup_{\t\in\Theta} |f''_x(\t,.)|$ are at most of polynomial growth.

\end{enumerate} 
Under Assumptions \ref{ass:existence} to \ref{ass:jumps}, we obtain

\begin{enumerate}
\item [(i)] as $n\to\infty,$
\[
\sup_{\t\in\Theta}(n\Delta_n)^{-1} \left|\int_{0}^{t_{n}}f(\theta,X_{s})\; dX_{s}^{c}-\sum_{i=1}^{n}f(\theta,X_{t_{i-1}})\Delta_{i}^nX^{c} \right|\stackrel{P}{\longrightarrow} 0;
\]

\item [(ii)]
if $n\Delta_n^{3-\eps}\to 0,$ then, as $n\to\infty,$
\[\forall \t\in\Theta,\quad
(n\Delta_n)^{-1/2} \left|\int_{0}^{t_{n}}f(\theta,X_{s})\; dX_{s}^{c}-\sum_{i=1}^{n}f(\theta,X_{t_{i-1}})\Delta_{i}^nX^{c} \right|\stackrel{P}{\longrightarrow} 0.\]

\end{enumerate}
\end{lem}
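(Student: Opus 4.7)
My plan is to express the discretization error as a Brownian stochastic integral, bound its $L^2$-moment via It\^o's isometry, and lift the pointwise control to a uniform one via Kolmogorov's continuity criterion, exploiting the H\"older regularity of $f$ in $\theta$.

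First, since $dX_s^c = \sigma(X_s)\,dW_s$, I rewrite the error as
\[
R_n(\theta) := \int_0^{t_n} f(\theta, X_s)\,dX_s^c - \sum_{i=1}^n f(\theta, X_{t_{i-1}}) \Delta_i^n X^c = \sum_{i=1}^n \int_{t_{i-1}}^{t_i} [f(\theta, X_s) - f(\theta, X_{t_{i-1}})] \sigma(X_s)\,dW_s,
\]
an $(\F_t)$-martingale with orthogonal increments across the sub-intervals. This representation sidesteps any Euler-scheme bias analysis and reduces the problem to second-moment computations.

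For part (ii), It\^o's isometry gives
\[
E[R_n(\theta)^2] = \sum_{i=1}^n \int_{t_{i-1}}^{t_i} E\bigl[(f(\theta, X_s) - f(\theta, X_{t_{i-1}}))^2 \sigma^2(X_s)\bigr]\,ds.
\]
Using the mean value theorem (assumption (b) gives $f(\theta,\cdot) \in C^1$), sub-polynomial growth of $f'_x$ and $\sigma$, together with Cauchy--Schwarz and the $L^q$-moment bounds on $X_s$ from Lemma \ref{lem:ergodic}, the inner expectation is dominated by $C\,E[|X_s - X_{t_{i-1}}|^2]^{1/2}$ up to polynomial-moment factors. Splitting $X_s - X_{t_{i-1}}$ into its drift, Brownian and jump parts, applying Burkholder--Davis--Gundy, and invoking Assumption \ref{ass:jumps}(ii) ($\int|z|\,\nu(dz)<\infty$) together with Assumption \ref{ass:ergodic}(i) (all polynomial moments of $\nu$ at infinity, so in particular $\int z^2\,\nu(dz)<\infty$), I obtain $E[|X_s - X_{t_{i-1}}|^2] \leq C\Delta_n$. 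Hence $E[R_n(\theta)^2] \leq C n\Delta_n^2$ and Markov's inequality yields $(n\Delta_n)^{-1/2}|R_n(\theta)| \to 0$ in probability. The stated hypothesis $n\Delta_n^{3-\eps}\to 0$ is consistent with this estimate and is the rate needed when this lemma is combined with the jump-filtering Lemmas \ref{lem:filter_error_finite}--\ref{lem:filter_infiniteactivity} in the proofs of Theorems \ref{thm:anfinact}--\ref{thm:aninfinact}.

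For part (i), applying BDG to $R_n(\theta)-R_n(\theta')$ with H\"older continuity (assumption (a)) gives, for every $p\geq 1$,
\[
E\bigl[|R_n(\theta) - R_n(\theta')|^{2p}\bigr] \leq C_p\, |\theta-\theta'|^{2p\kappa}\, t_n^{p},
\]
the constant $C_p$ being finite by Lemma \ref{lem:ergodic}. Choosing $p$ large enough that $2p\kappa > d$, Kolmogorov's continuity criterion yields a modification of $\theta\mapsto R_n(\theta)$ that is $\alpha$-H\"older on $\Theta$ (with $\alpha<\kappa-d/(2p)$) whose H\"older seminorm is bounded in $L^{2p}$ by $C\,t_n^{1/2}$; combined with the pointwise bound $|R_n(\theta_0)|=O_{L^2}(\sqrt{n\Delta_n^2})$ at an arbitrary $\theta_0\in\Theta$ and compactness of $\Theta$, one obtains
\[
(n\Delta_n)^{-1}\sup_{\theta\in\Theta}|R_n(\theta)| = O_{L^2}\!\bigl(n^{-1/2}+(n\Delta_n)^{-1/2}\bigr),
\]
which tends to $0$ in probability without any restriction on $\Delta_n$. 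The main obstacle is the moment estimate $E[|X_s-X_{t_{i-1}}|^2]=O(\Delta_n)$, for which one must handle the jump contribution carefully: Assumption \ref{ass:ergodic}(i) gives polynomial moments of $\nu$ at infinity, Assumption \ref{ass:jumps}(ii) ensures integrability near zero, and Lemma \ref{lem:ergodic}(3) supplies uniform-in-$s$ moments of $X_s$ that tame the polynomial growth factors in $f'_x$, $\sigma$ and the H\"older constant $K$.
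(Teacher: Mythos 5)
There is a genuine gap, and it sits right at the first line: you set $dX_s^c=\sigma(X_s)\,dW_s$, but in this paper the continuous part of $X$ contains the drift, $dX_s^c=b(\ts,X_s)\,ds+\sigma(X_s)\,dW_s$ (this is stated explicitly at the start of the paper's proof and is forced by the form of the likelihood \eqref{eq:likelihood}). Consequently your $R_n(\theta)$ is \emph{not} a martingale: it equals $A_{n,1}+A_{n,2}+A_{n,3}$ in the paper's notation, and you have only analyzed the martingale piece $A_{n,1}=\sum_i\int_{t_{i-1}}^{t_i}(f(\theta,X_s)-f(\theta,X_{t_{i-1}}))\sigma(X_s)\,dW_s$. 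That piece is indeed $O_{L^2}(\sqrt{n\Delta_n^{1+1/q}})$ and needs no condition on $\Delta_n$ after dividing by $\sqrt{n\Delta_n}$ --- which should have been a warning sign, since the lemma explicitly imposes $n\Delta_n^{3-\eps}\to 0$ for part (ii). That hypothesis is not merely ``consistent with'' the estimate; it is exactly what the omitted drift terms require. The crude bound on the dominant drift term $A_{n,3}(\theta)=\sum_i\int_{t_{i-1}}^{t_i}(f(\theta,X_s)-f(\theta,X_{t_{i-1}}))\,b(\ts,X_{t_{i-1}})\,ds$, using $E[|X_s-X_{t_{i-1}}|^2]\le C\Delta_n$, only gives $O_{L^1}(n\Delta_n^{3/2})$, i.e.\ $O(\sqrt{n\Delta_n^2})$ after normalization, which would force $n\Delta_n^2\to 0$ --- precisely the condition the paper is trying to relax. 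The paper's proof avoids this by applying It\^o's formula to $f(\theta,X_u)-f(\theta,X_{t_{i-1}})$ inside $A_{n,3}$, splitting it into a conditionally centered martingale contribution (shown to be $o_P(\sqrt{n\Delta_n})$ via Lemma 9 of \cite{Genon1993}) plus Lebesgue and jump remainders of order $O_{L^1}(n\Delta_n^2)$, which is where $n\Delta_n^{3-\eps}\to 0$ enters. This step is the substance of the lemma and is entirely absent from your argument.

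Your treatment of the martingale term itself is sound and close in spirit to the paper: the It\^o-isometry bound with $E[|X_s-X_{t_{i-1}}|^{2q}]^{1/q}=O(\Delta_n^{1/q})$ matches the paper's \eqref{eq:An1}, and for uniformity in $\theta$ your Kolmogorov-criterion argument plays the same role as the paper's appeal to Theorem 20 in the Appendix of \cite{Ibra-Khas} (a chaining bound using the H\"older-in-$\theta$ moment estimate). But to repair the proof you must reinstate the drift contribution to $X^c$, bound $A_{n,2}$ by $O_{L^1}(n\Delta_n^{1+1/q})$ uniformly in $\theta$, and carry out the It\^o expansion of $A_{n,3}$ described above.
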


We have now collected all the tools to prove the convergence of the jump filter approximation towards integral functionals with respect to the continuous martingale part as stated in Proposition \ref{prop:jumpfiltering} and \ref{prop:jumpfiltering_infinit}.

\begin{proof}[Proof of Proposition \ref{prop:jumpfiltering}]
We decompose the difference as follows:
\begin{multline}
 \left|\int_{0}^{t_{n}}f(\theta,X_{s})\; dX_{s}^{c}-\sum_{i=1}^{n}f(\theta,X_{t_{i-1}})\Delta_{i}^nX\mathbf{1}_{|\Delta_{i}^nX|\leq v_n}\right|\leq\\
 \left|\int_{0}^{t_{n}}f(\theta,X_{s})\; dX_{s}^{c}-\sum_{i=1}^{n}f(\theta,X_{t_{i-1}})\Delta_{i}^nX^{c}\right|+\left|\sum_{i=1}^{n}f(\theta,X_{t_{i-1}})\Delta_{i}^nX^{c}-\sum_{i=1}^{n}f(\theta,X_{t_{i-1}})\Delta_{i}^nX\mathbf{1}_{|\Delta_{i}^nX|\leq v_n}\right|
 \label{al:ddcomp}
\end{multline}
We first prove (i). By Lemma \ref{lem:euler},
the first term on the right hand side of \eqref{al:ddcomp} divided by $n\Delta_n$ goes to zero uniformly, 
 without any condition on $\Delta_n.$ Combining it with (i) of the Lemma \ref{lem:filter_error_finite} we get the result.

We now prove (ii). 
 For the first term of \eqref{al:ddcomp} divided by $(n\Delta_n)^{1/2}$ we use (ii) of the Lemma~ \ref{lem:euler}. 
Moreover, 
the (ii) of the Lemma \ref{lem:filter_error_finite},
 gives, for any $\t\in\Theta,$
\begin{align*}
 & (n\Delta_n)^{-1/2}\sum_{i=1}^{n}f(\theta,X_{t_{i-1}})\left(\Delta_{i}^nX^{c}-\Delta_{i}^nX\mathbf{1}_{|\Delta_{i}^nX|\leq v_n}\right)=o_{P}(1)+\\
 & O_{\LL^1}\left(\sqrt n\Delta_{n}^{2-\eps}+ \sqrt n\Delta_n^{1-\eps/2}\left (\int_{|z|\leq 2 v_{n}}\nu(dz)\right)^{1-\eps/2}+\sqrt n\Delta_{n}^{1/2}\int_{|z|\leq 2v_{n}}|z|\nu(dz)\right)\stackrel{P}{\longrightarrow} 0
\end{align*}
under conditions (ii) of the proposition.

\end{proof}

\begin{proof}[Proof of Proposition \ref{prop:jumpfiltering_infinit}]
We use the decomposition \eqref{al:ddcomp}

 and prove first the statement (i).
Using the Lemma \ref{lem:euler} the first term of \eqref{al:ddcomp} divided by $n\Delta_n$ goes to zero uniformly without any condition on $\Delta_n.$

Lemma \ref{lem:filter_infiniteactivity} together with the Assumption \ref{ass:jumps} (ii) and the fact that $v_n=\Delta_n^{1/2-\eps}$ gives
\begin{align*}
 & (n\Delta_n)^{-1}\sup_{\theta}|\sum_{i=1}^{n}f(\theta,X_{t_{i-1}})\left(\Delta_{i}^nX^{c}-\Delta_{i}^nX\mathbf{1}_{|\Delta_{i}X|\leq v_n}\right)|=\\
 &O_{\LL^1}\left( \left(\int_{|z|\leq 3v_n}|z|\nu(dz)\right )^{1-\eps/2}+
  \Delta_n^{1/2-\eps/2} \left(\int_{|z|\geq  v_n/\gamma_{min}}\nu(dz)\right)^{1-\eps/2}
\right)=\\
&O_{\LL^1}\left( \left(\int_{|z|\leq 3v_n}|z|\nu(dz)\right )^{1-\eps/2}+
  \frac{\Delta_n^{1/2-\eps/2}} {v_n^{1-\eps/2}}\left(\int_{|z|\geq  v_n/\gamma_{min}}{|z|}\nu(dz)\right)^{1-\eps/2}
\right)\stackrel{P}{\longrightarrow} 0.
\end{align*}
Hence statement (i) is proved. \\
Now we prove statement (ii).
For any $\t\in\Theta,$ under the condition $n\Delta_n^{3-\eps}\to 0,$ the second statement of Lemma \ref{lem:euler} gives the convergence to $0$ of the first term in the decomposition \eqref{al:ddcomp}, divided by $\sqrt n\Delta_n$.  The convergence to $0$ of the second term of \eqref{al:ddcomp}, divided by $\sqrt n\Delta_n$, immediately follows from Lemma \ref{lem:filter_infiniteactivity} and the conditions of (ii).
\end{proof}
When discretizing the likelihood function, we need the following lemma, whose proof can be found in the Section \ref{sec:auxlemmas}.

\begin{lem}
\label{lem:Riemann_app}
 Suppose that Assumptions \ref{ass:existence}--\ref{ass:jumps} are satisfied . Suppose that 
 $f:\Theta\times\R\to\R$ is such that $\forall \t\in\Theta$, $f(\t,.)\in\C^1(\R)$ and $\sup_{\t\in\Theta}|f'(\t,.)|$ is  sub-polynomial. 
 Then we obtain:
 \begin{enumerate}
 \item[
 (i)] as $n\to\infty$,
\[
\sup_{\theta\in\Theta}\left|\int_{0}^{t_{n}}f(\theta,X_{s})\; ds-\sum_{i=1}^{n}f(\theta,X_{t_{i-1}})\Delta_{i}^nId\right|=O_{\LL^1}(
 n 
\Delta_{n}^{3/2});
\]

\item[(ii)] if $n\Delta_n^{3-\eps} \xrightarrow{n \to \infty} 0$,  then
\begin{equation*}
	(n\Delta_n)^{-1/2} |\int_{0}^{t_{n}}f(\theta,X_{s})\; ds-\sum_{i=1}^{n}f(\theta,X_{t_{i-1}})\Delta_{i}^nId|\xrightarrow{P} 0
	.	
\end{equation*}
\end{enumerate}
\end{lem}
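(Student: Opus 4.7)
The plan is to write the error as $R_n(\theta):=\sum_{i=1}^n\xi_i(\theta)$ with $\xi_i(\theta):=\int_{t_{i-1}}^{t_i}[f(\theta,X_s)-f(\theta,X_{t_{i-1}})]\,ds$, and to estimate the local terms $\xi_i$ by different techniques for the two assertions.

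For (i), I would apply the mean value theorem in $x$: since $\sup_{\theta\in\Theta}|f'_x(\theta,\cdot)|$ is sub-polynomial, one has $\sup_{\theta\in\Theta}|f(\theta,X_s)-f(\theta,X_{t_{i-1}})|\leq K(X_s,X_{t_{i-1}})\,|X_s-X_{t_{i-1}}|$ with $K$ polynomially bounded. The Cauchy--Schwarz inequality, combined with the standard SDE-increment estimate $E|X_s-X_{t_{i-1}}|^2\leq C(s-t_{i-1})$ (valid since $\int z^2\nu(dz)<\infty$ by Assumptions \ref{ass:ergodic}(i) and \ref{ass:jumps}(ii)) and the moment bounds from Lemma \ref{lem:ergodic}, then yields $E[\sup_{\theta}|f(\theta,X_s)-f(\theta,X_{t_{i-1}})|]\leq C(s-t_{i-1})^{1/2}$. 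Integrating over $s\in[t_{i-1},t_i]$ and summing, together with $t_n=O(n\Delta_n)$, gives the $O_{\LL^1}(n\Delta_n^{3/2})$ bound claimed in (i).

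For (ii), fix $\theta$ and decompose each local error around its conditional expectation, $\xi_i=\widetilde\xi_i+E[\xi_i\mid\mathcal{F}_{t_{i-1}}]$, so that $\sum_i\widetilde\xi_i$ is a martingale. Conditional Cauchy--Schwarz combined with $E[(f(\theta,X_s)-f(\theta,X_{t_{i-1}}))^2\mid\mathcal{F}_{t_{i-1}}]\leq C(1+|X_{t_{i-1}}|^{2p})(s-t_{i-1})$ gives $E\widetilde\xi_i^{\,2}=O((t_i-t_{i-1})^3)$, hence $E(\sum_i\widetilde\xi_i)^2=O(n\Delta_n^3)$; the martingale divided by $\sqrt{n\Delta_n}$ therefore has $L^2$-norm $O(\Delta_n)\to 0$. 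For the predictable part, the Markov property together with Dynkin's formula applied to $f(\theta,\cdot)\in C^2$ yields
\[|E[f(\theta,X_s)-f(\theta,X_{t_{i-1}})\mid\mathcal{F}_{t_{i-1}}]|\leq C(s-t_{i-1})(1+|X_{t_{i-1}}|^p),\]
based on sub-polynomial growth of the generator $L^{\ts}f=f'_x b+\tfrac{1}{2}f''_{xx}\sigma^2+\int[f(\cdot+\gamma z)-f(\cdot)]\nu(dz)$. Summing gives $E|\sum_i E[\xi_i\mid\mathcal{F}_{t_{i-1}}]|=O(n\Delta_n^2)$, which after division by $\sqrt{n\Delta_n}$ becomes $O(\sqrt{n\Delta_n^3})\to 0$, since $n\Delta_n^{3-\eps}\to 0$ implies $n\Delta_n^3\to 0$.

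The main technical point is control of the jump contribution to the generator: one must verify that $x\mapsto\int[f(\theta,x+\gamma(x)z)-f(\theta,x)]\nu(dz)$ is sub-polynomial uniformly in $\theta$, which follows from the mean value bound $|f(\theta,y+h)-f(\theta,y)|\leq C(|y|,|h|)\,|h|$ (with $C$ polynomial), the integrability $\int|z|\nu(dz)<\infty$ ensured by Assumptions \ref{ass:ergodic}(i) and \ref{ass:jumps}(ii), and the boundedness of $\gamma$ (Assumption \ref{ass:jumps}(iv)). A caveat is that Dynkin's formula strictly requires $f(\theta,\cdot)\in C^2$, which is slightly stronger than the stated $C^1$ hypothesis; however, in every application of the lemma in this paper the function $f$ is $C^2$ through Assumption \ref{ass:subpolynom}, so the statement should be read with this implicit additional regularity.
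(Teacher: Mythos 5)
Your proof is correct and follows the same underlying strategy as the paper's: part (i) is identical (mean value theorem in $x$, Cauchy--Schwarz, the increment bound of Lemma \ref{lem:jac-prot}, and $t_n=O(n\Delta_n)$), and part (ii) rests on the same martingale-plus-drift decomposition of $f(\theta,X_s)-f(\theta,X_{t_{i-1}})$. The only difference is packaging: the paper reduces (ii) to the term $A_{n,3}(\theta)$ of Lemma \ref{lem:euler}, applies It\^o's formula to produce an explicit stochastic integral, a Lebesgue drift term and a jump term, and then invokes Lemma 9 of Genon-Catalot and Jacod for the martingale array; you instead center each $\xi_i$ at its conditional expectation, kill the martingale part by direct $L^2$ orthogonality, and apply Dynkin's formula only to the predictable part. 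Both yield the same rates, namely a negligible martingale contribution and a predictable part of order $O_{\LL^1}(n\Delta_n^2)$, which divided by $\sqrt{n\Delta_n}$ vanishes under $n\Delta_n^{3}\to 0$. (A small quantitative remark: with $\sup_\theta|f'|$ only sub-polynomial, conditional H\"older gives $E[(f(\theta,X_s)-f(\theta,X_{t_{i-1}}))^2\mid\F_{t_{i-1}}]=O((s-t_{i-1})^{1-\delta})$ rather than $O(s-t_{i-1})$, since Lemma \ref{lem:jac-prot} only gives $E[|X_s-X_{t_{i-1}}|^p\mid\F_{t_{i-1}}]=O(s-t_{i-1})$ for every $p$; this changes nothing in the conclusion.) Your caveat about $C^2$ regularity is well taken and applies equally to the paper's own argument: the stated hypothesis of the lemma is only $f(\theta,\cdot)\in\C^1$, yet the paper's proof of (ii) defers to the It\^o-formula treatment of $A_{n,3}$ in Lemma \ref{lem:euler}, which assumes $f(\theta,\cdot)\in\C^2$ with sub-polynomial $\sup_\theta|f''_x|$; as you note, every invocation of the lemma in the paper concerns functions satisfying this stronger regularity via Assumption \ref{ass:subpolynom}.
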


\section{Examples and numerical results} \label{sec:applications}

In this section we consider concrete applications of the drift estimator in popular jump diffusion models and investigate the numerical performance in finite sample studies. We consider both examples with finite and infinite jump activity. 

In the first part we give explicit drift estimators for Ornstein-Uhlenbeck-type and CIR processes and compare there performance in a Monte Carlo study for finite activity jumps. Then we apply our method to a hyperbolic diffusion process with $\alpha$-stable jump component of infinite jump activity. We consider here for convenience only linear models in the drift parameter that lead to explicit maximum likelihood estimators in order to avoid the need for numerical maximization techniques. Note that the method developed in this work applies equally well to non-linear models by using standard maximization methods on the discretized and jump-filtered likelihood function \eqref{eq:like_filtered}.

It turns out that our estimators can be applied even beyond the scope of our theoretical results. To demonstrate this we include in Section \ref{sec:ex_inf} models that do not posses moments of all orders and consider L\'evy processes of unbounded variation in our simulations.

\subsection{Finite activity}

In this section we consider two different jump diffusion models with finite activity jumps. The first model will consist of Ornstein-Uhlenbeck-type processes that recently became popular in financial modeling (cf. for example \cite{BNS01}). In the second part we extend a Cox-Ingersoll-Ross model from finance (cf. \cite{CIR}) by including jumps and investigate the finite sample behavior of the drift estimator and jump filter for varying observation settings. The jump process $L$ is of compound Poisson type in the case of finite activity such that it can be written as
\begin{equation} \label{eq:compPois}
L_t=\sum_{i=1}^{N_t}Z_i, \text{ for $t \geq 0$,}
\end{equation} 
where $(N_t)_{t\geq 0}$ is a Poisson process with intensity $\lambda$ and $(Z_{i})_{i\in\N}$ are i.i.d. real random variables independent of $N$, with distribution $\nu/\lambda$.

\subsubsection{Ornstein-Uhlenbeck-type processes}

Suppose that we have given a discrete sample 
\begin{equation} \label{eq:observ_scheme}
 X_{t_0},\ldots,X_{t_n}\quad \text{ for $t_i = i \Delta_n$ and $i=0,\ldots,n$, }
\end{equation}
of an Ornstein-Uhlenbeck-type (OU) process $(X_t)_{t\geq 0}$ that is defined as a solution of the stochastic differential equation
\[
 dX_t =(\theta_2 - \theta_1 X_t) \; dt + \sigma\; dW_t + dL_t \quad X_0 = x,
\]
where $(W_t)_{t\geq 0}$ is a standard Brownian motion and $(L_t)_{t \geq 0}$ a pure jump L\'evy process. Our goal is to estimate the unknown drift parameter $\theta= (\theta_1,\theta_2) \in \mathbb{R}^2$. The volatility parameter $\sigma>0$ might be unknown and can be seen as a nuisance parameter. The jump component $(L_t)_{t\geq 0}$ will be of compound Poisson type, i.e. it can be written as in \eqref{eq:compPois}
 with intensity $\lambda$ and the jump heights $Z_i$ are supposed to be iid with exponential distribution with rate $1$. 

From \eqref{eq:like_filtered} and \eqref{eq:FMLE} we find that the FMLE for $\theta$ is the solution $\hat \theta_n^{\text{OU}} =(\hat \theta_{1,n}^{\text{OU}}, \hat \theta_{2,n}^{\text{OU}})$ to the following set of linear equations in $\theta_1$ and $\theta_2$.
\begin{align} 
   \theta_1 &=  \frac{\theta_2 I_n (X,1) - \sum_{i=1}^n X_{t_i} \Delta_i^n X \mathbf{1}_{|\Delta_i^n X| \leq v_n}}{I_n (X^2)}, \nonumber \\
   \theta_2 &=  \frac{\sum_{i=1}^n  \Delta_i^n X \mathbf{1}_{|\Delta_i^n X| \leq v_n}+\theta_1 I_n (X,1)}{t_n}, \label{eq:OU_FMLE2}
\end{align}
where we introduced the functional
\begin{equation}\label{Inp}
 I_n (X,p) := \sum_{i=1}^n X_{t_i}^p \Delta_i^nId \quad \text{for $p \in \mathbb{R}$}.
\end{equation}
The FLME for the first component of $\theta$ results in
\[
 \hat \theta_{1,n}^{\text{OU}} = \left( 1- \frac{I_n (X,1)^2}{I_n (X,2)} \right)^{-1} \frac{ I_n (X,1) \sum_{i=1}^n  \Delta_i^n X \mathbf{1}_{|\Delta_i^n X| \leq v_n}-t_n \sum_{i=1}^n X_{t_i} \Delta_i^n X \mathbf{1}_{|\Delta_i^n X| \leq v_n}}{t_n I_n (X,2)}.
\]
The second component $\hat \theta_{2,n}^{\text{OU}}$ follows now easily by plugging $\hat \theta_{1,n}^{\text{OU}}$ into \eqref{eq:OU_FMLE2}.

In Table \ref{tableOU} we give simulation results for $\hat \theta_{1,n}^{\text{OU}}$. The given mean and standard deviation are each based on 500 Monte Carlo samples of $\hat \theta_{1,n}^{\text{OU}}$. In this example we choose $v_n = \Delta_n^{1/3}$ in order to approximate well the continuous martingale part that appeared in the likelihood function \eqref{eq:likelihood}. We compare different observation schemes and different jump intensities $\lambda$ for true parameter values given by $\theta_1 = 2$ and $\theta_2 = 0$. The drift estimator performs well over the whole range of settings provide that the discretization distance $\Delta_n$ is sufficiently small. We also give the average number of jumps that were detected by the jump filter and observe that this number scales as expected linearly in $t_n$.

\begin{table} 
\begin{tabular}{ c c c c c c c c }
 \multicolumn{2}{c}{} & \multicolumn{3}{c}{$\lambda$ = 1} & \multicolumn{3}{c}{$\lambda$ = 6} \\
 \toprule
  $t_n$ & $n$ & mean & std dev & jumps filt & mean & std dev & jumps filt\\
\midrule
 
 2 & 100 & 1.4 & 0.7  & 6.5& 1.4 &0.6 & 15.8 \\
 
  & 300 & 1.8 & 0.8  & 6.8& 1.7 &0.6 & 15.9 \\
 
  & 600 & 2.0 & 0.8  & 7.9 & 1.9 & 0.5 & 16.3   \\

   & 800 & 2.0 & 0.8 & 7.2 & 2.0 & 0.6 & 16.5  \\

 5 & 600 & 1.4 & 0.6  & 13.1 &1.3 & 0.39 & 39.5 \\
 
 & 1200 & 1.8 & 0.6  & 13.6 &1.7 & 0.39 & 40.4 \\
 
 & 4000 & 2.0 & 0.7  & 13.6 &1.8 & 0.39 & 41.4 \\

    & 6000 & 2.1 & 0.7  & 12.4 & 1.9 & 0.37 & 41.5  \\

  10 & 600 & 1.2 & 0.26  & 19.1 &1.3 & 0.21 & 67 \\
 
 & 2000 & 2.0 & 0.27  & 21.6 &1.6 & 0.2 & 75 \\

\midrule
\end{tabular}
\caption{Monte Carlo estimates of mean and standard deviation from 500 samples of $\hat{\theta}_{1,n}^{\text{OU}}$ for an OU process with compound Poisson jumps with intensity $\lambda$ and true parameter $\theta_1 =2$.}\label{tableOU}
\end{table}


\subsubsection{Cox-Ingersoll-Ross (CIR) processes with jumps}

We define a CIR or square-root process $X=(X_t)_{t \geq 0}$ with jumps as a solution to the SDE
\[
 X_t = (\theta_1 - \theta_2 X_t) \; dt + \sigma \sqrt{X_t} \; dW_t + dL_t,
\] 
where $\theta_1, \theta_2, \sigma > 0$, $(W_t)$ is a standard Brownian motion and $(L_t)$ a pure jump L\'evy process. The two-dimensional drift parameter $\theta = (\theta_1, \theta _2)$ is unknown and will be estimated from discrete observations of $X$ as in \eqref{eq:observ_scheme}.

The classical CIR process without jumps (e.g. $L_t \equiv 0$) has the property that it stays non-negative at all times which makes it an interesting model for financial applications e.g. in interest rate modeling (Vasicek model) and stochastic volatility models (Heston model). We consider here therefore a jump component $(L_t)$ of compound Poisson type that exhibits only positive jumps such that $X$ will stay non-negative. In fact, we take a driving L\'evy process $(L_t)$ as in \eqref{eq:compPois} with intensity $\lambda = 1$ and  exponentially distributed jumps with rate $\eta>0$, e.g. $Z_i \sim \text{Exp}(\eta)$.

The filtered maximum likelihood estimator for $\theta$ is this model can be easily derived from \eqref{eq:like_filtered} and \eqref{eq:FMLE}. It is given as the solution $\hat \theta_n^{\text{CIR}} =(\hat \theta_{1,n}^{\text{CIR}}, \hat \theta_{2,n}^{\text{CIR}})$ to the following set of linear equations in the parameters $\theta_1$ and $\theta_2$.
\begin{align} \label{eq:lin_theta}
  \theta_1 = \frac{\theta_2 t_n - \sum_{i=1}^n X_{t_i}^{-1} \Delta_i^n X \mathbf{1}_{|\Delta_i^n X| \leq v_n} }{I_n (X,{-1})}, \quad
 \theta_2 = \frac{\theta_1 t_n - \sum_{i=1}^n \Delta_i^n X \mathbf{1}_{|\Delta_i^n X| \leq v_n} }{I_n (X,1)},
\end{align}
where $I_n (X,p)$ for $p \in \mathbb{R}$ was defined in \eqref{Inp}.
We obtain for $\hat \theta_{2,n}^{\text{CIR}}$ the FMLE
\[
 \hat \theta_{2,n}^{\text{CIR}} = \left( I_n (X,{-1}) I_n (X,1) - t_n \right)^{-1} \left(\sum_{i=1}^n X_{t_i}^{-1} \Delta_i^n X \mathbf{1}_{|\Delta_i^n X| \leq v_n} - I_n (X,{-1}) \sum_{i=1}^n \Delta_i^n X \mathbf{1}_{|\Delta_i^n X| \leq v_n} \right).
\]
The first component $\hat \theta_{1,n}^{\text{CIR}}$ follows now immediately by plugging $\hat \theta_{2,n}^{\text{CIR}}$ into \eqref{eq:lin_theta}.

To obtain Monte Carlo estimates of mean and standard deviation of $\hat \theta_n^{\text{CIR}}$ we simulate discrete samples of $X$ on an equidistant grid as in the previous example. We take $v_n = \Delta_n^{1/3}$ in order to approximate the continuous martingale part of $X$. In Table \ref{table:CIR} we report the results for $\hat \theta_{2,n}^{\text{CIR}}$ from 1000 Monte Carlo samples each. The results are given for different $t_n$, $n$ and $\sigma$ for true parameter values $\theta_1 =0.1$ and $\theta_2=2$. We find that $\hat \theta_{2,n}^{\text{CIR}}$ performs well as long as the discretization step size $\Delta_n$ is fine enough such that a high-frequency approximation becomes valid.

\begin{table} 
\begin{tabular}{ c c c c c c c c }
 \multicolumn{2}{c}{} & \multicolumn{3}{c}{$\sigma$ = 0.25} & \multicolumn{3}{c}{$\sigma$ = 0.5} \\
 \toprule
  $t_n$ & $n$ & mean & std dev & jumps filt & mean & std dev & jumps filt\\
\midrule
  5 & 200 &1.7&  0.22 & 6.8& 1.7 &0.28 &8.0 \\
  
  & 400 &1.9&  0.12 & 5.1& 1.8 &0.2 &6.6 \\
  
  & 800 &2.0&  0.09 & 4.5& 1.9 &0.17 &5.6 \\

 10 & 500 & 1.7 & 0.15  & 12 & 1.7 & 0.21 & 15   \\
 
 & 1000 & 1.9 & 0.08  & 9.7 & 1.8 & 0.14 & 12   \\

   & 1500 & 1.9 & 0.06 & 9.5 & 1.9 & 0.13 & 11\\

 20 & 1000 & 1.8 & 0.13  & 25 & 1.6 & 0.16 & 30 \\
 
 & 2000 & 1.9 & 0.06  & 19 & 1.8 & 0.11 & 24 \\

    & 3000 & 2.0 & 0.04  & 19 & 1.9 & 0.09 & 22  \\

\midrule
\end{tabular}
\caption{Monte Carlo estimates of mean and standard deviation of $\hat{\theta}_{2,n}^{\text{CIR}}$ for a CIR process with Gaussian component and compound Poisson jumps with intensity $\lambda=1$ and true drift parameter $\theta_2 = 2$.}\label{table:CIR}
\end{table}

\subsection{Infinite activity}\label{sec:ex_inf}

In this section we investigate estimation of the drift when the driving L\'evy process is of infinite jump activity. This is of course a more challenging problem with regards to the approximation of the continuous martingale part i.e. the jump filtering problem, since we have to distinguish a diffusion component from a process that jumps infinitely often in finite time intervals.

\subsubsection{Hyperbolic diffusions with jumps}

In this section we apply the drift estimator to hyperbolic diffusion processes with jumps. They are defined as solutions $(X_t)_{t\geq 0}$ of the following SDE:
\[
 dX_t = -\frac{\theta X_t}{(1+ X_t^2)^{1/2}} \; dt + \sigma \; dW_t + dL_t, \quad X_0 = x.
\]
Here, the drift parameter $\theta > 0$ and the diffusion coefficient $\sigma >0$ are unknown and we aim at estimating $\theta$ form discrete observations $X_{t_0},\ldots, X_{t_n}$ of $X$, where $t_i = i \Delta_n$ for $\Delta_n >0$ and $i=0,\ldots, n$. The driving L\'evy process $(L_t)_{t\geq 0}$ will be an $\alpha$-stable process with L\'evy-Khintchine triplet $(0,0,\nu)$ such that the L\'evy measure is of the form $\nu(dx) = dx/|x|^{1+\alpha}$.

From \eqref{eq:likelihood} we obtain an explicit pseudo MLE for $\theta$ in this model class given by
\[
 \hat \theta_t^{\text{hyp}} = -\frac{\int_0^t \frac{X_s}{(1+X_s^2)^{1/2}}dX_s^c}{\int_0^t \frac{X_s^2}{(1+X_s^2)}ds}.
\]
Via discretization and jump filtering this leads to the following drift estimator based on discrete observations:
\[
 \hat \theta_n^{\text{hyp}} = -\sum_{i=1}^n \frac{X_{t_i}}{(1+ X_{t_i}^2)^{1/2}}\Delta_i^n X \mathbf{1}_{|\Delta_i^n X| \leq v_n} \left(\sum_{i=1}^n \frac{X_{t_i}^2}{(1+ X_{t_i}^2)}  \right)^{-1}
\]
To assess the performance of $\hat \theta_n^{\text{hyp}}$ in Monte Carlo experiments we simulate discrete trajectories of $X$ via a Euler scheme with sufficient small step size. 

In Table \ref{table:HD} we give estimated mean and standard deviation of $\hat \theta_n^{hyp}$ from 500 Monte Carlo samples each for different observation length $t_n$ and number of observations $n$. We consider two different values for the index of stability $\alpha$ and give also the number of jumps that have been detected by the jump filter. It turns out that $\hat \theta_n^{hyp}$ performs remarkably well over the whole range of different setting even in the case $\alpha = 1$ of infinite variation jumps that is not covered by our theoretical results, since we have assumed that $\int_{\mathbb{R}} |x| \nu (dx) < \infty$. It might therefore be reasonable to expect that the convergence results presented here can be extended to jumps processes with Blumenthal-Getoor index $\alpha \geq 1$. 
\begin{table} 
\begin{tabular}{ c c c c c c c c }
 \multicolumn{2}{c}{} & \multicolumn{3}{c}{$\alpha$ = 0.5} & \multicolumn{3}{c}{$\alpha$ = 1} \\
 \toprule
  $t_n$ & $n$ & mean & std dev & jumps filt & mean & std dev & jumps filt\\
\midrule
  5 & 600 &1.7& 0.53  & 26& 1.6 &0.62 &37 \\
  
  & 1200 &1.9& 0.54  & 27& 1.8 &0.60 &40 \\

    & 1500 & 1.9& 0.57 & 26 & 1.9 & 0.66 &41  \\

 10 & 1000 &1.6 & 0.33 & 51 & 1.5 & 0.40 & 71 \\
 
 & 2000 & 1.8 & 0.34 & 53 & 1.7 & 0.38 & 79 \\
 
 & 4000 & 1.9 & 0.35  & 50 &1.9 & 0.43  & 85   \\

 20 & 2000 & 1.6 & 0.23  & 104 & 1.6 & 0.27 & 142  \\
 
 & 4000 & 1.8 & 0.24  & 106 & 1.7 & 0.28 & 158  \\
 
 & 8000 & 1.9 & 0.23  & 101 & 1.9 & 0.30 & 170 \\

\midrule
\end{tabular}
\caption{Monte Carlo estimates of mean and standard deviation from 500 samples of $\hat{\theta}_n^{\text{hyp}}$ for a hyperbolic diffusion process with Gaussian component and $\alpha$-stable jumps and true drift parameter $\theta = 2$}\label{table:HD}
\end{table}

\section{Proofs of main results}\label{sec:mainproofs}
\subsection{MLE for continuous observations}
Let $\htet $
be the true MLE maximizing
 the log-likelihood function  given by \eqref{eq:loglikelihood} and
based on continuous observations  :
\begin{equation}\label{eq:estimator} 
\htet \in \argmax_{\theta\in\Theta}\ell_t(\theta).
\end{equation}
Before moving to discrete observations we prove here some asymptotic
results for $\htet$. This is a first step in order to prove the asymptotic results for the
FMLE.



 

\begin{thm} \label{th: FMLE_consist}
Suppose that Assumptions \ref{ass:existence}--\ref{ass:ident} and \ref{ass:hoelder}(i) are satisfied. Then 
\[
\lim_{t\to\infty} \htet =\ts 
 \quad P-a.s.
 \] 
\end{thm}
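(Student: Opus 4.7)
Work under $P=P^{\ts}$, so that $dX_s^c = b(\ts,X_s)\,ds + \sigma(X_s)\,dW_s$. Substituting into \eqref{eq:likelihood}--\eqref{eq:loglikelihood} and dividing by $t$, the normalized log-likelihood ratio reads
\begin{equation*}
\phi_t(\theta) := \tfrac{1}{t}\bigl(\ell_t(\theta)-\ell_t(\ts)\bigr)
= -\tfrac{1}{2t}\int_0^t \frac{(b(\theta,X_s)-b(\ts,X_s))^2}{\sigma^2(X_s)}\,ds
+ \tfrac{1}{t}\int_0^t \frac{b(\theta,X_s)-b(\ts,X_s)}{\sigma(X_s)}\,dW_s.
\end{equation*}
Note that $\phi_t(\ts)=0$ and, by definition of $\htet$, $\phi_t(\htet)\geq 0$. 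The overall strategy is the standard M-estimator scheme: prove that $\phi_t(\theta)$ converges uniformly in $\theta\in\Theta$ to a deterministic limit $U(\theta)$ which is uniquely maximized at $\ts$, and then deduce consistency.

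Step 1 (pointwise convergence). Using Assumptions \ref{ass:existence} and \ref{ass:hoelder}(i), the integrand $(b(\theta,x)-b(\ts,x))^2/\sigma^2(x)$ has at most polynomial growth in $x$ (thanks to $\sigma^2\geq \alpha$ from Assumption \ref{ass:girsanov}), so Lemma \ref{lem:ergodic} yields
\begin{equation*}
\tfrac{1}{t}\int_0^t \frac{(b(\theta,X_s)-b(\ts,X_s))^2}{\sigma^2(X_s)}\,ds \xrightarrow[t\to\infty]{\text{a.s.}} V(\theta):=\int_{\R}\frac{(b(\theta,x)-b(\ts,x))^2}{\sigma^2(x)}\,d\pi^{\ts}(x).
\end{equation*}
The stochastic integral term is a continuous martingale with quadratic variation $\int_0^t(b(\theta,X_s)-b(\ts,X_s))^2/\sigma^2(X_s)\,ds$ growing linearly in $t$; the strong law of large numbers for continuous local martingales makes its division by $t$ vanish almost surely. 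Hence $\phi_t(\theta)\to U(\theta):=-V(\theta)/2$ a.s. for every fixed $\theta$.

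Step 2 (uniform convergence). Compactness of $\Theta$ combined with the Hölder assumption \ref{ass:hoelder}(i) gives
\begin{equation*}
|\phi_t(\theta)-\phi_t(\theta')| \leq |\theta-\theta'|^{\kappa}\Bigl(\tfrac{1}{t}\int_0^t H_s\,ds + \bigl|\tfrac{1}{t}\int_0^t G_s\,dW_s\bigr|\Bigr),
\end{equation*}
where $H_s$ and $G_s$ are explicit random variables built from $K(X_s)$, $b(\ts,X_s)$ and $\sigma(X_s)$, all of polynomial growth in $X_s$. By Lemma \ref{lem:ergodic} the first term is bounded in probability, and the same martingale argument as above makes the second term vanish. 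Combining this equicontinuity estimate with the pointwise convergence on a finite $\eps$-net of $\Theta$ (and the continuity of $U$, which follows from the same polynomial growth and ergodicity) yields $\sup_{\theta\in\Theta}|\phi_t(\theta)-U(\theta)|\to 0$ almost surely.

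Step 3 (identification and conclusion). Identifiability (Assumption \ref{ass:ident}, equivalent to \eqref{eq:idb}) ensures $U(\theta)<0$ for all $\theta\neq\ts$ and $U(\ts)=0$. Fix $\delta>0$: by continuity and compactness of the closed set $\{|\theta-\ts|\geq\delta\}\cap\Theta$ we obtain $\eta:=-\sup_{|\theta-\ts|\geq\delta}U(\theta)>0$. On the event $\{|\htet-\ts|\geq\delta\}$ the inequality $\phi_t(\htet)\geq 0$ yields $\sup_\theta|\phi_t(\theta)-U(\theta)|\geq \eta$, which has vanishing probability by Step 2. Hence $\htet\to\ts$ almost surely.

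The main technical hurdle is Step 2: turning pointwise a.s.\ convergence into a uniform statement requires controlling the supremum of the stochastic integral in $\theta$. The chaining via a finite net plus Hölder equicontinuity, together with uniform-in-$t$ moment bounds from the ergodic lemma, is what makes this step go through without extra regularity on $b$.
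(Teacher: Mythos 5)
Your overall architecture (normalize the log-likelihood ratio, ergodic theorem for the compensator, martingale LLN for the stochastic integral, Wald/M-estimation to conclude) is the same as the paper's, and Steps 1 and 3 are fine. The problem is the key inequality in Step 2. For the Lebesgue-integral part of $\phi_t(\theta)-\phi_t(\theta')$ the factorization $|\theta-\theta'|^{\kappa}\cdot\frac1t\int_0^t H_s\,ds$ is legitimate, but for the martingale part you are claiming a pathwise bound of the form
\begin{equation*}
\Bigl|\tfrac1t\int_0^t \frac{b(\theta,X_s)-b(\theta',X_s)}{\sigma(X_s)}\,dW_s\Bigr|\;\le\;|\theta-\theta'|^{\kappa}\,\Bigl|\tfrac1t\int_0^t G_s\,dW_s\Bigr|
\end{equation*}
with a single $G$ independent of $(\theta,\theta')$. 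Stochastic integrals admit no such domination: unlike $\int|\cdot|\,ds$, you cannot majorize $|\int g\,dW|$ by $\int|g|\,dW$ or by the integral of a pointwise dominating integrand, since the sign pattern of $b(\theta,X_s)-b(\theta',X_s)$ in $s$ can conspire with $dW_s$ to make the left side large on paths where the right side is small. So the asserted Hölder equicontinuity of $\phi_t$ is unproved, and with it the uniform a.s.\ convergence and the a.s.\ conclusion of the theorem.

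The correct handling of this term is exactly where the paper's proof does real work: the Hölder control is placed on the \emph{brackets}, namely $\langle M(\theta)-M(\theta')\rangle_t\le|\theta-\theta'|^{2\kappa}V_t$ with $V_t=\int_0^t(K^2(X_s)/\sigma^2(X_s)\vee 1)\,ds\to\infty$, and then the uniform strong law of large numbers for families of continuous local martingales (Theorem 2 of the cited reference \cite{LL}) yields $\sup_{\theta\in K}|M_t(\theta)/A_t(\theta)|\to 0$ a.s.\ on compacts $K$ not containing $\ts$, hence $\sup_{\theta\in K}|M_t(\theta)/t|\to 0$. An alternative repair along your lines would be a chaining argument from the moment bound $E|M_t(\theta)-M_t(\theta')|^p\le Ct^{p/2}|\theta-\theta'|^{\kappa p}$ (BDG plus the ergodic moment bounds, as the paper does for $A_{n,1}$ in Lemma \ref{lem:euler} via Ibragimov--Khasminskii), but that route naturally delivers uniform convergence in probability, not the almost sure statement claimed here, so it would prove only consistency in probability without additional effort.
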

%
%
%

\begin{proof}
Denote
\begin{equation}\label{eq:ellt}
\tilde\ell_t(\t):=\int_0^t\frac{(b(\t,X_s)-b(\ts,X_s))}{\sigma(X_s)}dW_s-\frac 12
\int_0^t\frac{(b(\t,X_s)-b(\ts,X_s))^2}{\sigma^2(X_s)}ds.
\end{equation}

Using \eqref{eq:Xsde} and the fact that the observed trajectory  corresponds to the true value of parameter $\ts$, we can easily see that

\[\ell_t(\t)=\tilde\ell_t(\t)+\frac 12\int_0^t\frac{b(\ts,X_s)}{\sigma(X_s)}dW_s+\frac12\int_0^t\frac{b^2(\ts,X_s)}{\sigma^2(X_s)}ds.\]
The difference between $\ell(\t)$ and $\tilde\ell_t(\t)$ does not depend on $\t$, hence also
\begin{equation}
\htet \in \argmax_{\theta\in\Theta}\tilde\ell_t(\theta).
\end{equation}
For $\t\in\Theta,$ define 
\[M_t(\t):=\int_0^t\frac{(b(\t,X_s)-b(\ts,X_s))}{\sigma(X_s)}dW_s.\]
The process $(M_t(\t), t\geq 0)$ is a continuous local martingale, with quadratic variation given by
\[
 A_t(\t):=<M(\t)>_t=\int_0^t\frac{(b(\t,X_s)-b(\ts,X_s))^2}{\sigma^2(X_s)}ds.\]
Note that
 \begin{equation}\label{eq: ell_t}
\tilde\ell_t(\t)=-\frac 12 A_t(\t)+M_t(\t),
\end{equation}

Recall that $\pi$, given by the Lemma \ref {lem:ergodic} is an invariant distribution of $X$ and denote 
\begin{equation}\label{eq:ell}
\tilde\ell(\t)=-\frac{1}{2}\pi\left (\frac{(b(\t,.)-b(\ts,.))^2}{\sigma^2(.)}\right)
\end{equation}
Using Assumptions \ref{ass:girsanov}, \ref{ass:hoelder}(i) and  Lemma \ref{lem:ergodic}(2),  we see that for all $\t\in\Theta,$ $\tilde\ell(\t)\in\R.$
Hence, using the Lemma \ref{lem:ergodic}(1) for all $ \t\in\Theta,$  
\begin{equation*}
\lim_{t\to\infty}-\frac 1{2t}A_t(\t)=\tilde\ell(\t)\quad\quad P-a.s.
\end{equation*}
Moreover, using again Assumptions \ref{ass:girsanov} and  \ref{ass:hoelder}(i)  we can see that the family
\begin{equation}\label{eq:equicont}
 \{\frac 1{t}A_t(\t)\}_{t>0} \mbox{ is equicontinuous }\ P-a.s.
 \end{equation}

 Indeed, 
\begin{equation*}\frac 1t|A_t(\t)-A_t(\t')|\leq C|\t-\t'|^{\kappa}\frac 1t\int_0^tK^2(X_s)ds,
\end{equation*}
where $C=2[Diam(\Theta)]^{\kappa}$ and $K$ given by the Assumptions \ref{ass:hoelder}(i) is sub-polynomial.
Using ergodic theorem, which holds thanks to the Lemma\ref{lem:ergodic}, $\frac 1t\int_0^tK^2(X_s)ds$ converges almost surely to some finite limit. Hence \eqref{eq:equicont} follows. As a consequence,
\begin{equation}\label{eq:uniflim}
\lim_{t\to\infty}\sup_{\t\in\Theta}\left |-\frac 1{2t}A_t(\t)-\tilde\ell(\t)\right|=0\quad\quad P-a.s.
\end{equation}
Denote \[A_t(\t,\t'):=<M_t(\t)-M_t(\t')>_t.\] 
 Using Assumptions \ref{ass:girsanov} and \ref{ass:hoelder}(i), for all $(\t,\t')\in\Theta^2,$
\[A_t(\t,\t')\leq |\t-\t'|^{2\kappa}V_t, \]
 where $V_t:=\int_0^t (\frac{K^2(X_s)}{\sigma^2(X_s)} \vee 1) ds\rightarrow\infty,\quad\mbox{if}\quad t\to\infty.$  Therefore all assumptions of the Theorem 2 in \cite {LL} are satisfied. As a conclusion, the family $\{\frac {M_t(\t)}{A_t(\t)}; \t\in\Theta, t\geq 0\}$ satisfies the Uniform Law of Large Numbers on any compact $K\in\Theta$ not containing $\ts,$ i.e. 
 \begin{equation*}
 \lim_{t\to\infty}\sup_{\t\in K}\left |\frac {M_t(\t)}{A_t(\t)}\right|=0
 \end{equation*}
We deduce, using \eqref{eq:uniflim}, that
 \begin{equation*}
 \lim_{t\to\infty}\sup_{\t\in K}\left |\frac {M_t(\t)}{t}\right|=0
 \end{equation*}
 and hence, $P-a.s.$ 
\begin{equation}\label{eq:critlimit}
\sup_{\t\in K}|t^{-1}\tilde\ell_t(\t)-\tilde\ell(\t)|\to 0.
\end{equation}
 We can now derive the a.s. consistency of $\htet$ following classical Wald's  method.
We  refer for  instance to  Theorem  5.7 in~\cite{Vaart}  for a  simple
presentation of Wald's  approach, and stress out the fact that all convergences and hence consistency holds $P$-a.s. in our setting.
%
Indeed, observe  that 
\begin{equation}\label {eq: inegell}
\tilde\ell(\t)\leq 0,\quad \tilde\ell(\t)=0\quad \Longleftrightarrow\quad \t=\ts
\end{equation}
and hence
\begin{equation}\label{eq:compar}
\sup_{\t:\ d(\t,\ts)\geq \eps} \tilde\ell(\t)<\tilde\ell(\ts)
\end{equation}
is trivially satisfied in our case.
We deduce from \eqref{eq:critlimit} and \eqref{eq:compar} that $P$-a.s. for all $\eps>0,$
\[ \lim_{t\to\infty}\sup_{d(\t,\ts)\geq\eps}\frac1t\tilde\ell_t(\t)<\tilde\ell(\ts)
\]
and hence for $t>t(\w)$ large enough
\[\sup_{d(\t,\ts)\geq\eps}\tilde\ell_t(\t)<\tilde\ell_t(\ts)
\]
 and finally for $t>t(\w)$,
 \[ d(\bar{\t}_t,\ts)<\eps,\]
 which means the a.s. consistency.
 \end{proof}

Recall that $I$ is the Fisher information given by \eqref{eq:fisher}.

The next result is a central limit theorem for the estimation error. 
It is  important for us in the sequel, since the asymptotic variance serves as a benchmark for the case of discrete observations. 
\begin{thm}\label{thm:cont_clt}
Suppose that Assumptions \ref{ass:existence}--\ref{ass:fischer} hold. Then the MLE $\bar{\theta}_{t}$
is asymptotically normal:
\[
t^{1/2}(\bar{\theta}_{t}-\ts)\stackrel{\L}{\to} \NN(0,I^{-1}(\ts))\quad \mbox{as}\quad t\to\infty.
\]
\end{thm}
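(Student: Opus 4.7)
The plan is a classical argument based on a Taylor expansion of the score equation, now that consistency is available from Theorem~\ref{th: FMLE_consist}. Since $\bar\theta_t$ lies in the interior of $\Theta$ for large $t$ (with probability tending to one), it satisfies $\nabla_\theta \tilde\ell_t(\bar\theta_t)=0$. A first-order Taylor expansion gives, componentwise,
\begin{equation*}
0 = \nabla_\theta \tilde\ell_t(\ts) + \left(\int_0^1 \partial_\theta^2 \tilde\ell_t(\ts + u(\bar\theta_t-\ts))\,du\right)(\bar\theta_t-\ts),
\end{equation*}
so that $\sqrt{t}(\bar\theta_t-\ts) = -\left(\tfrac{1}{t}\int_0^1 \partial_\theta^2\tilde\ell_t(\ts+u(\bar\theta_t-\ts))\,du\right)^{-1}\tfrac{1}{\sqrt{t}}\nabla_\theta\tilde\ell_t(\ts)$. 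The conclusion will follow from Slutsky, once one establishes (A) a CLT for the rescaled score at $\ts$ and (B) a (locally) uniform law of large numbers for the Hessian, together with its value at $\ts$.

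\textbf{Step 1 (score at $\ts$).} From \eqref{eq:ellt},
\begin{equation*}
\nabla_\theta \tilde\ell_t(\ts) = \int_0^t \frac{\nabla_\theta b(\ts,X_s)}{\sigma(X_s)}\, dW_s,
\end{equation*}
which is a continuous $d$-dimensional martingale with quadratic covariation $t\cdot \tfrac{1}{t}\int_0^t \sigma(X_s)^{-2}\nabla_\theta b(\ts,X_s)\nabla_\theta b(\ts,X_s)^\top ds$. By ergodicity (Lemma~\ref{lem:ergodic}), combined with Assumptions~\ref{ass:girsanov} and \ref{ass:subpolynom} (so that the integrand has finite $\pi$-integral), this bracket converges a.s.\ to $t\cdot I(\ts)$ up to the factor $t$; hence, after dividing by $\sqrt t$, the normalized bracket converges to $I(\ts)$. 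Applying the martingale central limit theorem (e.g.\ Jacod--Shiryaev, Thm.~VIII.3.11; the Lindeberg/Lyapunov condition is immediate since the driving process is continuous) yields
\begin{equation*}
\frac{1}{\sqrt{t}} \nabla_\theta \tilde\ell_t(\ts) \xrightarrow{\L} \NN(0, I(\ts)).
\end{equation*}

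\textbf{Step 2 (Hessian).} Differentiating \eqref{eq:ellt} twice,
\begin{equation*}
\partial_\theta^2 \tilde\ell_t(\theta) = \int_0^t \frac{\partial_\theta^2 b(\theta,X_s)}{\sigma(X_s)}\, dW_s - \int_0^t \frac{(b(\theta,X_s)-b(\ts,X_s))\,\partial_\theta^2 b(\theta,X_s)}{\sigma^2(X_s)}\, ds - \int_0^t \frac{\nabla_\theta b(\theta,X_s)\nabla_\theta b(\theta,X_s)^\top}{\sigma^2(X_s)}\, ds.
\end{equation*}
At $\theta=\ts$ the middle term vanishes. The stochastic integral, divided by $t$, has quadratic bracket $O(1/t)$ by ergodicity and Assumption~\ref{ass:subpolynom}, so it converges to $0$ in probability. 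The last term, divided by $t$, converges a.s.\ to $-I(\ts)$ by Lemma~\ref{lem:ergodic}. Thus $\tfrac{1}{t}\partial_\theta^2\tilde\ell_t(\ts) \to -I(\ts)$ in probability.

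\textbf{Step 3 (local uniformity and conclusion).} Because $\bar\theta_t \to \ts$ a.s., it suffices to replace $\partial_\theta^2\tilde\ell_t$ at $\ts$ by its value on a shrinking neighborhood. Using Assumption~\ref{ass:hoelder}(ii) (Hölder continuity of $\partial_\theta^2 b$ with controlling function $K_2$ of polynomial growth) and Lemma~\ref{lem:ergodic} to bound $\tfrac{1}{t}\int_0^t K_2^2(X_s)(1+b(\ts,X_s)^2)\sigma^{-2}(X_s)\,ds$, one gets
\begin{equation*}
\sup_{|\theta-\ts|\leq \delta_t} \left\|\tfrac{1}{t}\partial_\theta^2\tilde\ell_t(\theta) - \tfrac{1}{t}\partial_\theta^2\tilde\ell_t(\ts)\right\| = O_P(\delta_t^{\kappa_2}) + O_P(\delta_t^{\kappa}),
\end{equation*}
for any $\delta_t\to 0$. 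Taking $\delta_t = |\bar\theta_t-\ts|$ and invoking Step~2 yields $\tfrac{1}{t}\int_0^1 \partial_\theta^2\tilde\ell_t(\ts+u(\bar\theta_t-\ts))\,du \xrightarrow{P} -I(\ts)$. By Assumption~\ref{ass:fischer} this limit is invertible, and Slutsky combined with Step~1 gives
\begin{equation*}
\sqrt{t}(\bar\theta_t-\ts) \xrightarrow{\L} I(\ts)^{-1}\,\NN(0,I(\ts)) = \NN(0,I(\ts)^{-1}).
\end{equation*}

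The main obstacle is the local uniformity argument in Step~3: one must carefully combine the Hölder moduli of $\nabla_\theta b$ and $\partial_\theta^2 b$ with the ergodic control of the moments of $X_s$ to get an $O_P$ bound for the Hessian that is uniform in a $P$-shrinking ball around $\ts$. The score CLT in Step~1 is essentially an application of Rebolledo's theorem and is the easier half.
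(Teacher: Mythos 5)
Your proposal follows essentially the same route as the paper: Taylor expansion of the score equation around $\ts$, the martingale CLT for $t^{-1/2}\nabla_\theta\tilde\ell_t(\ts)$ with bracket converging to $I(\ts)$ by the ergodic theorem, convergence of the normalized Hessian to $-I(\ts)$, and Slutsky. The one step where your argument as written has a gap is the uniform control in Step~3: the Hessian contains the stochastic-integral term $\int_0^t \sigma(X_s)^{-1}\partial^2_\theta b(\theta,X_s)\,dW_s$, and the pathwise H\"older bound via $K_2$ (which works fine for the two Lebesgue-integral terms) does not directly bound the supremum over a ball of a family of stochastic integrals; you would need either a chaining/Kolmogorov continuity argument in $\theta$ with $p>d/\kappa_2$ moments (as the paper does elsewhere via BDG and Ibragimov--Khasminskii), or, as the paper's proof actually does, the uniform law of large numbers for local martingales of Levanony--Lindqvist to get $\sup_{\theta\in\Theta} t^{-1}\bigl|\int_0^t \sigma(X_s)^{-1}\partial^2_\theta b(\theta,X_s)\,dW_s\bigr|\to 0$ globally, after which only the equicontinuity of the deterministic-integral terms is needed. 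With that repair (and the standard device of replacing the random radius $\delta_t=|\bar\theta_t-\ts|$ by a deterministic one on an event of probability tending to one), your proof is complete and matches the paper's; the paper simply prefers global uniformity over $\Theta$ to your local uniformity near $\ts$, which makes no essential difference.
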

\begin{proof}
Due to Assumptions \ref{ass:girsanov} and \ref{ass:hoelder}, Theorem 2.2 in \cite{hutton} and Theorem 1 in \cite{LL} for all $t>0$ the criterion function $\tilde\ell_t(\t, X)$ is twice continuously differentiable in $\theta.$

From (\ref{eq:ellt}) the score function can be written as $\nabla_{\t}\ell=\nabla_{\t}\tilde\ell=(\partial_{\t_1}{\tilde\ell}_{t},\ldots, \partial_{\t_d}{\tilde\ell}_{t})^T$ where
\begin{equation}\label{eq:partialel}
\partial_{\t_i}{\tilde\ell}_{t}(\theta)=-\int_{0}^{t}\frac{(b(\theta,X_s)-b(\theta^{{\star}},X_s))\partial_{\t_i}{b}(\theta,X)}{\sigma^{2}(X_s)}ds+\int_{0}^{t}\frac{\partial_{\t_i}{b}(\theta,X_s)}{\sigma(X_s)}dW_{s},
\end{equation}
for $i=1,\ldots, d.$
 A Taylor expansion around
$\bar{\theta}_t$ yields
\begin{equation}\label{eq:like_Taylor}
\int_0^1\frac1t\partial^2_{\t}\tilde\ell_t(\ts+s(\bar \t_t-\ts))ds\times\sqrt t(\bar\t_t-\ts)=- \frac1{\sqrt t}\nabla_{\t}\tilde\ell_t(\ts).
\end{equation}
 Hence,
to obtain a CLT for the estimation error $t^{1/2}(\bar{\theta}_{t}- \ts)$
we will first show the convergence of the right hand side in (\ref{eq:like_Taylor}).
The equation \eqref{eq:partialel} gives for  $\theta=\theta^{{\star}}$ 
\[
\nabla_{\t}{\tilde\ell}_{t}(\theta^{{\star}})=\int_{0}^{t}\frac{\nabla_{\t}{b}(\theta^{{\star}},X_s)}{\sigma(X_s)}dW_{s}
\]
such that the central limit theorem for multidimensional local martingales \cite{KuSo}
gives 
\begin{equation}
\label{eq:norm_ctn_score}
t^{-1/2}\nabla_{\t}{\tilde\ell}_{t}(\theta^{{\star}})=t^{-1/2}\int_{0}^{t}\frac{\nabla_{\t}{b}(\theta^{{\star}},X)}{\sigma(X_s)}dW_{s}\stackrel{\L}{\to} \NN(0,I).
\end{equation}
In the next step we prove the convergence of 
\begin{equation*} 
\int_0^1\frac1t \partial^2_{\t}\tilde\ell_t(\ts+s(\bar \t_t-\ts))ds.
\end{equation*}
From \eqref{eq:partialel} we see that for $(i,j) \in \{1,\dots,d\}$,
\begin{align} 
\nonumber
\partial^2_{\t_i\t_j}{\tilde\ell}_{t}(\theta) & =-\int_{0}^{t}\frac{(b(\theta,X_s)-b(\theta^{{\star}},X_s))\partial^2_{\t_i,\t_j}{b}(\theta,X_s)}{\sigma^{2}(X_s)}ds-\int_{0}^{t}\frac{\partial_{\t_i}{b}(\theta,X_s)\partial_{\t_j}{b}(\theta,X_s)}{\sigma^2(X_s)}ds
\\ \quad \nonumber
 &+\int_{0}^{t}\frac{\partial^2_{\t_i\t_j}{b}(\theta,X_s)}{\sigma(X_s)}dW_{s}
 \\ & 
 \label{eq:intpp}
 := U_{t}^{1}(\t)+U_{t}^{2}(\t)+U_{t}^{3}(\t).
\end{align}
Using the ergodic theorem, $P$-a.s.
\[ \frac 1t U_t^1(\t)\to
U^1_\infty(\theta):=-
\int_{\R} \frac{(b(\theta,x)-b(\theta^{{\star}},x))\partial^2_{\t_i\t_j}b(\theta,x)}{\sigma^{2}(x)}\pi(dx)
;\]
\[\frac 1t U_t^2(\t)\to
U^2_\infty(\theta):=-
\int_{\R}\frac{ \partial_{\t_i}{b}(\theta,x)\partial_{\t_j}{b}(\theta,x)              }{\sigma^2(x)}\pi(dx)
=-I_{i,j}(\t).\]
Moreover, using 
 Assumption \ref{ass:hoelder} and \ref{ass:subpolynom} and the same argument which were used to prove the equicontinuity \eqref{eq:equicont} we obtain that
 the families of functions
$( \t \mapsto \frac{U^1_t}{t}(\t))_{t \ge 0}$ and  $( \theta \mapsto \frac{U^2_t}{t}(\t))_{t \ge 0}$ are almost surely equicontinuous. 
Finally, the uniform  law of large numbers  for local martingales \cite{LL} together with Assumptions \ref{ass:girsanov} ,\ref{ass:hoelder} and  \ref{ass:subpolynom}  gives that $P$-a.s.
\[
\sup_{\t\in\Theta}t^{-1}|U_{t}^{3} (\t)|=\sup_{\t\in\Theta}t^{-1}|\int_{0}^{t}\frac{\partial^2_{\t}{b}(\theta,X_s)}{\sigma(X_s)}dW_{s}|
{\to}0
\]

Using \eqref{eq:intpp} and the four last displays we obtain $P$-a.s.
\begin{equation}
\sup_{\theta\in\Theta}\left|t^{-1}\partial^2_{\t}{\tilde\ell}_{t}(\theta)-(U^1_{\infty}(\t)-I(\t))\right|{\to}0\label{eq:lddot2}
\end{equation}
 Using this uniformity together with a.s. convergence $\bar{\theta}_{t}\to\theta^{{\star}}$  we get $P$-a.s.

\begin{equation*}
\sup_{s \in [0,1]}\left|t^{-1}\partial^2_{\t}{\tilde\ell}_{t}(\ts+s(\bar \t_t-\ts))-(-I(\ts))\right|{\to}0
\end{equation*}

%
%
%
%
and
\begin{equation}
\label{eq:cv_intlpp}
t^{-1} \int_0^1\partial^2_{\t}{\tilde\ell}_{t}(\ts+s(\bar \t_t-\ts))ds
{\to}-I(\ts).
\end{equation}

Finally, from the non-degeneracy of the Fisher information matrix $I(\ts)$, \eqref{eq:norm_ctn_score}, \eqref{eq:cv_intlpp}, and Slutsky's theorem, we deduce the asymptotic normality of the estimator.
\end{proof}
\subsection{Local asymptotic normality and efficiency}
To obtain an asymptotic efficiency result in the sense of Hàjek-Le Cam's convolution theorem we prove now the local asymptotic normality property for the  
statistical experiment $(\Omega,\mathcal{F},(\mathcal{F}_{t}),\mathcal{P})$.
 From this result we can then deduce later on efficiency of the discretized estimator with jump filter (cf. Theorem \ref{thm:anfinact} and \ref{thm:aninfinact}).
\begin{thm}\label{thm:LAN}
Suppose that Assumptions \ref{ass:existence} to \ref{ass:fischer} are satisfied. Then 
 the family $(P^{\t})_{\t\in\Theta}$ is locally asymptotically normal. That is, for all $h \in \mathbb{R}^d$, we have the convergence in distribution under $P$,
\begin{equation}\label{al:LAN}
\ell_t(\ts+\frac h{\sqrt t})-\ell_t(\ts) \stackrel{\L}{\to} -1/2 h^{\top} I(\ts) h + N,\quad \mbox{as}\quad t\to\infty,
\end{equation} 
where $N \sim \mathcal{N}(0, h^{\top} I(\ts) h)$.
 As a consequence the drift estimator $\bar{\theta}_{t}$ is asymptotically efficient
in the sense of the Hájek-Le Cam convolution theorem.

\end{thm}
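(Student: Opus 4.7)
The plan is to derive the LAN expansion by a second-order Taylor development of the continuous log-likelihood around $\ts$, reusing the score and Fisher information computations already produced in the proof of Theorem \ref{thm:cont_clt}, and then to deduce efficiency of $\bar\theta_t$ from the H\'ajek-Le Cam convolution theorem.

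First, since $\ell_t(\theta)=\tilde\ell_t(\theta)+R_t$ with $R_t$ independent of $\theta$ and $\tilde\ell_t(\ts)=0$, the log-likelihood ratio reduces to $\tilde\ell_t(\ts+h/\sqrt t)$. Twice continuous differentiability of $\theta\mapsto\tilde\ell_t(\theta)$ has already been noted in the proof of Theorem \ref{thm:cont_clt}, so a Taylor expansion with mean-value remainder along the segment $[\ts,\ts+h/\sqrt t]$ yields, for some random $\tilde\theta_t$ on this segment,
\begin{equation*}
\ell_t\!\left(\ts+\frac{h}{\sqrt t}\right)-\ell_t(\ts)
= \frac{1}{\sqrt t}\,h^{\top}\nabla_{\theta}\tilde\ell_t(\ts)
+ \frac{1}{2t}\,h^{\top}\partial^{2}_{\theta}\tilde\ell_t(\tilde\theta_t)\,h.
\end{equation*}

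For the linear term, the multidimensional martingale CLT \eqref{eq:norm_ctn_score} delivers, under $P$,
\begin{equation*}
t^{-1/2}\,h^{\top}\nabla_{\theta}\tilde\ell_t(\ts)\stackrel{\L}{\to}\NN\!\left(0,\,h^{\top}I(\ts)h\right).
\end{equation*}
For the quadratic term, the uniform $P$-a.s. convergence \eqref{eq:lddot2} states that $t^{-1}\partial^{2}_{\theta}\tilde\ell_t(\theta)\to U^{1}_{\infty}(\theta)-I(\theta)$ uniformly on $\Theta$. At $\theta=\ts$ the factor $b(\ts,x)-b(\ts,x)$ vanishes, so $U^{1}_{\infty}(\ts)=0$ and the limit equals $-I(\ts)$. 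Since $\tilde\theta_t\to\ts$ in $P$-probability (because $h/\sqrt t\to 0$), and since $\theta\mapsto U^{1}_{\infty}(\theta)-I(\theta)$ is continuous at $\ts$ under Assumptions \ref{ass:hoelder} and \ref{ass:subpolynom} together with the moment bounds of Lemma \ref{lem:ergodic} (which legitimize dominated convergence under $\pi$), I would conclude $t^{-1}\partial^{2}_{\theta}\tilde\ell_t(\tilde\theta_t)\to-I(\ts)$ in $P$-probability. Slutsky's lemma then combines the two pieces into \eqref{al:LAN} with $N\sim\NN(0,h^{\top}I(\ts)h)$.

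Efficiency is then immediate: the H\'ajek-Le Cam convolution theorem applied to the now-established LAN family with Fisher information $I(\ts)$ forces the asymptotic law of any regular estimator to be a convolution of $\NN(0,I^{-1}(\ts))$ with some other distribution. Theorem \ref{thm:cont_clt} already shows that $\bar\theta_t$ attains exactly $\NN(0,I^{-1}(\ts))$, so no additional convolution factor can arise and $\bar\theta_t$ is efficient. The only step requiring care is transferring the uniform convergence \eqref{eq:lddot2} through the random point $\tilde\theta_t$; this reduces to continuity of the deterministic limit $U^{1}_{\infty}-I$ at $\ts$, and I expect this to cause no real difficulty given the polynomial-growth and ergodicity bounds already in force.
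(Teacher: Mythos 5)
Your proof is correct, but it follows a genuinely different decomposition from the paper's. You perform a second-order Taylor expansion of $\tilde\ell_t$ in $\theta$ with a Lagrange-form remainder and then recycle two facts already established in the proof of Theorem \ref{thm:cont_clt}: the martingale CLT for the score \eqref{eq:norm_ctn_score} and the uniform almost-sure convergence of the normalized Hessian \eqref{eq:lddot2}, evaluated at the intermediate point via continuity of the limit $U^1_\infty-I$ at $\ts$ (where $U^1_\infty(\ts)=0$). The paper never differentiates $\tilde\ell_t$ twice for this theorem: it writes $b(\ts+h/\sqrt t,\cdot)-b(\ts,\cdot)$ as an exact first-order integral of $\nabla b$, so the quadratic term becomes a double integral $\int_0^1\int_0^1$ of gradient cross-products converging to $h^{\top}I(\ts)h$ by the ergodic theorem, while the martingale term splits into $t^{-1/2}\int_0^t \nabla b^{\top}(\ts,X_s)h\,\sigma(X_s)^{-1}dW_s$ plus an explicit remainder $R_t$ killed by a variance bound using the H\"older continuity of $\nabla b$. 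Your route is more economical because it reuses \eqref{eq:lddot2} wholesale and needs no separate control of a martingale remainder; the paper's route is more self-contained and needs only first-derivative regularity of $b$ in $\theta$ for the LAN expansion itself. Two small points to tidy in your version: the Lagrange point $\tilde\theta_t$ need not be measurable, so either use the integral form of the remainder (as the paper does in \eqref{eq:like_Taylor}) or bound the Hessian term by a supremum over the shrinking ball $B(\ts,|h|/\sqrt t)$, which is legitimate precisely because \eqref{eq:lddot2} is uniform; and one should note that $\ts+h/\sqrt t\in\Theta$ for $t$ large since $\ts$ is an interior point, so the expansion is well defined. Neither point affects validity, and your efficiency conclusion via the convolution theorem is at the same level of detail as the paper's.
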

\begin{proof}
%
\begin{align*}
&\ell_t(\ts+\frac h{\sqrt t})-\ell_t(\ts)=-\frac 12\int_0^t\frac {(b(\ts+\frac h{\sqrt t},\Xs)-b(\ts, \Xs))^2ds}{\sigma^2(X_s)} \\ &+\int_0^t\frac {(b(\ts+\frac h{\sqrt t},\Xs)-b(\ts, \Xs))}{\sigma(\Xs)}dW_s\\
&=-\frac {1}{2}\int_0^1\int_0^1\left (  \frac 1t\int_0^t
\frac{h^{\top}(\nabla b(\ts+\frac{hu}{\sqrt t},\Xs)\nabla b^{\top}(\ts+\frac{hu'}{\sqrt t},\Xs)h}{\sigma^2(\Xs)}ds\right)dudu' \\
& + \frac 1{\sqrt t}\int_0^t\frac{\nabla b^T(\ts,\Xs)h}{\sigma(\Xs)}dW_s+R_t.
\end{align*}
Where
\[R_t:=\int_0^t\frac {(b(\ts+\frac h{\sqrt t},\Xs)-b(\ts, \Xs))}{\sigma(\Xs)}dW_s-\frac 1{\sqrt t}\int_0^t\frac{\nabla b^T(\ts,\Xs)h}{\sigma(\Xs)}dW_s.\]
Using  Assumption \ref{ass:hoelder} and the ergodic theorem,
for all fixed $r>0, r'>0$ such that $\ts+r\in\Theta$, $\ts+r'\in\Theta$  we obtain 
\[\lim_{t\to\infty} \frac 1t\int_0^t
\frac{h^{\top}\nabla b(\ts+r,\Xs)\nabla b^{\top}(\ts+r',\Xs)h}{\sigma^2(\Xs)}ds=\int_{\R}\frac{h^{\top}\nabla b(\ts+r,x)\nabla b^{\top}(\ts+r',x)h}{\sigma^2(\Xs)}d\pi(x)\] 
 $P$-a.s. and Assumption \eqref{ass:subpolynom} and Lemma \ref{lem:ergodic} imply that this last limit is finite. Moreover, using Assumption \ref{ass:hoelder} it can be shown that this convergence is uniform, hence for $hu/\sqrt t \to 0$ it gives that $P-a.s.$
\begin{multline} \label{eq:info_lan}
\lim_{t\to\infty}\int_0^1du\int_0^1du' \frac {1}{t}\int_0^t
\frac{h^{\top}\nabla b(\ts+\frac{hu}{\sqrt t},\Xs)\nabla b^{\top}(\ts+\frac{hu'}{\sqrt t},\Xs)h}{\sigma^2(\Xs)}
ds\\=\int_{\R}\frac{h^{\top}\nabla b(\ts,x)\nabla b^{\top}(\ts,x)h}{\sigma^2(\Xs)}d\pi(x)=h^{\top}I(\ts)h.
\end{multline} 

Using Markov inequality
\begin{equation}\label{eq:remainder_lan}
P(|R_t|\geq \eps)\leq \frac{Var R_t}{\eps^2}\leq \frac {\|h\|^2}{\eps^2}\frac 1t\int_0^t\left(\frac {\|h\|}{\sqrt t}\right)^{2\kappa}E\left(\frac {K_1^2(\Xs)}{\sigma^2(\Xs)}\right)ds,
\end{equation}
where $K_1$ is a Holder constant of $\nabla b$ is supposed to be at most of polynomial growth. Using ergodic theorem in mean, we obtain  $R_t\to 0$ in $P$ probability.

Due to the CLT for martingales in \cite{KuSo} 
\[
\frac 1{\sqrt t}\int_0^t\frac{\nabla b^{\top}(\ts,\Xs)h}{\sigma(\Xs)}dW_s\to { \NN}(0, h^{\top}I(\ts)h)
\]
in distribution.
Combining the latter equation with \eqref{eq:info_lan}--\eqref{eq:remainder_lan}, we obtain \eqref{al:LAN}. This implies together with Theorem \ref{thm:cont_clt} that $\bar{\t}_t$ is asymptotically efficient in the 
sense of the Hájek-Le Cam convolution theorem.
\end{proof}

\subsection{Proofs of Theorems \ref{thm:tnconsitency}, \ref{thm:anfinact} and \ref{thm:aninfinact}.}

\begin{proof}[Proof of Theorem \ref{thm:tnconsitency}]
Let $\tilde\ell:\Theta\to \R$ be given by \eqref{eq:ell} and 
define\begin{equation}\label{def:ell}
\ell(\t)=\tilde \ell(\t)+\frac 12\pi\left(\frac{b^2(\ts,x)}{\sigma^2(x)}\right).
\end{equation}
Under Assumptions \ref{ass:existence} and \ref{ass:girsanov}  the last term in the right hand side of \eqref{def:ell} is finite.
We will apply Wald's method for proving consistency of $M$ estimators (see for example Theorem 5.7 in \cite{Vaart}). It follows from \eqref {eq: inegell} that
\begin{equation}
\sup_{\t;\  d(\t,\ts)\geq \eps}\ell(\t)\leq \ell(\ts). 
\end{equation}
 Therefore, it remains to prove that 

\begin{equation*}
\lim_{n\to\infty}\sup_{\t\in\Theta}|t_n^{-1}\ell^n_{t_n}(\t)-\ell(\t)|= 0\quad \text{in probability}.
\end{equation*}
To obtain this last statement we decompose this difference as follows:
\begin{equation}\label{eq:condecomp}
\sup_{\theta\in\Theta}|\ell(\theta)-t_{n}^{-1}\ell_{t_{n}}^{n}(\theta)|\leq\sup_{\theta\in\Theta}|\ell(\theta)-t_{n}^{-1}\ell_{t_{n}}(\theta)|+\sup_{\theta\in\Theta}|t_{n}^{-1}(\ell_{t_{n}}(\theta)-\ell_{t_{n}}^{n}(\theta))|.
\end{equation}

 Using respectively the Ergodic Theorem given by Lemma \ref{lem:ergodic} (1) and the Law of Large Numbers for continuous local martingales (\cite{RY} p.178) we see that a.s.
\[\frac1t\int_0^t\frac{b^2(\ts,X_s)}{\sigma^2(X_s)}ds\to \pi\left(\frac{b^2(\ts,x)}{\sigma^2(x)}\right)\] 
and 
\[\frac 1t\int_0^t\frac{b(\ts,X_s)}{\sigma(X_s)}dW_s\to 0\]Using these two last display and \eqref{eq:critlimit} we see that the first term of the decomposition \eqref{eq:condecomp}
 tends to zero $P$-a.s..
In order to show the  convergence to zero in probability of the second term, we decompose it as follows.
\begin{align*}
 & \sup_{\t\in\Theta}\left |t_{n}^{-1}(\ell_{t_{n}}(\theta)-\ell_{t_{n}}^{n}(\theta))\right |\\
 & \leq \sup_{\t\in\Theta}t_{n}^{-1}\left |\int_{0}^{t_{n}}\sigma(X_{s})^{-2}b(\theta,X_{s})\; dX_{s}^{c}-\sum_{i=1}^{n}\sigma(X_{t_{i-1}})^{-2}b(\theta,X_{t_{i-1}})\Delta_{i}^nX\mathbf{1}_{|\Delta_{i}^nX|\leq v_n}\right |\\
 & +\sup_{\t\in\Theta}t_{n}^{-1}\left |\frac{1}{2}\int_{0}^{t_{n}}\sigma(X_{s})^{-2}b(\theta,X_{s})^{2}\; ds-\frac{1}{2}\sum_{i=1}^{n}\sigma(X_{t_{i-1}})^{-2}b(\theta,X_{t_{i-1}})^{2}\Delta_{i}^nId\right |\\
 & =\sup_{\t\in\Theta}t_{n}^{-1}|A_{n}^{1}(\t)|+\sup_{\t\in\Theta}t_{n}^{-1}|A_{n}^{2}(\t)|.
\end{align*}
Hence, it remains to prove the convergence to zero of $t_n^{-1}|A_{n}^{1}(\t)|$ and $t_n^{-1}|A_{n}^{2}(\t)|$ uniformly in $\theta$. 
For $t_n^{-1}|A_{n}^{1}(\t)|$ we apply Proposition \ref{prop:jumpfiltering} in the finite activity case and Proposition \ref{prop:jumpfiltering_infinit}  in the case of infinite activity, together with the fact that $n \Delta_n=O(t_n)$. Indeed, using Assumption \ref{ass:hoelder} and \ref{ass:subpolynom} we see that the function $f(\t,x)=\sigma(x)^{-2}b(\theta,x)^2$
satisfies all assumptions of Propositions \ref{prop:jumpfiltering} or \ref{prop:jumpfiltering_infinit}.
For the second term $t_n^{-1}|A_{n}^{2}(\t)|$ we use Lemma \ref{lem:Riemann_app}.
\end{proof}
\begin{proof}[Proof of Theorem \ref{thm:anfinact}]
 A Taylor expansion around
$\hat{\theta}_n$ yields
\begin{equation}\label{eq:like_ATaylor}
\frac1{t_n}\int_0^1\partial^2_{\t}\ell^n_{t_n}(\ts+s(\hat \t_n-\ts))ds\times t_n^{1/2}(\hat\t_n-\ts)=- \frac1{ t_n^{1/2}}\nabla_{\t}\ell^n_{t_n}(\ts).
\end{equation}
 For the right hand side  we find that
\begin{equation}
\frac1{{ t_n}^{1/2}}\nabla_{\t}\ell^n_{t_n}(\ts)=\frac{\nabla_{\t}{\ell}_{t_{n}}^{n}(\ts)-\nabla_{\t}{\ell}_{t_{n}}(\ts)}{\sqrt{t_{n}}}+\frac{\nabla_{\t}{\ell}_{t_{n}}(\ts)}{\sqrt{t_{n}}}.\label{eq:l_dot}
\end{equation}
By  \eqref{eq:norm_ctn_score} we have that under $P$
\begin{equation}
\frac{\nabla_{\t}{\ell}_{t_{n}}(\ts)}{\sqrt{t_{n}}}\stackrel{\L}{\to}N(0,I(\ts)),\quad n\to\infty.\label{eq:ldot_norm}
\end{equation}
The first term of the sum on the right hand side of (\ref{eq:l_dot}) has the
form
\begin{align*}
 & \frac{\nabla_{\t}{\ell}_{t_{n}}^{n}(\ts)-\nabla_{\t}{\ell}_{t_{n}}(\ts)}{t_{n}^{1/2}}\\
 & =-t_{n}^{-1/2}\left(\int_{0}^{t_{n}}\sigma(X_{s})^{-2}\nabla_{\t}{b}(\ts,X_{s})\; dX_{s}^{c}-\sum_{i=1}^{n}\sigma(X_{t_{i-1}})^{-2}\nabla_{\t}{b}(\ts,X_{t_{i-1}})\Delta_{i}^nX\mathbf{1}_{|\Delta_{i}^nX|\leq v_n}\right)\\
 & +t_{n}^{-1/2}\frac{1}{2}\left(\int_{0}^{t_{n}}\sigma(X_{s})^{-2}\nabla_{\t}{b}(\ts,X_{s})^{2}\; ds-\sum_{i=1}^{n}\sigma(X_{t_{i-1}})^{-2}\nabla_{\t}{b}(\ts,X_{t_{i-1}})^{2}\Delta_{i}^nId\right).
\end{align*}
By applying Proposition \ref{prop:jumpfiltering} for $k=1,\ldots d$ with $f_k(\ts,x)=\sigma(x)^{-2}\partial_{\t_k}{b}(\ts,x)$, and using Assumptions
\ref{ass:hoelder}-- \ref{ass:subpolynom}
we obtain that

\[t_{n}^{-1/2}\left(\int_{0}^{t_{n}}\sigma(X_{s})^{-2}\partial_{\t_k}{b}(\ts,X_{s})\; dX_{s}^{c}-\sum_{i=1}^{n}\sigma(X_{t_{i-1}})^{-2}\partial_{\t_k}{b}(\ts,X_{t_{i-1}})\Delta_{i}^nX\mathbf{1}_{|\Delta_{i}^nX|\leq v_n}\right)\stackrel{P}{\longrightarrow} 0
\]
as $n\to\infty$.  Furthermore, Lemma \ref{lem:Riemann_app} (ii) leads
to 

\begin{equation*}
t_{n}^{-1/2}\left(\int_{0}^{t_{n}}\sigma(X_{s})^{-2}\partial_{\t_k}{b}(\ts,X_{s})^{2}\; ds-\sum_{i=1}^{n}\sigma(X_{t_{i-1}})^{-2}\partial_{\t_k}{b}(\ts,X_{t_{i-1}})^{2}\Delta_{i}^nId\right)
\stackrel{P}{\longrightarrow} 0,
\end{equation*}
as $n\to\infty.$
Combining now the last three displays results in
\[
\frac{\nabla_{\t}{\ell}_{t_{n}}^{n}(\ts)-\nabla_{\t}{\ell}_{t_{n}}(\ts)}{t_{n}^{1/2}}\stackrel{P}{\longrightarrow}0
\]
such that (\ref{eq:l_dot}) and (\ref{eq:ldot_norm}) give
\[
t_{n}^{1/2}\nabla_{\t}{\ell}_{t_{n}}^{n}(\ts)\stackrel{d}{\to}N(0,I(\ts)),\quad n\to\infty.
\]
To finish the proof it remains to show the convergence of the left hand side in \eqref {eq:like_ATaylor}.
For $(j,k)\in\{1,\ldots,d\}^2$ and $\t\in\Theta,$ 
\begin{align*}
 & t_{n}^{-1}\sup_{\theta\in\Theta}\left|\left(\partial^2_{\t_j\t_k}{\ell}_{t_{n}}^{n}(\theta)-\partial^2_{\t_j\t_k}{\ell}_{t_{n}}(\theta)\right)\right|\\
 & \leq t_{n}^{-1}\sup_{\theta\in\Theta}\left|\int_{0}^{t_{n}}\sigma(X_{s})^{-2}\partial^{ 2}_{\t_j\t_k}{b}(\theta,X_{s})\; dX_{s}^{c}-\sum_{i=1}^{n}\sigma(X_{t_{i-1}})^{-2}\partial_{\t_j\t_k}^{ 2}{b}(\theta,X_{t_{i-1}})\Delta_{i}^nX\mathbf{1}_{|\Delta_{i}^nX|\leq v_n}\right|\\
 & +t_{n}^{-1}\sup_{\theta\in\Theta}\left|\int_{0}^{t_{n}}{{\sigma(X_s)^{-2}}\partial_{\t_j}{b}(\theta,X_s)\partial_{\t_k}{b}(\theta,X_s)}\; ds-\sum_{i=1}^{n}\sigma(X_{t_{i-1}})^{-2}{\partial_{\t_j}{b}(\theta,X_s)\partial_{\t_k}{b}(\theta,X_s)}\Delta_{i}^nId\right|\\
 &+
  t_{n}^{-1}\sup_{\theta\in\Theta}\left|\int_{0}^{t_{n}}{{\sigma(X_s)^{-2}}\partial^2_{\t_j\t_k}{b}(\theta,X_s){b}(\theta,X_s)}\; ds-\sum_{i=1}^{n}\sigma(X_{t_{i-1}})^{-2}{\partial^2_{\t_j\t_k}{b}(\theta,X_s){b}(\theta,X_s)}\Delta_{i}^nId\right|
\\
 & =U_{n}^{1}+U_{n}^{2}+{U_n^3}.
\end{align*}
Proposition \ref{prop:jumpfiltering} together with Assumptions \ref{ass:hoelder}--  \ref{ass:subpolynom} state that 
\begin{equation}
U_{n}^{1}\stackrel{P}{\to}0,\quad\text{as \ensuremath{n\to\infty}. }\label{eq:lddot3}
\end{equation}
Lemma \ref{lem:Riemann_app} (i) gives {for k=2,3}
\begin{equation}
U_{n}^{k}\stackrel{P}{\to}0,\quad\text{as \ensuremath{n\to\infty}. }\label{eq:lddot4}
\end{equation}
 Combining \eqref{eq:lddot3} and \eqref {eq:lddot4} with consistency of $\hat \t$ and $\bar\t$ we  get
\begin{equation*} 
\int_0^1\frac1{t_n} |\partial^2_{\t}\ell^n_{t_n}(\ts+s(\hat \t_n-\ts))-\partial^2_{\t}\ell_{t_n}(\ts+s(\bar \t_{t_n}-\ts))|ds
\stackrel{P}{\to}0,\end{equation*}
and hence, using \eqref{eq:cv_intlpp}
\begin{equation*} 
\frac1{t_n}\int_0^1 \partial^2_{\t}\ell^n_{t_n}(\ts+s(\hat \t_n-\ts))ds \stackrel{P}{\to} -I(\ts)
\end{equation*}
%
as $n\to\infty$ such that the result follows.
\end{proof}
\begin{proof}[Proof of Theorem \ref{thm:aninfinact}]By replacing in the previous proof Proposition \ref{prop:jumpfiltering} by Proposition \ref {prop:jumpfiltering_infinit}
we obtain the result for the infinite activity case.
\end{proof}

\section{Proofs for jump filtering}\label{sec:proof_filtering}
In this section we prove the results that were used in the Section \ref{sec:jumpfilter} to obtain the convergence of the jump filter (cf. Proposition \ref{prop:jumpfiltering} and \ref{prop:jumpfiltering_infinit}) to integral functionals with respect to the continuous martingale part of $X$. We start by proving the Lemma \ref{lem:filter_error_finite} that shows the convergence of the jump filter approximation to the continuous part in the finite activity case.

 We recall some notations: $\mu$ denotes the Poisson random  measure
on $[0,\infty)\times \mathbb{R}$ associated with the jumps of the L\'evy process $L$, 
the intensity of this jump measure is $ ds \times \nu(dz)$. We define
$\tilde{\mu}=\mu-ds \times \nu(dz)$ as the compensated Poisson measure such that we have $L_t=\int_0^t \int_\mathbb{R} z \mu(ds,dz)$.
In the specific situation where the Lévy process $L$ has a finite intensity $\nu(\mathbb{R})<\infty$, 
we shall denote by $N_t=\int_0^t \int_\mathbb{R}  \mu(ds,dz)$ the process that counts the number of jumps up to time $t$.

\begin{proof}[Proof of Lemma \ref{lem:filter_error_finite}]
	
	For all $n\in\N^*,\ i\in\N^*$ we define the set where increments of $X$ are small:
	\begin{equation}\label{eq:kni}
	K_{n}^{i}=\left\{ |\Delta_{i}^nX|\leq v_n\right\},
	\end{equation}
	the event that $L$ and so also $X$ do not jump:
	\begin{equation}\label{eq:mni}
	M_{n}^{i}=\left\{ \Delta_i^n N=0\right\},
	\end{equation}
	and the event that an increment of the jump part is small:
	\begin{equation}\label{eq:dni}
	D_n^{i}=\left\{\left |\Delta_i^nX^J\right |\leq \frac {v_n}3\right\},
	\end{equation}
	where we denoted by $X^J$  the jump part of $X$ given by 
	\[
	X^J_t=\int_{0}^{t}\int_{\R\setminus\{0\} }\gamma(X_{s-})  z\mu(ds,dz),\quad t\geq 0.
	\]
	We start by proving  (i).
	Using the previously defined sets we introduce the following quantities.
	\begin{align}\label{al:G123}
	& G^{1}_n(\t):=\sum_{i=1}^{n}f(\theta,X_{t_{i-1}})\left(\Delta_i^nX^J\right)1_{K_{n}^{i}\cap\left(M_{n}^{i}\right)^{c}},\\ 
	\label{E:Gn2}
	& G^{2}_n(\t):=\sum_{i=1}^{n}f(\theta,X_{t_{i-1}})\left(\Delta_{i}^nX^{c}\right)1_{\left(K_{n}^{i}\right)^{c}\cap D_{n}^{i}},\\
	\label{E:Gn3}
	&G^{3}_n(\t):=\sum_{i=1}^{n}f(\theta,X_{t_{i-1}})\left(\Delta_{i}^nX^{c}\right)1_{\left(K_{n}^{i}\right)^{c}\cap \left(D_{n}^{i}\right){}^{c}}, 	
	\end{align}
	and decompose the  difference to be estimated as follows:
	\begin{equation}\label{al:decoforun}
	\sum_{i=1}^{n}f(\theta,X_{t_{i-1}})\left(\Delta_{i}^nX^{c}-\Delta_{i}^nX\mathbf{1}_{|\Delta_{i}^nX|\leq v_n}\right)
	=G^{1}_n(\t)+G^{2}_n(\t)+G^{3}_n(\t).
	\end{equation}
	%
	To prove the convergence of $G_n^1(\t)$ we  
	decompose the set $K_n^{i}\cap(M_{n}^{i})^{c}$ into three disjoint events
	
	\begin{align}\label{eq:indicator}
	\mathbf{1}_{K_n^{i}\cap(M_{n}^{i})^{c}} &
	=\mathbf{1}_{\{\Delta^n_i N\geq2\}\cap K_n^i}+\mathbf{1}_{\{\Delta_{i}^nN=1,|\Delta_{i}^nL|\geq 2v_{n}/\gamma_{min}\}\cap K_n^i}\nonumber \\
	& +\mathbf{1}_{\{\Delta_{i}^nN=1,|\Delta_{i}^nL|<2v_{n}/\gamma_{min}\}\cap K_n^i}
	\end{align}
	
	Using  Lemma \ref{lem:jac-prot} (3), the definition of $v_n$ and Markov's inequality we can see that the second indicator of this decomposition is on an event that has small probability. Indeed, for all $p>1,$
	\begin{equation*} 
	P\left(\{\Delta_{i}^nN=1,|\Delta_{i}^nL|\geq 2v_n/\gamma_{min}\}\cap K_{n}^{i}\right)\leq P\left(|\Delta_{i}^nX^{c}|\geq v_n\right)=O(\Delta_{n}^{p/2}v_n^{-p})=O(\Delta_{n}^{\eps p}).
	\end{equation*}
	 Then, using the $L^2$--isometry for stochastic integral with respect to the compensated 
	Poisson measure and the  Jensen's inequality, we get
	\begin{align}
	\label{al:jumpintest}
	& \E \left|\Delta_i^nX^J\right|^2\leq 2\E \left|\int_{t_{i-1}}^{t_i}\int_{\R\setminus \{ 0\}}\gamma(X_{s-})z\tilde{\mu}(ds,dz)\right|^2
	+2\E \left[\int_{t_{i-1}}^{t_i}\int_{\R\setminus \{ 0\}}\gamma(X_{s})zds\nu(dz)\right]^2 
	\nonumber
	\\
	&\leq 2 \int_{t_{i-1}}^{t_i}\int_{\R\setminus \{ 0\}}\E[\gamma^2(X_{s})]z^2ds\nu(dz)
	+ 2 \int_{t_{i-1}}^{t_i}\int_{\R\setminus \{ 0\}}\E[\gamma^2(X_{s})]|z| ds\nu(dz)
	\int_{t_{i-1}}^{t_i}\int_{\R\setminus \{ 0\}}|z| ds\nu(dz)
	\nonumber
	\\
	&
	=O(\Delta_n),
	\end{align}
	where in the last line we have used 
 Assumption \ref{ass:existence}, 
	Assumption \ref{ass:ergodic} (i), Assumption \ref{ass:jumps} (ii) and
	Lemma \ref{lem:ergodic} statement $(3)$.
		 Using Hölder's inequality twice and  Lemma \ref{lem:ergodic} statement $(3)$  we get for all $p>0,$ 
	\begin{equation}
	E\sup_{\t\in\Theta}\left|\sum_{i=1}^{n}f(\theta,X_{t_{i-1}})\Delta_i^nX^J1_{\{\Delta_{i}^nN=1,|\Delta_{i}^nL|\geq 2v_n/\gamma_{\min}\}\cap K_{n}^{i}}\right|
	=O(n\Delta_{n}^{\eps p}). \label{eq:indi1}
	\end{equation}
	For the  third  indicator function in (\ref{eq:indicator}) we observe
	that
	\begin{align}\label{eq:indi2}
	& \E\sup_{\t\in\Theta}|\sum_{i=1}^{n}f(\theta,X_{t_{i-1}})\left(\Delta_i^nX^J\right)1_{\{\Delta_{i}^nN=1,|\Delta_{i}^nL|<2v_n/\gamma_{\text{min}}\}\cap K_{n}^{i}}\nonumber\\
	&\leq\sum_{i=1}^{n}\int_{t_{i-1}}^{t_{i}}\int_{|z|  <  2v_n/\gamma_{\text{min}}}E\left[\sup_{\t\in\Theta}|f(\t,X_{t_{i-1}})\gamma(X_{s})|\right]|z|ds\nu(dz)\nonumber \\
	& =O(n\Delta_{n}\int_{|z|  <  2v_n/\gamma_{\text{min}}}|z|\nu(dz)),
	\end{align}
	where we have used the sub-polynomial growth of  $\gamma$,  $f$ and Lemma \ref{lem:ergodic} statement(3). 
	
	For the  first  indicator in (\ref{eq:indicator}) we obtain by Hölder's inequality
	with conjugated exponents, 
	$p$, $q$, such that $p^{-1}+q^{-1}=1$, and $q^{-1}=1-\eps/2$,
	
	\begin{align}
	& E\sup_{\t\in\Theta}\left|\sum_{i=1}^{n}f(\theta,X_{t_{i-1}})
	\left(\Delta_{i}^nX^{c}-\Delta_{i}^nX\mathbf{1}_{|\Delta_{i}^nX|\leq v_n}\right)1_{\{\Delta_{i}^n N \bac \ge  2 \}\cap K_{n}^{i}}\right|\nonumber \\
	& \leq\sum_{i=1}^{n}E\sup_{\t\in\Theta}|f(\t,X_{t_{i-1}})|\left(|\Delta_{i}^nX^{c}|+v_n\right)1_{\{\Delta_{i}^nN \bac \ge  2\}}\nonumber
	\\
	& 
	\leq\sum_{i=1}^{n} 
	\left( E \sup_{\t\in\Theta} |f(\t,X_{t_{i-1}})|^p 
	(|\Delta_{i}^nX^{c}|+v_n)^p \right)^{1/p} 
	P(\Delta_{i}^n N \ge  2 )^{1/q}
	\nonumber \\
	& =O(n(\Delta_{n}^{1/2}+v_n)\Delta_{n}^{2/q}=O(n\Delta_{n}^{5/2-2\eps}),
	\label{eq:indi3}
	\end{align}
	 where we have used that $P(\Delta_{i}^n N \ge  2)=O(\Delta_n^2)$.
	
	From
	\eqref{eq:indi1}, \eqref{eq:indi2} and \eqref{eq:indi3} it follows
	that
	
	\begin{equation}\label{eq:gn1}
	E\sup_{\t\in\Theta}|G^{1}_n(\t)|\leq  O(n\Delta_{n}\int_{|z|\leq 2v_n/\gamma_{min}}|z|\nu(dz))+ O(n\Delta_{n}^{5/2-2\eps}).
	\end{equation}
	
	To estimate $G_n^2(\t)$ note
	first that for any $p>1,$ 
	\[
	P\left((K_{n}^{i})^{c}\cap D_{n}^{i}\right )\leq P\left (|\Delta_i^n X^c|>2v_n/3 \right )= O(\Delta_n^{\eps p}).
	\]
	Hence, by using H\"older's inequality, sub-polynomial growth of $f$, $(3)$ of Lemma \ref{lem:ergodic} and 
	$(3)$ of Lemma \ref{lem:jac-prot} we obtain for any $p>1,$
	\begin{align}\label{al:gn2}
	&\E\sup_{\t\in\Theta}|G_n^2(\t)|=\E\sup_{\t\in\Theta}|\sum_{i=1}^{n}f(\theta,X_{t_{i-1}})\left(\Delta_{i}^nX^{c}\right)1_{\left(K_{n}^{i}\right)^{c}\cap D_{n}^{i}}|=O(n\Delta_n^{\eps p}).
	\end{align}
	To estimate $G_n^3(\t)$
	note first that
	\begin{align*} 
	&P ((D_{n}^{i})^c)=P(|\Delta_i^nX^J| > v_n/3) \\
	&\leq P(|\int_{t_{i-1}}^{t_i}\int_{|z|\geq v_n}\gamma(X_{s-})z\mu(ds,dz)|
	 >  v_n/6)+P(|\int_{t_{i-1}}^{t_i}\int_{|z|<v_n}\gamma(X_{s-})z\mu(ds,dz)|
	 > v_n/6)
	\\
	&\leq P(|\int_{t_{i-1}}^{t_i}\int_{|z|\geq v_n}\gamma(X_{s-})z\mu(ds,dz)|>0)+\frac{ 6 }{v_n} E[|\int_{t_{i-1}}^{t_i}\int_{|z|<v_n}\gamma(X_{s})zds\nu(dz)|] 
	\\
	&
	 \leq P(\int_{t_{i-1}}^{t_i}\int_{|z|\geq v_n} \mu(ds,dz) \ge 1 )+\frac{ 6}{v_n} E[|\int_{t_{i-1}}^{t_i}\int_{|z|<v_n}\gamma(X_{s})zds\nu(dz)|]
	\\
	&
	 = 1-\exp\left( -\int_{t_{i-1}}^{t_i}\int_{|z|\geq v_n} ds \nu(dz) \right)+\frac{ 6 }{v_n} E[|\int_{t_{i-1}}^{t_i}\int_{|z|<v_n}\gamma(X_{s})zds\nu(dz)|]
	\\
	&
	=O\left( \Delta_n (\int_{|z|\geq v_n}\nu(dz)+\frac{1}{v_n}\int_{|z|<v_n}|z|\nu(dz))\right).
	\end{align*}
	Hence, using Hölder's inequality, the assumptions on $f$ and $(3)$ of Lemma \ref{lem:jac-prot} we obtain for any $q>1$ that
	\begin{align}\label{al:gn3}
	&\E\sup_{\t\in\Theta}|G_n^3(\t)|\leq \E \sum_{i=1}^n \sup_{\t\in\Theta}|f(\t,X_{t_{i-1}}  )  ||\Delta_i^nX^c|1_{\{|\Delta_i^n X|>v_n, (D_n^{i})^c\}}\leq O(n\Delta_n^{1/2})P((D_{n}^{i})^c)^{1/q}\nonumber \\
	&\leq O(n\Delta_n^{1/2})\Delta_n^{1/q} \left( \int_{| z  |\geq v_n}\nu(dz)+\frac 1{v_n}\int_{|z|<v_n}|z|\nu(dz) \right)^{1/q}.
\end{align}
	
	Finally, choosing $q^{-1}=1-\eps/2,$ we get from \eqref{eq:gn1}, \eqref{al:gn2} and \eqref{al:gn3} that
	\begin{align*}
	&\E \left[\sup_{\t\in\Theta} \left| \sum_{i=1}^{n}f(\theta,X_{t_{i-1}})\left(\Delta_{i}^nX^{c}-\Delta_{i}^nX\mathbf{1}_{|\Delta_{i}^nX|\leq v_n}\right)\right|\right] \leq O(n\Delta_{n}\int_{|z|\leq 2v_n/\gamma_{min}}|z|\nu(dz)) \\
	&+ O(n\Delta_{n}^{5/2-2\eps})+O(n\Delta_n^{3/2-\eps/2}) (\int_{|  z  |\geq v_n}\nu(dz)+\frac 1{v_n}\int_{|z|<v_n}|z|\nu(dz))^{1-\eps/2}
	\end{align*}
	In particular, using the definition of $v_n,$ finiteness of $\nu$ and of its first moment we immediately get
	\[
	\E\sup_{\t\in\Theta} \left| \sum_{i=1}^{n}f(\theta,X_{t_{i-1}})\left(\Delta_{i}^nX^{c}-\Delta_{i}^nX\mathbf{1}_{|\Delta_{i}^nX|\leq v_n}\right) \right|=
	O(n\Delta_n^{3/2-  \eps/2 }),
	\]
	hence  (i) is proved. \\
	To prove  (ii) 
	we decompose the approximation by the jump filter as follows:
	\begin{equation}\label{eq:decomp_Ant}
	\sum_{i=1}^{n}f(\theta,X_{t_{i-1}})\left(\Delta_{i}^nX^{c}-\Delta_{i}^nX\mathbf{1}_{|\Delta_{i}^nX|\leq v_n}\right)=
	G^{1}_n(\t)+A^{2}_n(\t)+A^{3}_n(\t),
	\end{equation}
	where $G^{1}_n(\t)$ is given by \eqref{al:G123} and
	\begin{align}\label{al:defAnt}
	& A^{2}_n(\t):=\sum_{i=1}^{n}f(\theta,X_{t_{i-1}})\left(\Delta_{i}^nX^{c}\right)1_{\left(K_{n}^{i}\right)^{c}\cap \left(M_{n}^{i}\right){}^{c}}\\
	\nonumber
	&A^{3}_n(\t):=\sum_{i=1}^{n}f(\theta,X_{t_{i-1}})\left(\Delta_{i}^nX^{c}\right)1_{\left(K_{n}^{i}\right)^{c}\cap M_{n}^{i}}.
	\end{align}
		Observe that
	\begin{align}
	A^{2}_n(\t)  =\sum_{i=1}^{n}f(\theta,X_{t_{i-1}})\Delta_{i}^nX^{c}\mathbf{1}_{(M_{n}^{i})^{c}}-
	\sum_{i=1}^{n}f(\theta,X_{t_{i-1}})\Delta_{i}^nX^{c}\mathbf{1}_{\{|\Delta_{i}^nX|
		 \leq  v_n\}\cap(M_{n}^{i})^{c}}.\label{eq:G2decomp}
	\end{align}
		We first show that after suitable renormalization the first term of this decomposition converges to zero in probability.
	Let $e_{i}:=f(\theta,X_{t_{i-1}})\Delta_{i}^nX^{c}\mathbf{1}_{(M_{n}^{i})^{c}}$.
	Denote ${\mathcal F}_{i}=\sigma\{ (W_s)_{0<s\leq t_i}, (L_s)_{0<s\leq t_i}, X_0\},$ then
	\begin{align*}
	E[e_{i}|\mathcal{F}_{i-1}] & =f(\theta,X_{t_{i-1}})E\left[\int_{t_{i-1}}^{t_{i}}\sigma(X_{s})dW_{s}1_{(M_{n}^{i})^{c}}|\mathcal{F}_{i-1}\right]\\
	& +f(\theta,X_{t_{i-1}})E\left[\int_{t_{i-1}}^{t_{i}}b(\ts  ,X_{s})ds1_{(M_{n}^{i})^{c}}|\mathcal{F}_{i-1}\right]
	\end{align*}
	Observe that $(W_{s})_{s\geq0}$ remains a Brownian motion with respect
	to the filtration that is enlarged by $\sigma(L)$, since $L$ and
	$W$ are independent. Therefore,
	\begin{equation*}
	E\left[\int_{t_{i-1}}^{t_{i}}\sigma(X_{s})dW_{s}1_{(M_{n}^{i})^{c}}|\mathcal{F}_{i-1}\right]=E\left[1_{(M_{n}^{i})^{c}}E\left[\int_{t_{i-1}}^{t_{i}}\sigma(X_{s})dW_{s}|\mathcal{F}_{i-1}\vee
	\sigma(L)\right]|\mathcal{F}_{i-1}\right]=0
	\end{equation*}
	and so
	\begin{align}\label{al:eibound}
	|E[e_{i}|\mathcal{F}_{i-1}]| & \leq|f(\theta,X_{t_{i-1}})|\int_{t_{i-1}}^{t_{i}}E\left[\left|b( \ts  ,X_{s})\right|1_{(M_{n}^{i})^{c}}|\mathcal{F}_{i-1}\right]ds.
	\end{align}
	Recall that
	\begin{equation*}
	P\left((M_n^i)^c\right)=1-P(\Delta_i^nN=0)=O(\Delta_n).
	\end{equation*}
	Using Hölder inequality, Lipshitz continuity of $b(\ts,. )$, the continuity of its Lipshitz constant given by the Assumption \ref{ass:existence} and
	Lemma \ref{lem:jac-prot} (2) we can write
	 for $p$, $q$ such that $p^{-1}+q^{-1}=1$, $p\ge 2$ and $C>0,$
	\begin{align}\label{al: fcm}
	&E\left[\left|b( \ts ,X_{s})\right|1_{(M_{n}^{i})^{c}}|\mathcal{F}_{i-1}\right]\leq \left(E\left[\left|b( \ts  ,X_{s})\right|^p|\mathcal{F}_{i-1}\right]\right)^{1/p}\Delta_n^{1/q}\nonumber\\
	&\leq C \left(  \E[|b( \ts,X_s)-b( \ts,X_{t_{i-1}})|^p|\F_{t_{i-1}}]+|b( \ts,X_{t_{i-1}})|^{ p }\right)^{1/p}    \Delta_n^{1/q}\nonumber\\
	&\leq  C\left(\left( \E\left[|X_s-X_{t_{i-1}}|^p|\F_{t_{i-1}}\right]\right)^{1/p}+
	|b( \ts ,X_{t_{i-1}})|
	\right) \Delta_n^{1/q}\nonumber\\
	&\leq  C 
	\Delta_n^{1/q}\left(\Delta_n^{1/p}(1+|X_{t_{i-1}}|^p)^{1/p}+|b( \ts ,X_{t_{i-1}})|\right).
	\end{align}
	
	Using the fact that $b(\ts ,.)$ and $\sup_{\t\in\Theta} |f(\t,.)|$  are sub-polynomial and
	choosing again $1/q=1-\eps/2,$ (which also guarantees $p>2,$)  we obtain
	\begin{equation}\label{eq:eifinbound}
	|E[e_{i}|\mathcal{F}_{i-1}]|\leq h(|X_{t_{i-1}}|)\Delta_{n}^{2-\eps/2},
	\end{equation}
	where $h$ is a polynomial function.
	Finally
	this implies that under the condition $n\Delta_n^{3-\eps}\to 0,$
	\begin{equation}
	E\left[\sum_{i=1}^{n}\left|E\left[\frac{e_{i}}{\sqrt{n\Delta_{n}}}|\mathcal{F}_{i-1}\right]\right|\right]=O(n^{1/2} \Delta_{n}^{3/2-\eps/2})\to0. \label{eq:cond_GCJ1}
	\end{equation}
	Next, we bound the moment of order two of $e_{i}$.
	
	By Hölder's inequality with $1/q=1-\eps/2$, $1/p=1-1/q$,   we have
	\begin{align}\label{al:ei2}
	E[e_{i}^{2}] & \le E\left[f(\theta,X_{t_{i-1}})^{2p}(\Delta_{i}^nX^{c})^{2p}\right]^{1/p}
	P\left[ (M_{n}^{i})^{c}\right]^{1/q}\\ \nonumber
	&  \le \Delta_n P\left[ (M_{n}^{i})^{c}\right]^{1-\varepsilon/2}=O(\Delta_n^{2-\varepsilon/2}),
	\end{align}
	where in the last line we used again Hölder's inequality, the sub-linear growth of $f$, together with Lemma \ref{lem:jac-prot} (3).
	Hence,
	\begin{equation}
	E\left[
	\left| \sum_{i=1}^{n}E\left[\left(\frac{e_{i}}{\sqrt{n\Delta_{n}}}\right)^{2}|\mathcal{F}_{i}\right]
	\right|
	\right]=
	\sum_{i=1}^{n}E\left[\left(\frac{e_{i}}{\sqrt{n\Delta_{n}}}\right)^{2}\right]= 
	O(\Delta_{n}^{1-\varepsilon/2})\to0.\label{eq:cond_GCJ2}
	\end{equation}
	Under (\ref{eq:cond_GCJ1}) and (\ref{eq:cond_GCJ2}) we obtain from Lemma 9
	in \cite{Genon1993} that
	\begin{equation}\label{eq:G2_decomp1}
	\frac{1}{\sqrt{n\Delta_{n}}}\sum_{i=1}^{n}f(\theta,X_{t_{i-1}})\Delta_{i}^nX^{c}\mathbf{1}_{(M_{n}^{i})^{c}}
	=\sum_{i=1}^{n} \frac{e_{i}}{\sqrt{n\Delta_{n}}}
	\stackrel{P}{\longrightarrow} 0
	\end{equation}
	if $n\Delta_{n}^{3- \varepsilon }\to0$.
	Recall that the second term in the decomposition   (\ref{eq:G2decomp}) of $A_n^{2}$ is given by
		\[
	\sum_{i=1}^{n}f(\theta,X_{t_{i-1}})\Delta_{i}^nX^{c}\mathbf{1}_{K_n^{i}\cap(M_{n}^{i})^{c}}.
	\]
	We will now bound this term in $\LL^1$. 
	 We use again the decomposition \eqref{eq:indicator}
	of $K_n^{i}\cap(M_{n}^{i})^{c}$.
	We find that by computations similar to 
	\eqref{eq:indi3} and \eqref{eq:indi1}
	respectively, we have
	\begin{align}
	\label{eq:lastdisp1}
	&
	E\left|\sum_{i=1}^{n}f(\theta,X_{t_{i-1}})\Delta_{i}^nX^{c}\mathbf{1}_{
		\{\Delta_{i}^nN\geq2\} \cap K_n^i}\right|=O(n\Delta_{n}^{5/2-2\eps}),\\
	\label{eq:lastdisp2}
	&
	E\left|\sum_{i=1}^{n}f(\theta,X_{t_{i-1}})\Delta_{i}^nX^{c}\mathbf{1}_{
		\{\Delta_{i}^nN=1,|\Delta_{i}^nL|\geq 2v_{n}/{\gamma_{min}}\}\cap K_n^i}\right|=O(n\Delta_{n}^{\eps p})
	\end{align}
	Moreover, we have that $P(\Delta^n_i N=1, |\Delta_{i}^nL|<2v_{n}/{\gamma_{min}})
	= P( \int_{t_{i-1}}^{t_i} \int_{|z| < 2v_{n}/{\gamma_{min}}} \mu(ds,dz) = 1)
	\le \Delta_n \int_{|z| < 2v_{n}}\nu(dz)$, where we used  $\gamma_{min}\geq 1$.
	From this, we can easily get
	\begin{equation} \label{eq:lastdisp3}
	E\left|\sum_{i=1}^{n}f(\theta,X_{t_{i-1}})\Delta_{i}^nX^{c}\mathbf{1}_{\{\Delta_{i}^nN=1,|\Delta_{i}^nL|<2v_{n}/{\gamma_{min}}\}\cap K_n^i}\right|=O\left(n\Delta_{n}^{3/2-\eps/2}\left(\int_{|z| < 2v_n}\nu(dz)\right)^{1-\eps/2}\right) .
	\end{equation}
	From \eqref{eq:G2decomp}, \eqref{eq:G2_decomp1}, \eqref{eq:lastdisp1}--\eqref{eq:lastdisp3}, we deduce that if $n\Delta_{n}^{3-\varepsilon}\to0$,

	\begin{equation}\label{eq:an2}
	A_n^{2}(\t)=o_{P}(\sqrt{n\Delta_n})+O_{
		 \LL^1 }\left(n\Delta_n^{5/2-2\eps}+\left( \int_{|z|<2v_n}\nu(dz)
	\right)^{1-\eps/2}
	n\Delta_n^{3/2-\eps/2}\right).
	\end{equation}
	It follows immediately
	from  Lemma \ref{lem:jac-prot} (3)  that for any $p>1,$
	\[  P ((K_n^i)^c\cap M_n^i)\leq  P (|\Delta_i^nX^c|>v_n)=O(\Delta_n^{\eps p}).
	\]
	Hence, using again Hölder's inequality and Lemma \ref{lem:jac-prot} (3) again, we see that for any $p>1,$
	\begin{equation}\label{eq:an3}
	E\left|A_n^{3}(\t)\right|=E\left|\sum_{i=1}^{n}f(\theta,X_{t_{i-1}})\left(\Delta_{i}^nX^{c}-\Delta_{i}^nX\mathbf{1}_{|\Delta_{i}^nX|\leq v_n}\right)1_{\left(K_{n}^{i}\right)^{c}\cap M_{n}^{i}}\right|=O(n\Delta_{n}^{ \varepsilon p}).
	\end{equation}
	Finally, from \eqref{eq:gn1}, \eqref{eq:an2}, \eqref{eq:an3} we obtain that for any $\t\in\Theta,$  if $n\Delta_{n}^{3-\varepsilon}\to0$,
	\begin{align*}
	&\sum_{i=1}^{n}f(\theta,X_{t_{i-1}})\left(\Delta_{i}^nX^{c}-\Delta_{i}^nX\mathbf{1}_{|\Delta_{i}^nX|\leq v_n}\right)=\\
	&o_{P}(\sqrt{n\Delta_n})+O_{\LL^1}\left(n\Delta_n^{5/2-2\eps}+
	\left( \int_{|z|<2v_n}\nu(dz) \right)^{1-\varepsilon/2} 
	n\Delta_n^{3/2-\eps/2}+n\Delta_{n}\int_{|z|\leq 2v_n}|z|\nu(dz)\right).
	\end{align*}
	This proves (ii). 
\end{proof}

\begin{proof}[Proof of Lemma \ref {lem:filter_infiniteactivity}]
	
	We start by proving $(i)$.
	In the infinite jump activity case, the L\'evy process has infinite number of jumps on all compact intervals. Hence, it is impossible to introduce the events that the process had no jump, one jump, or more than two jumps on $(t_{i-1},t_i]$ as it was done in the proof of Lemma \ref{lem:filter_error_finite}.

	Here,  we define the event on which  all the jumps of $L$ are small :
	\begin{equation}\label{eq:Nni}
	N_{n}^{i}=\left\{|\Delta  L_s| \leq 3v_n/\gamma_{min}; \   \forall s\in (t_{i-1},t_i] \right\},
	\end{equation}
	where $\Delta L_s:=L_s-L_{s-}.$
	Using the sets $K_n^{i}$ and $D_n^{i}$ from \eqref{eq:kni}, we define 
	\begin{equation}\label{eq:bn12}
	B^{1}_n(\t):=\sum_{i=1}^{n}f(\theta,X_{t_{i-1}})\left(\Delta_i^nX^J\right)1_{K_{n}^{i}\cap {(N_n^{i})}^c}; \quad B^{2}_n(\t):=
	\sum_{i=1}^{n}  
	f(\theta,X_{t_{i-1}})
	 \left(\Delta_i^nX^J\right)
	1_{K_{n}^{i}\cap N_n^{i}};\\
	\end{equation}
	and decompose the difference as follows
	\begin{equation}\label{eq:decb}
	\sum_{i=1}^{n}f(\theta,X_{t_{i-1}})\left(\Delta_{i}^nX^{c}-\Delta_{i}X\mathbf{1}_{|\Delta_{i}^nX|\leq v_n}\right)=B^{1}_n(\t)+B^{2}_n(\t)
	 +G_n^2(\theta)+ G_n^3(\theta),
		\end{equation}
	 where $G_n^2(\theta)$ and $G_n^3(\theta)$ are defined in \eqref{E:Gn2}--\eqref{E:Gn3}. 
		We start by  studying  the convergence of $B_n^1(\t)$.
	Let $T_i^*\in  (  t_{i-1};t_i]$ such that $|\Delta  L_{T_i^*}  |=\max \left \{|\Delta  L_{s}  |;\ s\in  ( t_{i-1};t_i]\right\}$.
	 Remark that $T_i^*$ is well defined, as from Assumption \ref{ass:jumps} (iii) there is, almost surely, a 
	unique time at which the L\'evy process admits a jump with maximal size. 
 We introduce  the event
	\begin{equation}\label{eq:Ani}
	A_n^{i}=\left\{ \sum_{t_{i-1}<s\leq t_i; s\neq T_i^*}  | \Delta L_s  | \leq \frac{v_n}{
		\gamma_{max} }\right\},
	\end{equation}
	 where $\gamma_{max}$ is defined in Assumption \ref{ass:jumps} (iv).
	
	To estimate  $B_n^1(\t)$ we make the decomposition
	\begin{equation*}
	K_{n}^{i}\cap {(N_n^{i})}^c=K_{n}^{i}\cap {(N_n^{i})}^c\cap A_n^{i} \quad \cup\quad K_{n}^{i}\cap {(N_n^{i})}^c\cap (A_n^{i})^c.
	\end{equation*}
	Note that 
	\begin{align*}
	& K_{n}^{i}\cap {(N_n^{i})}^c\cap A_n^{i}\\ 
	&\subset \{ |\Delta_i^nX^c+\gamma(X_{T_i^*-})\Delta L_{T_i^*}+\sum_{s\neq T^*_i}\Delta X_s |\leq v_n;\quad 
	|\gamma(X_{T_i^*-})\Delta L_{T_i^*}|>3v_n;\quad |\sum_{s\neq T^*_i}\Delta X_s|\leq v_n  \}\\
	&\subset \left\{ | \Delta_i^n X^c|\geq v_n\right\}.
	\end{align*}
	Hence, using $(3)$ from Lemma \ref{lem:jac-prot} we get for all $ p  >1:$
	\begin{equation}\label{eq:knabound}
	P(K_{n}^{i}\cap {(N_n^{i})}^c\cap A_n^{i})\leq P\left(|\Delta_{i}^nX^{c}|\geq v_n\right)=O(\Delta_{n}^{ p  /2}v_n^{- p })=O(\Delta_{n}^{  \varepsilon p }).
	\end{equation}
	Together with \eqref{al:jumpintest}, which is still true in the infinite activity case, Hölder's inequality, sub-polynomial growth  of $f$ and $(3)$ from Lemma \ref{lem:ergodic} this gives for any $p>1$ that
	\begin{equation}\label{eq:Bn11}
	E\sup_{\t\in\Theta}|\sum_{i=1}^{n}f(\theta,X_{t_{i-1}})\left(\Delta_i^nX^J\right)\left (1_{K_{n}^{i}\cap {(N_n^{i})}^c\cap A_n^{i}}\right )|=O( n\Delta_n^{\eps p}). 
	\end{equation}

	Using Hölder inequality, sub-polynomial growth of $f$,  Lemma \ref{lem:ergodic} (3),
	and Lemma \ref{L:law_sum_cond_big}, we get 
	for $1/p+1/q=1$ and some $C>0,$ 
	\begin{align}\label{al:Bn12}
	&\E\sup_{\t\in\Theta}|\sum_{i=1}^nf(\theta,X_{t_{i-1}})\left(\Delta_{i}^nX^{c}-\Delta_{i}^nX\mathbf{1}_{|\Delta_{i}^nX|\leq v_n}\right)1_{K_{n}^{i}\cap {(N_n^{i})}^c\cap (A_n^{i})^c}|\nonumber \\
	& \leq \sum_{i=1}^n\left (\E\sup_{\t\in\Theta}|f(\theta,X_{t_{i-1}})|^p(|\Delta_i^n X^c|+v_n)^p\right )^{1/p}\left (  P({(N_n^{i})}^c\cap (A_n^{i})^c)  \right )^{1/q} \nonumber 
		\\ \nonumber
	&  \leq C n  (\Delta_n^{1/2}+ v_n) \left( \frac{\Delta_n^2}{v_n}\int_{|z|\geq 3 v_n/\gamma_{min}}\nu(dz) \right)^{1/q} 
	\\
	&
	\leq C n v_n^{\varepsilon/2} \Delta_n^{2-\varepsilon}\left (\int_{|z|  \geq 3 v_n /\gamma_{min}
		 }\nu(dz) \right )^{1-\varepsilon/2}, \quad \text {choosing $1/q=1-\varepsilon/2$.}
	\end{align}
		 From \eqref{eq:Bn11} and \eqref{al:Bn12}, we get
	\begin{align}
	E\sup_{\t\in\Theta}|B_n^1(\t)|=o\left(n\Delta_n^{3/2-\eps/2} \left(\int_{|z|\geq v_n / \gamma_{min} }\nu(dz)\right)^{1-\eps/2}\right). \label{eq:Bn1}
	\end{align}
	To estimate $B_n^2(\t)$ we use the bound
	\begin{align*}
	&\sum_{i=1}^nE|f(\theta,X_{t_{i-1}})\int_{t_{i-1}}^{t_i}\int_{\R\setminus \{ 0\}}\gamma(X_{s-})z\mu(ds,dz)|1_{K_{n}^{i}\cap {N_n^{i}}} \\
	& \leq \sum_{i=1}^nE 
	\int_{t_{i-1}}^{t_i}\int_{|z|\leq 3v_n/\gamma_{min} }
	|f(\theta,X_{t_{i-1}})\gamma(X_{s-})z|\mu(ds,dz)
	\\
	& \leq \sum_{i=1}^n
	\int_{t_{i-1}}^{t_i} \int_{|z|\leq 3v_n/\gamma_{min} }
	E [|f(\theta,X_{t_{i-1}})\gamma(X_{s})|]|z|\nu(dz)ds =  O( n \Delta_n \int_{|z|\leq 3v_n/\gamma_{min}}|z|\nu(dz)).
	\end{align*}
	Since $\gamma_{min}\geq 1$, we obtain,
	\begin{equation}\label{eq:Bn2}
	 E\sup_{\t\in\Theta}|B_n^2(\t)|
	=O( n \Delta_n \int_{|z|\leq 3v_n/\gamma_{min} }|z|\nu(dz))\leq O( n \Delta_n \int_{|z|\leq 3v_n }|z|\nu(dz)).
	\end{equation}

	The $\LL^1$ norms of $\sup_{\theta \in \Theta} |G_n^2(\theta)|$ and $\sup_{\theta \in \Theta} |G_n^3(\theta)|$ have been studied in the Lemma \ref{lem:filter_error_finite}, when the L\'evy process has finite activity. 
	However, the proofs of the upper bounds \eqref{al:gn2} and \eqref{al:gn3}, obtained in Lemma \ref{lem:filter_error_finite}, do not use the fact that 
	$\nu(\R)<\infty.$
	
	Finally, collecting  \eqref{al:gn2}, \eqref{al:gn3} with $1/q=1-\varepsilon/2$, \eqref{eq:Bn1}, and \eqref{eq:Bn2} we obtain $(i)$.
	 We continue with the proof of $(ii)$.
	Using the events $K_n^i$ and $ N_n^i$ given by \eqref{eq:kni} and \eqref{eq:Nni} we
	define 
	
	\begin{equation*}
	B^{3}_n(\t):=\sum_{i=1}^{n}f(\theta,X_{t_{i-1}})\left(\Delta_i^nX^c\right)1_{(K_{n}^{i})^c\cap (N_n^{i})^c}; \quad B^{4}_n(\t):=
	\sum_{i=1}^{n}  
	f(\theta,X_{t_{i-1}})
	 \left(\Delta_i^nX^c\right)
	1_{(K_{n}^{i})^c\cap (N_n^{i})};\\
	\end{equation*}
	and decompose the difference as follows
	\begin{equation}\label{eq:decb}	\sum_{i=1}^{n}f(\theta,X_{t_{i-1}})\left(\Delta_{i}^nX^{c}-\Delta_{i}X\mathbf{1}_{|\Delta_{i}^nX|\leq v_n}\right)=B^{1}_n(\t)+B^{2}_n(\t)
	 +B_n^3(\theta)+ B_n^4(\theta),
		\end{equation}
	where $B^{1}_n(\t)$ and $B^{2}_n(\t)$ are given by \eqref{eq:bn12}.
	Using \eqref{eq:Bn11} and \eqref{al:Bn12} we can see that
	\begin{equation}\label{eq:boundbn1}
	E|B_n^1(\t)|=o\left(n\Delta_n^{2-\eps} \left(\int_{|z|\geq 3v_n / \gamma_{min} }\nu(dz)\right)^{1-\eps/2}\right),
	\end{equation}
	while \eqref{eq:Bn2} gives the bound for $E|B_n^2(\theta)|.$
	The role of the event $N_n^{i}$ (all the jumps of $L$ are small)  in the case of the infinite activity is similar to the role of $M_n^{i}$ ($L$ does not jump)  in the finite activity case. Therefore,
	to estimate $B_n^3(\t)$ we use a decomposition similar to \eqref{eq:G2decomp}, where we replace $M_n^{i}$ by $N_n^{i}$ which leads to
		\begin{equation}\label{eq:Bn3dec}
	B_n^3(\t)=\sum_{i=1}^{n}f(\theta,X_{t_{i-1}})\left(\Delta_i^nX^c\right)1_{(N_n^{i})^c}-\sum_{i=1}^{n}f(\theta,X_{t_{i-1}})\left(\Delta_i^nX^c\right)1_{K_{n}^{i}\cap (N_n^{i})^c}.
	\end{equation}
	We will show that the first term of this decomposition goes to zero after suitable normalization.
Let $\tilde e_{i}:=f(\theta,X_{t_{i-1}})\Delta_{i}^nX^{c}\mathbf{1}_{(N_{n}^{i})^{c}}$.
	Recall that
	\begin{align*}
	P\left((N_n^i)^c\right)=1-P(\int_{t_{i-1}}^{t_i}\int_{|z|>3v_n/\gamma_{min}}\mu(ds,dz)=0)=1-e^{-\Delta_n\int_{|z|>3v_n/\gamma_{min}}\nu(dz)}\\ =O\left (\Delta_n\int_{|z|>3v_n/\gamma_{min}}\nu(dz)\right).
	\end{align*}
	Therefore, the same arguments that were used to obtain \eqref{eq:eifinbound} give here
	\begin{equation}\label{eq:tildeeibound}
	|E[\tilde e_{i}|\mathcal{F}_{i-1}]|\leq h(|X_{t_{i-1}}|)\Delta_{n}^{2-\eps/2}\left (\int_{|z|>3v_n/\gamma_{min}}\nu(dz)\right )^{1-\eps/2},
	\end{equation}
	where $h$ is a polynomial function.
	Hence, under the condition $n\Delta_n^{3-\eps}\left (\int_{|z|>3v_n/\gamma_{min}}\nu(dz)\right )^{2-\eps}\to 0,$
	\begin{equation}
	E\left[\sum_{i=1}^{n}\left|E\left[\frac{\tilde e_{i}}{\sqrt{n\Delta_{n}}}|\mathcal{F}_{i-1}\right]\right|\right]=O\left (n^{1/2} \Delta_{n}^{3/2-\eps/2}\left (\int_{|z|>3v_n/\gamma_{min}}\nu(dz)\right )^{1-\eps/2}\right)\to0. \label{eq:like63}
	\end{equation}

	Next, we bound the second moment of  $\tilde e_{i}$. Similarly to \eqref{al:ei2} we obtain

	\begin{equation}
	E[\tilde e_{i}^{2}] \leq \Delta_nP[(N_n^{i})^c]^{1-\eps/2}=O\left(\Delta_n^{2-\varepsilon/2}\left (\int_{|z|>3v_n/\gamma_{min}}\nu(dz)\right )^{1-\eps/2}\right),
	\end{equation}
	Hence, using  $\Delta_n\int_{|z|>3v_n/\gamma_{min}}\nu(dz)\to 0,$ which is implied by  $n\Delta_n^{3-\eps}\left (\int_{|z|>3v_n/\gamma_{min}}\nu(dz)\right )^{2-\eps}\to 0,$ we have
	\begin{align}
	E\left[
	\left| \sum_{i=1}^{n}E\left[\left(\frac{\tilde e_{i}}{\sqrt{n\Delta_{n}}}\right)^{2}|\mathcal{F}_{i}\right]
	\right|
	\right]=
	\sum_{i=1}^{n}E\left[\left(\frac{\tilde e_{i}}{\sqrt{n\Delta_{n}}}\right)^{2}\right] \nonumber\\
	= O\left(\Delta_{n}^{1-\varepsilon/2}\left (\int_{|z|>3v_n/\gamma_{min}}\nu(dz)\right )^{1-\eps/2}\right)\to0.\label{eq:like64}
	\end{align}
	Under (\ref{eq:like63}) and (\ref{eq:like64}) we obtain from Lemma 9
	in \cite{Genon1993} that
	\begin{equation}\label{eq:B3_decomp1}
	\frac{1}{\sqrt{n\Delta_{n}}}\sum_{i=1}^{n}f(\theta,X_{t_{i-1}})\Delta_{i}^nX^{c}\mathbf{1}_{(N_{n}^{i})^{c}}
	=\sum_{i=1}^{n} \frac{\tilde e_{i}}{\sqrt{n\Delta_{n}}}
	\stackrel{P}{\longrightarrow} 0
	\end{equation}
	if $n\Delta_n^{3-\eps}\left (\int_{|z|>3v_n/\gamma_{min}}\nu(dz)\right )^{2-\eps}\to 0.$

Recall that the second term in the decomposition   (\ref{eq:Bn3dec}) of $B_n^3$ is given by
		\[
	\sum_{i=1}^{n}f(\theta,X_{t_{i-1}})\Delta_{i}^nX^{c}\mathbf{1}_{K_n^{i}\cap(N_{n}^{i})^{c}}.
	\]
	We will now bound this term in $\LL^1$. Using the set $A_n^{i}$ defined by \eqref{eq:Ani} we decompose 
	\[\mathbf{1}_{K_n^{i}\cap(N_{n}^{i})^{c}}=\mathbf{1}_{K_n^{i}\cap(N_{n}^{i})^{c}\cap A_n^{i}}+\mathbf{1}_{K_n^{i}\cap(N_{n}^{i})^{c}\cap (A_n^{i})^c}.
	\]
	The first term of this decomposition is bounded in $\LL^1$ using \eqref{eq:knabound}. As a result, for all $p>1,$
		\begin{equation}\label{eq:Bn321}
	E\sup_{\t\in\Theta}|\sum_{i=1}^{n}f(\theta,X_{t_{i-1}})\left(\Delta_i^nX^c\right)\left (1_{K_{n}^{i}\cap {(N_n^{i})}^c\cap A_n^{i}}\right )|=O( n\Delta_n^{\eps p}). 
	\end{equation}
	Then, exactly as in \eqref{al:Bn12}, we get
	\begin{equation}\label{eq:Bn322}
	E|\sum_{i=1}^{n}f(\theta,X_{t_{i-1}})\left(\Delta_i^nX^c\right)\left (1_{K_{n}^{i}\cap {(N_n^{i})}^c\cap (A_n^{i})^c}\right )|=o\left( n\Delta_n^{2-\eps}(\int_{|z| \geq v_n/\gamma_{min}}\nu(dz))^{1-\eps/2}\right).
	\end{equation}
	As a result,
	\begin{equation}\label{eq:Bn3res}
	B_n^3(\t)=o_P(\sqrt{n\Delta_n})+o_{\LL^1}\left(n\Delta_n^{2-\eps}(\int_{|z| \geq v_n/\gamma_{min}}\nu(dz))^{1-\eps/2}\right ).
	\end{equation}
	It remains to estimate the term $B_n^4$ in the decomposition \eqref{eq:decb}.
	Observe that for all $p>1,$
	\begin{align*}	&
	P((K_n^{i})^c\cap N_n^{i})=\nonumber\\
	&P(|\Delta_i^nX^c+\sum_{t_{i-1}< s\leq t_i}\Delta X_s|>v_n; N_n^{i})\leq 
	P(|\Delta_i^nX^c|>\frac{v_n}{2})+P(|\sum_{t_{i-1} < s\leq t_i}\Delta X_s|>\frac{v_n}2;N_n^{i})\leq \nonumber\\
	&
	C\Delta_n^{\eps p}+P(|\int_{t_{i-1}}^{t_i}\gamma(X_{s-})\int_{|z|\leq 3v_n/\gamma_{min}}z\mu(ds,dz)|>\frac{v_n}2)\leq 
	C\Delta_n^{\eps p}+\frac{\Delta_n}{v_n}\int_{|z|\leq 3v_n/\gamma_{min}}|z|\nu(dz),\nonumber
	\end{align*}
	where $C>0.$
	Using Hölder's inequality twice, this last bound, sub-polynomial growth of $f$ and Lemma \ref{lem:jac-prot} (iii) we can easily see that with $1/q=1-\eps/2$ we get
	\begin{align}\label{al:Bn4bound}
	&E|B_n^4(\t)|=\\
	&E|\sum_{i=1}^{n}f(\theta,X_{t_{i-1}})\left(\Delta_i^nX^c\right)1_{(K_{n}^{i})^c\cap N_n^{i}}|\leq 
		\sum_{i=1}^{n}\left (E|f(\theta,X_{t_{i-1}})|^p|\Delta_i^nX^c|^p\right)^{1/p}P^{1/q}(K_{n}^{i})^c\cap N_n^{i})\leq\nonumber\\
		& n\Delta_n^{1/2}\left(\frac{\Delta_n}{v_n}\int_{|z|\leq 3v_n/\gamma_{min}}|z|\nu(dz)\right)^{1-\eps/2}\leq n\Delta_n^{1+\eps/2}\left (\int_{|z|\leq 3v_n/\gamma_{min}}|z|\nu(dz)\right)^{1-\eps/2}.\nonumber
\end{align}
Finally, collecting \eqref{eq:Bn2}, \eqref{eq:boundbn1}, \eqref {eq:Bn3res} and \eqref{al:Bn4bound} we obtain assertion $(ii)$ of the lemma.
%
\end{proof}

\begin{proof}[Proof of Lemma \ref{lem:euler}]
Using $dX_{s}^{c}=b(\ts,X_{s})ds+\sigma(X_{s})dW_{s}$ we decompose the difference as
\begin{equation}\label{eq:decomp}
\int_{0}^{t_{n}}f(\theta,X_{s})\; dX_{s}^{c}-\sum_{i=1}^{n}f(\theta,X_{t_{i-1}})\Delta_{i}^nX^{c}= A_{n,1}(\t)+A_{n,2}(\t)+A_{n,3}(\t),
\end{equation}
where
\begin{align}\label{al:decomp}
   A_{n,1}(\t):=&\sum_{i=1}^{n}\int_{t_{i-1}}^{t_{i}}(f(\theta,X_{s})-f(\theta,X_{t_{i-1}}))\sigma(X_{s})dW_{s},\\
   \nonumber
  A_{n,2}(\t):=&\sum_{i=1}^{n}\int_{t_{i-1}}^{t_{i}}(f(\theta,X_{s})-f(\theta,X_{t_{i-1}}))(b(\ts,X_{s})-b(\ts,X_{t_{i-1}}))ds,\\
  \label{al:decomp_terme3}
 A_{n,3}(\t):=&\sum_{i=1}^{n}\int_{t_{i-1}}^{t_{i}}(f(\theta,X_{s})-f(\theta,X_{t_{i-1}}))b(\ts,X_{t_{i-1}})ds.
  \end{align}
Let us start by proving (ii).
 Let as previously   $\F_t=\sigma\{X_0, W_u,L_u;\ u\leq t\}, t\geq 0.$ 
Using  martingale property  and Itô's isometry of the stochastic integral together with the finite increments formula applied to $f$, we obtain 
\begin{align*}
E[A^2_{n,1}(\t)] & =E\left[\sum_{i=1}^{n}\left(\int_{t_{i-1}}^{t_{i}}(f(\theta,X_{s})-f(\theta,X_{t_{i-1}}))\sigma(X_{s})dW_{s}\right)^{2}\right]\\
 & =E\sum_{i=1}^{n}\int_{t_{i-1}}^{t_{i}}(f(\theta,X_{s})-f(\theta,X_{t_{i-1}}))^{2}\sigma^{2}(X_{s})ds\\
 & \leq\sum_{i=1}^{n}\int_{t_{i-1}}^{t_{i}}E\left[(X_{s}-X_{t_{i-1}})^{2}f'^2(\theta,\tilde x)\sigma^{2}(X_{s})\right]ds,
\end{align*}
  where  $\tilde x$ is a point between $X_s$ and $X_{t_{i-1}}$. Note that $|\tilde x|\leq |X_s|+ |X_{t_{i-1}}|$. Using sub-polynomial growth of $\sigma$ and $\sup_{\t}|f'(\t, .)|,$  Hölder's inequality, (3) of the Lemma \ref{lem:ergodic} and  $(1)$ of the Lemma \ref{lem:jac-prot} yields
\begin{equation}\label{eq:new}
E\left[(X_{s}-X_{t_{i-1}})^{2}f'^2(\theta,\tilde x)\sigma^{2}(X_{s})\right]  \leq C\E[|X_s-X_{t_{i-1}}|^{2q}]^{1/q}\leq C\Delta_n^{1/q},
\end{equation}
where $q>1$ and $C$ is a positive constant.
Hence, for all $\t\in\Theta,$ 
\begin{equation*} 
\E [A^2_{n,1}(\t)]\leq Cn\Delta_{n}^{1+1/q}
 \end{equation*}
  and consequently 
\begin{equation}\label{eq:An1}
\frac1{\sqrt{n\Delta_n}}A_{n,1}(\t)\stackrel{L^2}{\longrightarrow} 0.
\end{equation}

Using Lipshitz continuity of $b$, and the same arguments than for obtaining \eqref{eq:new},  it follows immediately that 
\begin{equation}\label{eq:An22}
\E[\sup_{\t\in\Theta}|A_{n,2}(\t)|]\leq Cn\Delta_n^{1+1/q}
\end{equation}
  Hence,  by choosing $q=1-\eps/2$ such that  $n \Delta_n^{1 + 2/q}= n\Delta_n^{3-\eps} \to 0$ it follows that
 \begin{equation}\label{eq:An2}
\frac1{\sqrt{n\Delta_n}}\sup_{\t\in\Theta}|A_{n,2}(\t)|\stackrel{L^1}{\longrightarrow} 0.
\end{equation}
Observe that by Itô's formula $A_{n,3}(\t)$ can be written as 
\[
A_{n,3}(\t)=a_{n}(\t)+b_{n}(\t)+c_{n}(\t),
\]
where
\begin{align*}
a_n(\t) & =\sum_{i=1}^nb(\ts,X_{t_{i-1}})\int_{t_{i-1}}^{t_i}ds\int_{t_{i-1}}^{s}f'(\theta,X_{u})\sigma(X_u)dW_u,\\
b_n(\t) & =\sum_{i=1}^nb(\ts,X_{t_{i-1}})\int_{t_{i-1}}^{t_i}ds\int_{t_{i-1}}^{s}\left[f'(\t,X_u)b(\ts,X_u)+f''(\theta,X_{u})\frac{1}{2}\sigma^2(X_{u})\right]du,\\
c_n(\t) & =\sum_{i=1}^nb(\ts,X_{t_i-1})\int_{t_{i-1}}^{t_i}ds\sum_{\tau\in[t_{i-1},s]}(f(\theta,X_{\tau})-f(\theta,X_{\tau-})).
\end{align*}
Denote
\[e_i^n:=
\frac{1}{\sqrt{n\Delta_n}}\int_{t_{i-1}}^{t_i} ds\int_{t_{i-1}}^{s}b(\ts,X_{t_{i-1}})f'(\theta,X_{u})\sigma(X_u)dW_u
\]
Using martingale property of the stochastic integral with respect to $W$ we obtain
\[\E\left[{e_i^n}|\mathcal{F}_{t_{i-1}}\right]=0.\]
Using H\"older's inequality and  isometry property of the stochastic integral we get 
 \begin{align*}
\E\left (\E\left[(e_i^n)^2|\mathcal{F}_{t_{i-1}}\right]\right )=\E\left[(e_i^n)^2\right]&\leq \frac{1}n \int_{t_{i-1}}^{t_i}ds\E\left(\int_{t_{i-1}}^{s}b(\ts,X_{t_{i-1}})f'(\theta,X_{u})\sigma(X_u)dW_u\right)^2  \\
&=\frac{1}n\int_{t_{i-1}}^{t_i} ds\int_{t_{i-1}}^{s}\E\left[ b^2(\ts,X_{t_{i-1}})f'^2(\theta,X_{u})\sigma^2(X_u)\right]du 
\leq C\frac {\Delta_n^2}n,
\end{align*} 
where in the last inequality we have used the uniform in $\t$ sub-polynomial growth of $f'$ and $b$, sub-linear growth of $\sigma$  and Lemma \ref{lem:ergodic}(3).
Therefore
\[\E\sum_{i=1}^n\E\left[\left(e_i^n\right)^2|\mathcal{F}_{t_{i-1}}\right]\leq C\Delta_n^2\to 0\quad\text{when}\quad n\to\infty.\]
We conclude, using Lemma 9 in \cite{Genon1993}, that $\forall \t\in\Theta$,
\begin{equation}\label{eq:a}
\frac1{\sqrt{n\Delta_n}}a_n(\t)=\sum_{i=1}^ne_i^n\stackrel{P}{\longrightarrow} 0.
\end{equation}
Using again uniform in $\t$ sub-polynomial growth  of $b, f', f''$, sub-linearity of $\sigma$ and $(3)$ of the Lemma \ref{lem:ergodic}  we easily see that
\begin{equation}\label{eq:b}
\E\sup_{\t\in\Theta}|b_n(\t)|\leq Cn\Delta_n^2.
\end{equation}  
Let us now derive a bound for the jump term $c_n$. 
\begin{align}\label{al:c}
&\E\sup_{\t\in\Theta}|c_n(\t)|\\ & \leq\sum_{i=1}^n\int_{t_{i-1}}^{t_i}ds\int_{t_{i-1}}^{s}du\int_{\mathbb{R}\setminus\{0\}}E|b(\ts,X_{t_{i-1}})||f(\theta,X_{u-}+\gamma(X_{u-})z)-f(\theta,X_{u-})|\mu(du,dz)\nonumber\\
 & \leq\sum_{i=1}^n\int_{t_{i-1}}^{t_i}ds\int_{t_{i-1}}^{s}du\int_{\mathbb{R}\setminus\{0\}}\E|b(\ts,X_{t_{i-1}})f'(\theta,\tilde x)\gamma(X_{u-})||z|
 \nu(dz).\nonumber\\
\end{align}
 where in the second inequality we used again the finite increments formula and  denoted $\tilde x  $ the corresponding  point between $X_{u-}$ and $X_u=X_{u-}+ \gamma(X_{u-})z.$ Note that  again $|\tilde x|\leq |X_{u-}|+|X_u|.$
 According to the Assumptions \ref{ass:ergodic} (i), (iii) and the assumption b) of the Lemma, the functions  $\gamma$,  $b(\ts,.)$ and $ \sup_{\t}|f'(\t, .)|$ are sub-polynomial, 
 and  $\nu(|z|)<\infty. $ Therefore, using  $(3)$ from Lemma \ref{lem:ergodic}
we have
\[\sup_{\t\in\Theta}\int_{\mathbb{R}\setminus\{0\}}\E|b(\ts,X_{t_{i-1}}) f'(\theta,\tilde x)\gamma(X_{u-})||z|
 \nu(dz)<\infty.\]  
This last inequality together with \eqref{al:c} gives
\begin{equation}\label{eq:cc}
 \E\sup_{\t\in\Theta}|c_n(\t)|=O( n\Delta_n^2).
\end{equation}
From \eqref{eq:a}, \eqref{eq:b} and \eqref{eq:cc} we conclude that under condition $n\Delta_n^ {3-\eps}\to 0,$
\begin{equation} \label{E:cv_term_A3}
 \frac1{\sqrt{n\Delta_n}}A_{n,3}(\t)\stackrel{P}{\longrightarrow} 0.
\end{equation} 
Finally, the previous display  together with \eqref{eq:An1} and \eqref{eq:An2} proves (ii) of the lemma. 
To prove the claim (i) we will again use the decomposition of the difference given by
\eqref{eq:decomp}.

Using the same arguments as in \eqref{eq:new} and Lemma \ref{lem:jac-prot} (1), we get for some $p>1$, $C>0$ and $\tilde x$ between $X_s$ and $X_{t_{i-1}}$:
\begin{align*}
&\E\sup_{\t\in\Theta} |A_{n,3}(\t)|\leq
C\sum_{i=1}^n\int_{t_{i-1}}^{t_i}\E\left [|f'(\t,\tilde x)(1+|X_{t_{i-1}}|^p)|\left|X_s-X_{t_{i-1}}\right|\right]ds\leq\nonumber\\
&C\sum_{i=1}^n\int_{t_{i-1}}^{t_i}\E\left(\left|X_s-X_{t_{i-1}}\right|^2\right)^{1/2}\left (\E\left [|f'(\t,\tilde x)|^2(1+|X_{t_{i-1}}|^{2p})\right]\right)^{1/2}ds\leq\nonumber\\
& \sum_{i=1}^n\int_{t_{i-1}}^{t_i}C\Delta_n^{1/2}ds
\leq Cn\Delta_n^{3/2}.
\end{align*}

Hence 
\begin{equation}\label{eq:AAn3}
\frac{1}{n\Delta_n}\sup_{\t\in\Theta}|A_{n,3}(\t)|\stackrel{L^1}{\longrightarrow} 0.
\end{equation}
The bound \eqref{eq:An22} gives
\begin{equation}\label{eq:AAn2}
\frac{1}{n\Delta_n}\sup_{\t\in\Theta}|A_{n,2}(\t)|\stackrel{L^1}{\longrightarrow} 0.
\end{equation}
From \eqref{eq:An1} we know that 
\[\forall \t\in\Theta,\quad \frac 1{n\Delta_n}A_{n,1}(\t)\stackrel{P}{\longrightarrow} 0.\] 
Let us prove that this convergence holds uniformly with respect to $\theta$.
 Denote $\phi:[0,t_n]\to[0,t_n],$ $\phi(s)=t_{i-1}$ if $t_{i-1}\leq s<t_i, i=0,\ldots, n-1,$
 and define
\[M_n(\t):=\frac{1}{t_n}A_{n,1}(\t)=\frac{1}{t_n}\int_{0}^{t_n}(f(\theta,X_{s})-f(\theta,X_{{\phi(s)}}))\sigma(X_{s})dW_{s}.\]
Using Burkholder-Davis-Gundy inequality, Hölder continuity of $f$, sub-polynomial growth of its Hölder constant $K$, sub-linear growth of $\sigma$ and the boundedness  of moments of $X$ given by $(3)$ of Lemma \ref{lem:ergodic} we find that for any $p\geq 2$ and some $C>0,$

\begin{align*}
\E|M_n(\t)-M_n(\t')|^p &\leq
|\t-\t'|^{\kappa p}\frac C{t_n^{p/2}}\E\left(\frac 1{t_n}\int_0^{t_n}\left(K^2(X_s)+K^2(X_{{\phi(s)}})\right)\sigma(X_{s})^2ds\right )^{p/2}\\ 
&\leq|\t-\t'|^{\kappa p}\frac C{t_n^{p/2+1}}\int_0^{t_n}\E\left(K^2(X_s)+K^2(X_{ {\phi(s)}})\right)^{ p/2 }\sigma(X_{s})^{p}ds\leq C|\t-\t'|^{\kappa p}.
\end{align*}

Choosing $p>\frac d{\kappa}$ and  using  the Theorem 20 in the Appendix of \cite{Ibra-Khas} we obtain
\[\frac{1}{n\Delta_n}\sup_{\t\in\Theta}|A_{n,1}(\t)|\stackrel{P}{\longrightarrow} 0\]
 and the statement (i) follows.

\end{proof}

\section{Auxiliary results}\label{sec:auxlemmas}
In this section we gather some auxiliary results that are frequently used in our proofs. Furthermore, we give a proof of the ergodicity results of Lemma \ref{lem:ergodic}. We start by some moment inequalities for jump diffusions and their continuous martingale part.
\begin{lem}\label{lem:jac-prot}
Let $X$ satisfy Assumption \ref{ass:existence}. Then for all $t >s$,
\begin{enumerate}
\item
$\forall p\geq 2,$ 
\[
\E[|X_{t}-X_s|^p]^{1/p}\leq C{|t-s|}^{1/p}.
\]
\item  Let $\F_s=\sigma\{X_u, 0\leq u\leq s\}$. Then for $ p\geq 2,$ $p\in\N$,
\begin{equation*}
\E[|X_t-X_s|^p|\F_s]\leq |t-s|(1+|X_s|^p).
\end{equation*}
\item $\forall p>1,$
\[
\E\left[|X^c_t-X^c_s|^p\right]^{1/p}\leq C|t-s|^{1/2}.
\]

\end{enumerate}

\begin{proof}
The first claim follows easily from the two lemmas  and Theorem 66 on p. 339 in
\cite{Protter}.
The second claim follows from Proposition $3.1$ in
\cite{shimizu2006} and the third from the first two lemmas on p.339 in \cite{Protter}.
\end{proof}
\end{lem}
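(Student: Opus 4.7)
The plan is to establish the three bounds separately using standard moment techniques for Lévy-driven SDEs. Throughout I would work under the tacit additional integrability $\sup_{u\in[s,t]}\E|X_u|^p<\infty$ for every $p$, which follows by a preliminary Gronwall argument from Lipschitz growth of the coefficients together with $\E|X_0|^q<\infty$ (Assumption~\ref{ass:ergodic}(v)); this is standard and I would quote it.

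For claim (3), write $X_t^c - X_s^c = \int_s^t b(\theta,X_u)\,du + \int_s^t \sigma(X_u)\,dW_u$. For $p\ge 2$, Jensen applied to the drift gives $\E|\int_s^t b(\theta,X_u)\,du|^p \le |t-s|^{p-1}\int_s^t \E|b(\theta,X_u)|^p\,du = O(|t-s|^p)$, while the Burkholder-Davis-Gundy inequality followed by Jensen gives $\E|\int_s^t \sigma(X_u)\,dW_u|^p \le C_p|t-s|^{p/2-1}\int_s^t \E|\sigma(X_u)|^p\,du = O(|t-s|^{p/2})$. The Lipschitz hypothesis of Assumption~\ref{ass:existence} controls $\E|b(\theta,X_u)|^p$ and $\E|\sigma(X_u)|^p$ through the moments of $X_u$. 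Taking $p$-th roots yields the $|t-s|^{1/2}$ bound; the case $1<p<2$ is then obtained from $p=2$ by Jensen.

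For claim (1), the new ingredient is the jump contribution $J_{s,t} := \int_s^t\int_{\R}\gamma(X_{u-})z\,\mu(du,dz)$. Using Assumption~\ref{ass:jumps}(ii), I would split $J_{s,t}$ as the integral against the compensated measure plus its deterministic drift compensator, and then apply a Kunita-type moment inequality (equivalently, Theorem~66 in Protter applied to Poisson stochastic integrals) to obtain, for $p\ge 2$,
\[
\E|J_{s,t}|^p \le C_p\Bigl[\int_s^t\!\!\!\int_{\R}\E|\gamma(X_u)z|^p du\,\nu(dz) + \Bigl(\int_s^t\!\!\!\int_{\R}\E|\gamma(X_u)z|^2 du\,\nu(dz)\Bigr)^{p/2} + \Bigl(\int_s^t\!\!\!\int_{\R}\E|\gamma(X_u)z|du\,\nu(dz)\Bigr)^p\Bigr].
\]
Using the sublinear growth of $\gamma$ inherited from Assumption~\ref{ass:existence}, the controlled moments of $X$, and Assumptions~\ref{ass:ergodic}(i) and~\ref{ass:jumps}(ii), each summand is $O(|t-s|)$ for bounded $|t-s|$, hence $\E|J_{s,t}|^p \le C|t-s|$. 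Combined with the $O(|t-s|^{p/2})$ bound from (3) — which is $O(|t-s|)$ for $p\ge 2$ and $|t-s|$ bounded — this gives $\E|X_t-X_s|^p \le C|t-s|$, and the claimed rate $|t-s|^{1/p}$ follows.

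For claim (2), set $\psi(u):=\E[|X_u-X_s|^p\mid\F_s]$ for $u\ge s$, with $p\in\N$, $p\ge 2$. From the SDE, the Lipschitz growth of $b,\sigma,\gamma$, and conditional versions of BDG and the Kunita bound used above, I would derive an inequality of the form
\[
\psi(u) \le C(u-s)(1+|X_s|^p) + C\int_s^u \psi(r)\,dr,
\]
where the linear $|u-s|(1+|X_s|^p)$ term captures the ``one-step'' contribution (linear in $(u-s)$ because of the jump term) and the integral is the iterated Lipschitz contribution. A Gronwall argument then yields the bound. The main obstacle is claim (1): the correct $|t-s|^{1/p}$ scaling requires the Kunita-type estimate above, which in turn forces a careful treatment of both the small jumps (handled by the compensated integral) and the large jumps (handled by the drift compensator and Assumption~\ref{ass:ergodic}(i)); once (1) is established, claim (3) is essentially routine BDG and claim (2) follows by the Gronwall step just sketched.
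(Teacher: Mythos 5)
Your proposal is correct and follows essentially the same route as the paper, whose proof consists entirely of citations to exactly the results you derive by hand: the lemmas and Theorem~66 on p.~339 of Protter are the Kunita/Bichteler--Jacod-type moment bounds for Lévy stochastic integrals that you use for claims (1) and (3), and Proposition~3.1 of Shimizu--Yoshida is the conditional Gronwall estimate you sketch for claim (2). The only caveat, shared with the paper, is that the stated bounds should be read for $|t-s|$ bounded (which is how they are used, with $t-s\le\Delta_n$), and that the uniform-in-time moment bound $\sup_u \E|X_u|^p<\infty$ comes from the Lyapunov argument of Lemma~\ref{lem:ergodic}(3) rather than from Gronwall alone.
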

\begin{lem}
	\label{L:law_sum_cond_big}
	Under assumptions \ref{ass:existence} to \ref{ass:jumps}, we have for some $C>0,$
	\begin{equation*}
	P((N_n^i)^c \cap (A_n^i)^c) \le C \frac{\Delta_n^2}{v_n/\gamma_{min}} 
	\int_{|z|\geq 3 v_n} \nu(dz).
	\end{equation*}
\end{lem}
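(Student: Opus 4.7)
My plan is to exploit the independence of the big-jump and small-jump components of $L$, and then treat separately the cases where exactly one, or at least two, big jumps of $L$ occur on $(t_{i-1},t_i]$.

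First I decompose $L=L^B+L^S$, where $L^B$ collects the jumps of $L$ with $|z|\geq 3v_n/\gamma_{min}$ and $L^S$ the rest; Assumption \ref{ass:jumps}(iii) guarantees that no jump lies exactly at the threshold a.s., so the decomposition is well defined and $L^B,L^S$ are independent L\'evy processes. Write $\nu_B:=\int_{|z|\geq 3v_n/\gamma_{min}}\nu(dz)$, let $N^B_i$ be the number of jumps of $L^B$ on $(t_{i-1},t_i]$ (Poisson with parameter $\Delta_n\nu_B$), and set $V^S_i:=\sum_{t_{i-1}<s\leq t_i}|\Delta L^S_s|$. The finiteness of $\int |z|\nu(dz)$, which follows from Assumptions \ref{ass:ergodic}(i) and \ref{ass:jumps}(ii), ensures that $V^S_i$ is a.s.\ finite with $E[V^S_i]=\Delta_n\int_{|z|<3v_n/\gamma_{min}}|z|\nu(dz)=O(\Delta_n)$.

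Since every jump of $L^S$ is strictly smaller in absolute value than every jump of $L^B$, on $(N_n^i)^c$ the maximal jump time $T_i^*$ of $L$ is necessarily a jump time of $L^B$. A case distinction then arises: if $N^B_i\geq 2$, then $\sum_{s\neq T_i^*}|\Delta L_s|$ contains a further big jump of size $\geq 3v_n/\gamma_{min}\geq v_n/\gamma_{max}$ (using $\gamma_{min}\leq \gamma_{max}$), so $(A_n^i)^c$ holds automatically; whereas if $N^B_i=1$, then $\sum_{s\neq T_i^*}|\Delta L_s|=V^S_i$, and $(A_n^i)^c$ reduces to $\{V^S_i>v_n/\gamma_{max}\}$, an event depending only on $L^S$ and hence independent of $\{N^B_i=1\}$. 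I therefore obtain
\begin{equation*}
P\bigl((N_n^i)^c\cap(A_n^i)^c\bigr) \;\leq\; P(N^B_i=1)\,P(V^S_i>v_n/\gamma_{max}) \;+\; P(N^B_i\geq 2).
\end{equation*}

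To finish, I bound $P(N^B_i=1)\leq \Delta_n\nu_B$, $P(N^B_i\geq 2)\leq (\Delta_n\nu_B)^2/2$, and $P(V^S_i>v_n/\gamma_{max})\leq (\gamma_{max}/v_n)\,E[V^S_i]=O(\Delta_n/v_n)$ by Markov's inequality, so the first summand is already $O(\Delta_n^2\nu_B/v_n)$. The delicate step is controlling the quadratic factor $\nu_B^2$ in the second summand; I would apply Markov's inequality to $\nu$ itself to obtain $\nu_B\leq \frac{\gamma_{min}}{3v_n}\int_{|z|\geq 3v_n/\gamma_{min}}|z|\nu(dz)=O(\gamma_{min}/v_n)$, again by the finite first moment of $\nu$. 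This upgrades $(\Delta_n\nu_B)^2$ to $O(\Delta_n^2\gamma_{min}\nu_B/v_n)$, matching the first summand up to a finite constant. Absorbing $\gamma_{max}$ (finite by Assumption \ref{ass:jumps}(iv)) and $\int|z|\nu(dz)$ into $C$ delivers the asserted estimate.
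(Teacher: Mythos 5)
Your proof is correct, and it reaches the same bound as the paper's own argument, namely $C\,\Delta_n^2\, v_n^{-1}\int_{|z|\ge 3v_n/\gamma_{min}}\nu(dz)$ (the lemma as literally printed has $\gamma_{min}$ and the integration domain placed slightly differently, but the paper's own proof and the place where the lemma is invoked, in \eqref{al:Bn12}, both use exactly the form you obtain, so you are proving the right statement). Your route is genuinely different from the paper's. The paper reparametrizes the jump measure on $(t_{i-1},t_i]$ as a Poisson process $(U_z)_{z\ge 0}$ indexed by inverse jump size, identifies $1/Z_1^*$ (with $Z_1^*$ the first jump time of $U$) as the maximal jump, and applies Markov's inequality to the conditional expectation of $\int_{(Z_1^*,\infty)}z^{-1}\,dU_z=\sum_{s\neq T_i^*}|\Delta L_s|$ given $\mathcal{G}_{Z_1^*}$; this treats all configurations of big jumps in one stroke but requires the strong Markov property of $U$ and its explicit intensity, which is where Assumption \ref{ass:jumps}(iii) enters. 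You instead split $L=L^B+L^S$ at the threshold $3v_n/\gamma_{min}$ and exploit the independence of the two components, reducing the event to $\{N^B_i=1\}\cap\{V^S_i>v_n/\gamma_{max}\}$ plus the remainder $\{N^B_i\ge 2\}$; the only extra ingredient you need is the Chebyshev-type bound $\nu_B\le \tfrac{\gamma_{min}}{3v_n}\int|z|\,\nu(dz)$ to convert $(\Delta_n\nu_B)^2$ into $O(\Delta_n^2\nu_B/v_n)$, and that step is valid under the standing hypothesis $\int_{\R}|z|\,\nu(dz)<\infty$. Your argument is more elementary (independence of the restricted Poisson random measures plus two applications of Markov), at the cost of the explicit case split on $N^B_i$; the paper's argument is slicker in that the "sum of the non-maximal jumps" is bounded uniformly over the number of big jumps without any case distinction. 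One cosmetic remark: absolute continuity of $\nu$ is not needed for the decomposition $L=L^B+L^S$ itself, but it is what guarantees a.s.\ uniqueness of the maximal jump (so $T_i^*$ is well defined) and the a.s.\ identification $(N_n^i)^c=\{N^B_i\ge 1\}$; you invoke it in the right place.
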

\begin{proof}
	We need to introduce some notations. For $z>0$, we define
	$U_z=\int_{t_{i-1}}^{t_i} \int_{|y|\geq 1/z} \mu(ds,dy)$ the number of jumps of $(X_s),\  s\in (t_{i-1},t_i],$ with a size 
	greater than $1/z$, and we set $U_0=0$. It is clear that $(U_z)_{z\ge 0}$ is a process 
	whose increments are independent and distributed with Poisson laws. Hence, it is a Poisson process, and by a simple computation we can show that it has a 
	jump intensity equal to
	$(t_i-t_{i-1}) z^{-2} (\nu(z^{-1})+\nu(-z^{-1}))$, where $\nu(z)=\nu(dz)/dz$ exists by
	Assumption \ref{ass:jumps} (iii).    	
	
	We define the filtration generated by the process $(U_z)_{z\ge 0}$, by setting for all $z\ge 0$, 
	${\mathcal G}_z=\sigma\{U_y;\  y \leq z\}$. We note $Z_1^*$
	the first jump time of the process $U$, which is a stopping time.
	By construction, we have that $1/Z_1^*$ is the size of the biggest jumps of the L\'evy process
	$L$ on $(t_{i-1},t_i]$, or with the notations of Lemma \ref{lem:filter_infiniteactivity} that, $1/Z_1^* = | \Delta L_{T_i^*}  |$, where $|\Delta L_{T_i^*} |=\max \left \{|\Delta  L_{s} |;\ s\in  ( t_{i-1};t_i]\right\}$.
	
	Moreover, we can write
	\begin{equation*}
	\sum_{t_{i-1}<s\leq t_i; s\neq T_i^*}|\Delta L_s|
	=\int_{t_{i-1}}^{t_i} \int_{|y| < 1/Z_1^*} |y| \mu(ds,dy)
	= \int_{(Z_1^*,\infty)} \frac{1}{z} d U_z,	
	\end{equation*}
	where we have used that $\Delta L_{T_i^*}$ is the only jump with the maximal size $1/Z_1^*$.
	%
	Hence, we have
	\begin{align*}
	&  P({(N_n^{i})}^c\cap (A_n^{i})^c) = P\left (|\Delta L_{T_i^*}|>\frac {3 v_n}{\gamma_{min}};\quad  \sum_{t_{i-1}<s\leq t_i; s\neq T_i^*}|\Delta L_s| > \frac{v_n}{\gamma_{max}}\right )\\
	&=P\left( (Z_1^*)^{-1}>\frac {3 v_n}{\gamma_{min}}; \quad
	\int_{(Z_1^*,\infty)} z^{-1} d U_z > \frac{v_n}{\gamma_{max}}\right)\\
	&=\E\left [ 1_{\{(Z_1^*)^{-1}>\frac {3 v_n}{\gamma_{min}}\}} 
	P\left(  
	\int_{(Z_1^*,\infty)} z^{-1} d U_z > \frac{v_n}{\gamma_{max}} \mid \mathcal{G}_{Z^*_1}\right)\right]\\
	&\leq \frac{\gamma_{max}}{v_n}  \E\left[ 1_{\{(Z_1^*)^{-1}>\frac {3 v_n}{\gamma_{min}}\}} 
	E\left(  
	\int_{(Z_1^*,\infty)} z^{-1} d U_z  \mid  \mathcal{G}_{Z^*_1}\right)\right],
	\end{align*}
	where we have used the Markov inequality in the last line.
	Using now that $(U_z)_{z \ge 0}$ is a Poisson process with an explicit jump intensity
	$\overline{U}(z):= (t_{i} - t_{i-1}) z^{-2} (\nu(z^{-1}) + \nu(-z^{-1}))$, we deduce,
	\begin{equation*}
	P({(N_n^{i})}^c\cap (A_n^{i})^c)
	\le \frac{\gamma_{max}}{v_n} 
	\E\left[ 1_{\{(Z_1^*)^{-1}>\frac {3 v_n}{\gamma_{min}}\}} 
	E\left(  
	\int_{(Z_1^*,\infty)} z^{-1} \overline{U}(z) dz  \mid  \mathcal{G}_{Z^*_1}\right)\right].
	\end{equation*}
	But, by a simple change of variable, $ \int_{(Z_1^*,\infty)} z^{-1} \overline{U}(z) dz = (t_{i} - t_{i-1}) \int_{|y| < 1/ Z_1^*} |y| \nu (y)  dy \leq \Delta_n \int_{\mathbb{R}} |y| \nu(y) \dd y$. 
	We conclude
	\begin{align*}
	P({(N_n^{i})}^c\cap (A_n^{i})^c)
	&\le \frac{\gamma_{max}}{v_n} \Delta_n \left( \int_{\mathbb{R}} |y| \nu(y) \dd y \right)
	P \left[ (Z_1^*)^{-1}>\frac {3 v_n}{\gamma_{min}} \right]
	\\
	&\le C \frac{\Delta_n}{v_n} P\left(\mu((t_{i-1},t_i] \times [ (-\infty,-\frac{3 v_n}{\gamma_{min}}) \cup (\frac{3 v_n}{\gamma_{min}},+\infty) ] ) \geq 1\right)\\
	&\leq C \frac{\Delta_n^2}{v_n}\int_{|z|>\frac{3v_n}{\gamma_{min}}}\nu(dz),
	\end{align*}
	where $C>0.$ The lemma is proved.
\end{proof}

\begin{prop}\label{prop:ident}
	Under Assumptions \ref{ass:existence} to \ref{ass:jumps}, the Assumption \ref{ass:ident} is equivalent to the condition
	\begin{equation*}
	\forall (\t,\t')\in\Theta^2,\quad \text{such that }\quad \t\neq \t', \quad b(\t,.)\neq b(\t',.).
	\end{equation*}
\end{prop}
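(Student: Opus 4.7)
The plan is to prove both implications of the equivalence directly. The forward direction ($\Rightarrow$) is elementary: if there existed $\theta \neq \theta'$ in $\Theta$ with $b(\theta,\cdot) \equiv b(\theta',\cdot)$ on $\R$, then the integrand $(b(\theta,x)-b(\theta',x))^2/\sigma^2(x)$ would vanish identically and the integral of Assumption \ref{ass:ident} would equal zero, contradicting strict positivity. Hence Assumption \ref{ass:ident} already forces $b(\theta,\cdot) \neq b(\theta',\cdot)$ whenever $\theta \neq \theta'$.

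The substantive direction is $\Leftarrow$. Assume \eqref{eq:idb} and fix $\theta \neq \theta'$; then there exists $x_0 \in \R$ with $b(\theta,x_0) \neq b(\theta',x_0)$. Since $b$ is Lipschitz in $x$ (Assumption \ref{ass:existence}), the map $x \mapsto b(\theta,x) - b(\theta',x)$ is continuous, so I can choose an open ball $U$ around $x_0$ and a constant $c>0$ with $|b(\theta,x)-b(\theta',x)| \geq c$ for every $x \in U$. Using Assumption \ref{ass:girsanov} ($\sigma^2 \geq \alpha$) together with the Lipschitz continuity of $\sigma$ to bound $\sigma^2$ from above on the compact closure $\overline{U}$, the desired integral is bounded below by
\[
\int_{\R} \frac{(b(\theta,x) - b(\theta',x))^2}{\sigma^2(x)} \, d\pi^\theta(x) \geq \frac{c^2}{\sup_{x \in \overline{U}} \sigma^2(x)} \, \pi^\theta(U).
\]
This reduces the problem to showing $\pi^\theta(U) > 0$ for every nonempty open $U \subset \R$, i.e., that $\pi^\theta$ has full topological support.

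The main obstacle is precisely this full-support property of $\pi^\theta$. The key input is Assumption \ref{ass:recurrence}, whose second half is designed exactly so that the Markov process $X^\theta$ is irreducible in the sense used by Masuda \cite{Mas}: the support condition on $\gamma(x)z$ for $z \in \mathrm{supp}(\nu)$ combined with the existence of a bounded transition density ensures that the process can reach any open set from any starting point with positive probability. Coupled with Assumptions \ref{ass:existence}--\ref{ass:ergodic}, this irreducibility yields Harris recurrence and hence the uniqueness of the invariant measure $\pi^\theta$ of Lemma \ref{lem:ergodic}, with Lebesgue measure as an irreducibility measure; integrating the positive transition density against $\pi^\theta$ (via the invariance identity $\pi^\theta(U) = \int p_t^\theta(x,U)\,\pi^\theta(dx)$) then gives $\pi^\theta(U) > 0$ for every nonempty open $U$. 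Plugging this into the previous display completes the reverse implication.
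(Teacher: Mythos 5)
Your proof is correct and follows essentially the same route as the paper: both reduce the reverse implication to showing that $\pi^\theta$ charges every nonempty open set, and both obtain this from the everywhere-positivity of the transition probabilities $P(X^\theta_\Delta\in U\mid X^\theta_0=x)$ (Masuda's irreducibility result, which is exactly what Assumption \ref{ass:recurrence} is designed to deliver) combined with the invariance identity $\pi^\theta(U)=\int P(X^\theta_\Delta\in U\mid X^\theta_0=x)\,d\pi^\theta(x)$. The only difference is that you spell out the elementary reduction (continuity of $b(\theta,\cdot)-b(\theta',\cdot)$, the lower bound on a ball, the upper bound on $\sigma^2$ over its compact closure) that the paper leaves implicit.
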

\begin{proof}
	It is sufficient to show that if $\mathcal{O}$ is some non empty, open set, then 
	$\pi^\theta(\mathcal{O})>0$.   It is proved in \cite{Mas} (see equation (13) p.43) that for all $\Delta>0$, $x\in \mathbb{R}$, and $\mathcal{O}$ non empty, open set,
	$P (X^\theta_\Delta \in \mathcal{O}\mid X^\theta_0=x )>0$.
	From this, we deduce that  
	\begin{equation*}
	\pi^\theta(\mathcal{O})=\int_{\mathbb{R}} P (X^\theta_\Delta \in \mathcal{O}\mid X^\theta_0=x ) d \pi^\theta(x) >0.
	\end{equation*}
\end{proof}

We conclude this section with a proof of the ergodicity results and moment bounds of Lemma \ref{lem:ergodic}. The proof is based on \cite{Mas}.

\begin{proof}[Proof of Lemma \ref{lem:ergodic}]
Let $q>2,$ $q$ even  and  
$f^{\star}(x)=|x|^q.$ 
We show that
$f^{\star}$ satisfies the drift condition 
\[\A f^{\star}\leq -c_1f^{\star}+c_2,\]
where $c_1>0,c_2>0.$
Denote
\[\G f(x)=\frac 12 \sigma^2(x)f''(x)+b(\t,x)f'(x),\]
\[\J f(x)=\int_{\R}(f(x+z\gamma(x))-f(x))\nu (dz).\]
 for any $f$ such that the two previous expressions are defined and decompose 

\[
\A=\G+\J.
\]

Using Taylor's formula together with  Assumptions \ref{ass:ergodic} (iii) and \ref{ass:jumps} (ii) we can write  
\[|\J f^{\star}(x)|\leq \int_{\R}|z\gamma(x)|\sup_{u\in[x,x+z\gamma(x)]}|f^{{\star}'}(u)|\nu(dz)\leq C\gamma(x)|x|^{q-1}\int_{\R}|z|(1+|z|)^{q-1}\nu(dz) =o(|x|^q)\]
  as $x\to\infty.$
Using Assumption \ref{ass:ergodic} (ii) and (iv)  we get 
\[\G f^{\star}(x)=\frac 12\sigma^2(x)q(q-1)x^{q-2}+b(\t,x)x qx^{q-2}\leq -C|x|^2qx^{q-2}+o(|x|^q)\leq -C qf^{\star}(x)+o(|x|^q),\]
for some $C>0.$
As $\A f^{\star}(x)$ is locally bounded, using two previous displays we can choose $c_2>0$ and $c_1>0$ such that for all  $x\in\R,$
\[\A f^{\star}(x)\leq -c_1 f^{\star}(x)+c_2.\]
Hence, Assumption $3^{\star}$ from \cite{Mas} holds and  using Theorem $2.2$ from \cite{Mas} we get then 
\begin{equation}\label{eq:born}
\sup_{s\geq 0} \EE[|X_s^{\t}|^q]<\infty
\end{equation}
and using Fatou's lemma results in
\[\sup_{s\geq 0} \EE[|X_{s_-}^{\t}|^q]<\infty.\]
Hence we proved the assertion (3).  
Using Assumption \ref{ass:recurrence} and the Theorem $2.1$ from \cite{Mas} we get for all $\t\in\Theta$ that $X^{\t}$ admits the unique invariant distribution $\pi^{\theta}$, $f^{\star}\in\L^1(\pi^{\theta})$ and
the ergodic theorem holds. We proved (1) and (2). We continue with the proof of (4).
Using ergodic theorem, for all $q>0,$ 
\[\lim_{t\to\infty}\frac 1t\int_0^{\infty}|X_s^{\t}|^qds=\pi^{\t}(|x|^q), \ \  P-a.s.\]
Moreover, using Jensen's inequality and the bound \eqref{eq:born} we
  get the uniform integrability of the family $\{\frac 1t\int_0^t|X_s^{\t}|^qds,\ t>0\}$:
  \[E\left (\frac 1t\int_0^t|X_s^{\t}|^qds\right )^{1+\eps}\leq \frac 1t\int_0^t[E|X_s^{\t}|^{q(1+\eps)}]ds\leq C,\]
where $C>0,$ and hence 
\[\lim_{t\to\infty}\frac 1t\int_0^{t}E|X_s^{\t}|^{q}ds=\pi^{\t}(|x|^q).\]
\end{proof}

\begin{proof}[Proof of Lemma \ref {lem:Riemann_app}]
Let us first prove (i).  
Using Lemma \ref{lem:jac-prot} $(1)$, with some $\tilde x$ between $X_{t_{i-1}}$ and $X_s$ in the third line below  we obtain:
\begin{align*}
&E\sup_{\theta\in\Theta}\left|\int_{0}^{t_{n}}f(\theta,X_{s})\; ds-\sum_{i=1}^{n}f(\theta,X_{t_{i-1}})\Delta_{i}Id\right| =E\sup_{\theta\in\Theta}\left|\sum_{i=1}^{n}\int_{t_{i-1}}^{t_{i}}f(\theta,X_{s})-f(\theta,X_{t_{i-1}})\; ds\right|\\
&  \leq\sum_{i=1}^{n}\int_{t_{i-1}}^{t_{i}}E\left[\sup_{\theta\in\Theta}\left|f(\theta,X_{s})-f(\theta,X_{t_{i-1}})\right|\right]ds
  \leq \sum_{i=1}^{n}\int_{t_{i-1}}^{t_{i}}E\left[\sup_{\theta\in\Theta}\left |f'(\t,\tilde x )\right|\left|X_{s}-X_{t_{i-1}}\right|\right]\; ds \\
  &\leq \sum_{i=1}^{n}\int_{t_{i-1}}^{t_{i}}\left(E\sup_{\theta\in\Theta}|f'(\t, \tilde x )|^2\right)^{1/2}\left(E |X_{s}-X_{t_{i-1}}|^2\right)^{1/2}\; ds
  \leq C n\Delta_{n}^{3/2}.
\end{align*}

We now prove (ii). We find that
\begin{equation*}\int_{0}^{t_{n}}f(\theta,X_{s})\; ds-\sum_{i=1}^{n}f(\theta,X_{t_{i-1}})\Delta_{i}^nId=\sum_{i=1}^n\int_{t_{i-1}}^{t_i}\left (f(\theta,X_{s})-f(\theta,X_{t_{i-1}})\right )\;ds ,
\end{equation*}
and it is then apparent that this term can be treated exactly as the term $A_{n,3}(\theta)$ given by the equation \eqref{al:decomp_terme3}.
Hence, from \eqref{E:cv_term_A3} (which requires the condition $n\Delta_n^{3-\eps}\to 0$) we have the result.
\end{proof}

\bibliographystyle{plainnat}
\bibliography{bibliography}

\end{document}